\documentclass[preprint,10pt,3p]{elsarticle}
\biboptions{sort&compress}

\usepackage{graphicx}

\usepackage{mathtools,amscd,amssymb,amsfonts,amsthm, amsmath}

\usepackage[utf8]{inputenc}
\usepackage{geometry}
\usepackage{longtable}
\usepackage{float}
\usepackage{esvect}
\usepackage{hyperref}
\usepackage{xcolor}
\usepackage{bm}
\usepackage{booktabs, multirow, caption}
\usepackage{subfig}

\usepackage[active]{srcltx}
\usepackage{cleveref}
\usepackage{kotex}
\usepackage{mathrsfs}
\usepackage{cancel}

\usepackage{autonum}
\usepackage{algorithm}

\usepackage{algpseudocode}

\usepackage{pgfplots}
\usepackage{tikz}
\usetikzlibrary{arrows.meta,positioning,fit,calc,shapes.misc}

\newtheorem{theorem}{Theorem}[section]
\newtheorem{lemma}{Lemma}[section]

\newtheorem{corollary}{Corollary}[section]

\theoremstyle{remark}
\newtheorem{remark}{Remark}[section]

\DeclareMathAlphabet\mathbfcal{OMS}{cmsy}{b}{n}

\journal{Journal of Applied Mathematics and Computing}

\begin{document}

\begin{frontmatter}


\title{Fast and Scalable Caputo Fractional Gradient Descent via Perturbation-Preserving Memory Compression}

\author[label1]{Hwanseo Lee}
\author[label1]{Junseo Lee}
\author[label1]{Hyunju Kim\corref{cor1}}
\address[label1]{Department of Energy Engineering, Korea Institute of Energy Technology, Naju 58330, Republic of Korea}
\ead{hjkim@kentech.ac.kr}


\cortext[cor1]{Corresponding author. Tel:+82-61-320-9240 }

\date{\today}

\begin{abstract}
Fractional gradient descent (FGD) incorporates long-range memory through Caputo-type operators and has been shown to improve stability in ill-conditioned and nonconvex optimization problems. Despite these advantages, its practical use remains limited, mainly due to the high computational cost of evaluating history-dependent convolutions, which scales quadratically with the number of iterations.

In this paper, we focus on making Caputo-based optimization computationally viable without sacrificing its intrinsic memory structure. We begin by expressing the fractional descent direction as a discrete convolution over past gradients, which provides a unified view of the method. Based on this formulation, we introduce two complementary mechanisms to reduce the cost of the memory term. The first uses a sum-of-exponentials (SOE) approximation of the power-law kernel, leading to efficient recursive updates. The second approach, newly proposed in this paper as dyadic hierarchical discrete convolution (DHDC), compresses the gradient history through a multiscale aggregation strategy.

Rather than treating these approximations as purely numerical accelerations, we interpret them as perturbations of the ideal Caputo operator. This viewpoint allows us to analyze how the compressed memory affects the optimization dynamics. Under standard $\mu$-strong convexity and $L$-smoothness assumptions, we show that the resulting method still exhibits monotone descent and linear convergence, provided that the approximation error remains controlled.

We validate the proposed approach on several representative tasks, including battery state estimation, stochastic signal reconstruction, and image classification on CIFAR-10. Across these settings, the method consistently demonstrates improved stability compared to classical gradient descent and better robustness than truncated fractional schemes, while significantly reducing computational and memory costs.

To further investigate the optimization behavior in highly multimodal landscapes, we additionally study the Rastrigin benchmark problem. The experiments demonstrate that the proposed DHDC method maintains persistent but bounded oscillatory dynamics induced by compressed fractional memory, enabling the optimizer to repeatedly escape shallow local minima while remaining stable near the global minimizer. We further analyze the oscillation magnitude with respect to the fractional order and empirically verify the computational complexity of the proposed methods.

These results suggest that fractional-order optimization can be made scalable in practice, without losing the essential benefits of long-memory dynamics.
\end{abstract}

\begin{keyword}
Fractional gradient descent
\sep Caputo fractional derivative
\sep Fast-memory approximation
\sep Dyadic hierarchical convolution
\sep Long-memory optimization
\end{keyword}

\end{frontmatter}

\section{Introduction}
\label{sec:intro}

Fractional calculus extends classical differentiation and integration to non-integer orders, providing a natural framework for modeling systems with memory and nonlocal interactions. 
Since its formal development through the works of Liouville, Riemann, and Caputo \cite{Podlubny1999,Kilbas2006,Oldham1974,Caputo1967}, it has become an essential tool in the analysis of processes where past states influence present behavior. 
In modern scientific computing, fractional operators are particularly valued for their ability to capture long-range temporal and spatial dependencies, which are difficult to represent using classical integer-order models \cite{Metzler2000,Mainardi2010,Diethelm2010,Meerschaert2012}. 
Among the various formulations, the Caputo derivative is especially well suited for physical modeling, as it accommodates classical initial conditions and yields interpretable constitutive laws in viscoelasticity, anomalous diffusion, and other hereditary systems \cite{Caputo1967,Bagley1983,Podlubny1999,Mainardi2010,Diethelm2010}.

The nonlocal memory inherent in fractional calculus has naturally motivated its extension to optimization problems \cite{LiZeng2015,Wang2017FractionalBP,Pu2015Fractional}. 
In classical gradient-based methods, parameter updates depend solely on the instantaneous gradient, leading to inherently memoryless dynamics \cite{Polyak1987,Nesterov2004}. 
In contrast, fractional-order updates incorporate a history-dependent mechanism, where past gradients contribute through a kernel-weighted accumulation \cite{Wang2017FractionalBP,LiZeng2015}. 
This formulation is consistent with the memory structures observed in fractional differential equations \cite{Podlubny1999,Diethelm2010}, and enables optimization dynamics that explicitly account for long-range temporal interactions.

The distinction between momentum-based methods and fractional gradient descent (FGD) becomes particularly clear when viewed through their underlying memory kernels. 
In classical heavy-ball momentum, an auxiliary variable evolves according to
\[
\bm{v}_{n+1} = \beta \bm{v}_n + \bm{g}_n,
\qquad 0 \le \beta < 1,
\]
which can be interpreted as a linear discrete-time dynamical system \cite{Polyak1964,Nesterov2004}. 
Assuming $\bm{v}_0=\bm{0}$, unrolling the recursion yields
\[
\bm{v}_n = \sum_{j=0}^{n-1} \beta^{\,n-1-j}\,\bm{g}_j,
\]
revealing that momentum is equivalent to a convolution with an exponentially decaying kernel \cite{Goh2017,Lessard2016}. 
This exponential weighting implies that past gradients are discounted geometrically, resulting in a short-range memory mechanism. 
In particular, when $\beta$ approaches $1$, the effective memory length scales as $(1-\beta)^{-1}$, indicating that the influence of past information is governed by a single characteristic time scale.

By contrast, Caputo-based FGD induces a fundamentally different memory structure in the descent dynamics, characterized by a power-law kernel. 
The discrete fractional gradient at iteration $n$ is computed as a weighted sum of historical gradients:
\begin{equation}\label{eq:discrete_update}
    \widehat{\bm{g}}_n = \sum_{j=0}^{n-1} w_{n-j}^{(\alpha)} \bm{g}_j,
\end{equation}
where the weights $w_k^{(\alpha)}$ are positive and non-increasing. These coefficients are defined as follows, with their detailed derivation and properties provided in \ref{app:LSA}:
\begin{equation}\label{eq:weights_def}
    w_{k}^{(\alpha)} = \frac{\Delta t^{1-\alpha}}{\Gamma(2-\alpha)} \left[ k^{1-\alpha} - (k-1)^{1-\alpha} \right], \quad k=1, 2, \dots, n,
\end{equation}
where $0 < \alpha < 1$ denotes the fractional order.
Unlike exponential decay, this power-law weighting introduces a scale-free memory mechanism with no intrinsic characteristic time scale. 
As a consequence, the influence of past gradients persists over long horizons in a non-negligible manner, rather than being rapidly attenuated. Importantly, this behavior cannot be replicated by any finite combination of exponential kernels, since such mixtures inevitably retain an exponential tail and therefore impose a finite memory scale. 
This structural difference implies that fractional gradient descent is not merely a momentum variant with a modified parameter, but represents a fundamentally distinct class of nonlocal optimization dynamics.

The development of fractional optimization can be broadly organized into two closely related strands. 
The first stems from the direct discretization of fractional gradient flows, where Caputo or Riemann–Liouville derivatives are employed to define nonlocal descent dynamics \cite{LiZeng2015,Wang2017FractionalBP}. 
These approaches have demonstrated reduced oscillatory behavior and improved stability in nonlinear optimization settings, highlighting the practical benefits of incorporating long-memory effects. 
The second strand adopts an algorithmic perspective, interpreting fractional updates as history-weighted gradient schemes and analyzing their convergence under convexity assumptions \cite{Pu2015Fractional,HerreraAlcantara2022}. 
From this viewpoint, fractional descent can be understood as inducing a power-law memory kernel, providing a conceptual link between optimization algorithms and fractional dynamical systems. 

While these developments have clarified the role of memory in optimization, most existing works either focus on the formulation of fractional updates or their theoretical properties, with limited attention to how such long-range memory can be implemented efficiently in large-scale settings.
More recently, fractional optimization has been explored in neural network training and signal processing applications, where empirical studies indicate improved robustness to noise and ill-conditioning \cite{Wang2017FractionalBP,Pu2015Fractional,HerreraAlcantara2022,Wang2022Neurocomputing}. 
In parallel, numerical analysis has developed efficient techniques for handling long-memory kernels, including sum-of-exponentials approximations and hierarchical history compression \cite{Jiang2017SOE,Baffet2017}. 

Despite the growing interest in fractional-order optimization, existing approaches face a fundamental challenge: the incompatibility between long-memory dynamics and scalable computation. 
FGD, particularly in its Caputo formulation, introduces a nonlocal temporal structure in which the descent direction depends on the entire history of past gradients. 
This power-law memory is a key feature that distinguishes fractional methods from classical gradient-based algorithms, enabling improved stability and robustness in complex optimization landscapes.

However, a direct implementation of Caputo-based FGD is computationally prohibitive. 
The discrete realization of the fractional convolution requires access to all past gradients, resulting in $\mathcal{O}(n)$ cost per iteration and $\mathcal{O}(N^2)$ total complexity over $N$ iterations. 
This quadratic scaling severely limits the applicability of fractional optimization methods in large-scale learning problems.
A natural approach to address this issue is to adopt fast-memory techniques developed in the numerical analysis of fractional differential equations, such as sum-of-exponentials approximation (SOE)\cite{jiang2017fast} and hierarchical convolution schemes. 
These methods reduce computational complexity by compressing the history dependence of the fractional kernel. But, they are primarily designed for forward time-stepping problems, where the objective is accurate operator evaluation under a fixed trajectory.

In contrast, optimization algorithms involve an evolving sequence of iterates, where gradient information must be updated adaptively. 
As a result, a direct transfer of fast-memory techniques is not straightforward. Naive memory compression may distort the effective descent direction, potentially altering the optimization dynamics. This observation raises a fundamental question: how can one achieve scalable computation while preserving the intrinsic power-law memory structure of fractional optimization?
In this paper, we address this challenge by embedding fast-memory mechanisms directly into the optimization dynamics. 
Rather than treating memory compression as an external approximation, we reinterpret fractional gradient descent as a history-dependent convolution operator and construct a compressed representation that preserves its structural properties. 

A key insight of this work is that the proposed fast-memory schemes—based on SOE approximation and dyadic hierarchical discrete convolution (DHDC)—can be viewed as inducing a controlled perturbation of the ideal Caputo convolution. 
This perspective allows us to analyze their effect on optimization dynamics and establish that the convergence behavior of fractional gradient descent is preserved under suitable conditions.
To make the formulation concrete, we consider Caputo-based fractional gradient descent. 
For $0<\alpha<1$, the Caputo fractional derivative is defined by
\begin{equation}\label{eq:caputo-def}
    \prescript{C}{a}{D}_{t}^{\alpha} f(t)
    =
    \frac{1}{\Gamma(1-\alpha)}
    \int_{a}^{t} \frac{f'(\tau)}{(t-\tau)^{\alpha}}\, d\tau,
\end{equation}
which introduces a slowly decaying kernel and induces long-range memory effects. 
Such operators naturally arise in the modeling of systems with history dependence and have been extensively studied in fractional calculus \cite{Diethelm2010}.
Building on this formulation, we define a history-dependent descent direction as a convolution over past gradients, leading to a discrete fractional gradient descent update. 
This formulation captures the essential nonlocal structure of the Caputo operator while providing a natural bridge to numerical discretization.
The main contributions of this paper are summarized as follows:
\begin{enumerate}
\item We reinterpret Caputo-based fractional gradient descent as a nonlocal convolution operator acting on the gradient history, clarifying the role of power-law memory in optimization.

\item We develop a scalable computational framework by integrating SOE-based approximation and DHDC-based memory compression, reducing the overall complexity from $\mathcal{O}(N^2)$ to nearly $\mathcal{O}(N)$.

\item We show that the proposed fast-memory schemes can be interpreted as controlled perturbations of the ideal fractional descent direction, and establish convergence preservation under smooth strong convexity.

\item We provide a unified framework that connects fractional numerical analysis with large-scale optimization, enabling efficient realization of long-memory effects without truncating the intrinsic power-law structure.
\end{enumerate}

The proposed approach bridges the gap between high-fidelity fractional modeling and scalable optimization, providing a principled pathway for incorporating long-range memory into modern learning algorithms.

The effectiveness of the method is demonstrated through one highly multimodal benchmark problem and three representative problem classes: Rastringin optimization, battery-related regression tasks exhibiting diffusion-driven memory effects, stochastic signal estimation with long-range dependence, and image classification on CIFAR-10, where nonconvexity and ill-conditioning are prominent. 
These experiments are intentionally chosen to cover optimization landscapes with fundamentally different memory and stability characteristics. In particular, the Rastrigin benchmark highlights the ability of the proposed DHDC dynamics to escape shallow local minima through persistent but bounded oscillatory behavior induced by compressed fractional memory. The remaining applications further demonstrate that power-law memory can improve optimization stability, robustness, and learning behavior compared with classical gradient descent and momentum-based methods while substantially reducing the computational burden associated with full-history fractional optimization.

The remainder of the paper is organized as follows. 
Section~\ref{sec:SOE_and_DHDC} introduces the proposed fast-memory framework and presents the SOE- and DHDC-based realizations of fractional gradient descent. 
Section~\ref{subsec:DHDC} describes the construction of the DHDC algorithm and its hierarchical discrete convolution mechanism for compressed long-memory accumulation. 
Section~\ref{sec:convergence} establishes convergence preservation results under smooth strong convexity and analyzes the effect of fast-memory perturbations on the optimization dynamics. 
Section~\ref{sec:numerical_experiments} describes the experimental setup and benchmark configurations, while Section~\ref{sec:numerical_result} presents the numerical results, oscillation analysis, and empirical computational complexity studies that validate the effectiveness and scalability of the proposed framework.

\section{From Caputo gradient flow to a discrete optimizer}\label{sec:SOE_and_DHDC}

To construct a practical algorithm, we work on a uniform time grid $t_n = n\Delta t$ with step size $\Delta t>0$, and denote $\bm{\theta}_n \approx \bm{\theta}(t_n)$ and
$\bm{g}_n = \nabla L(\bm{\theta}_n)$.
Under a Caputo-based formulation, the descent direction at step $n$ is no longer given by a single gradient, but by a history-weighted combination of all past gradients.
As discussed in Section \ref{sec:intro}, this leads naturally to a discrete convolution (DC) structure, which serves as the starting point for all FGD methods considered
in this paper.

The central challenge is therefore not how to define fractional descent, but how to evaluate this history-dependent convolution efficiently and robustly inside an optimizer.
In the following, we present two complementary strategies to reduce this computational burden.
The first is introduced in Section \ref{subsec:SOE} and accelerates the convolution by approximating the Caputo kernel via a SOE representation.
The second, introduced in Section \ref{subsec:DHDC}, compresses the gradient history directly using a DHDC.
Both approaches preserve the essential power-law memory structure, but differ in how memory is represented and updated during training.

\subsection{Fast Caputo memory in fractional gradient descent via Sum-Of-Exponentials}\label{subsec:SOE}

A defining characteristic of Caputo-based FGD is the presence of long-range, power-law memory in the update direction. After discretization, the fractional gradient at iteration $n$ takes the convolutional form:
\begin{equation}\label{eq:caputo-kernel-conv}
    \widehat{\bm{g}}_n = \sum_{j=0}^{n-1} w_{n-j}^{(\alpha)} \bm{g}_j,
\end{equation}
where $\bm{g}_j = \nabla L(\bm{\theta}_j)$, and the weights $w_{k}^{(\alpha)}$ are defined as in \eqref{eq:weights_def}. This formulation faithfully realizes fractional dynamics, as all past gradients contribute to the descent direction with algebraically decaying influence. 

However, a direct implementation of \eqref{eq:caputo-kernel-conv} poses a significant computational challenge. As the iteration count $n$ increases, both the memory required to store the gradient history and the computational work per step grow linearly, resulting in an $\mathcal{O}(n^2)$ total cost after $n$ steps. Such growth is prohibitive for long training horizons and large-scale learning problems where $n$ can be extremely large. To overcome this "growing memory" bottleneck, an efficient numerical approach is required to approximate the non-local convolution with a fixed per-step cost. This motivates the use of the Sum-of-Exponentials (SOE) approximation, which transforms the global summation into a set of local recursive updates.

Our first acceleration strategy exploits a structural property of the Caputo kernel itself. The kernel
\[
    k(t)=\frac{1}{\Gamma(1-\alpha)}\,t^{-\alpha},
    \qquad 0<\alpha<1,
\]
is completely monotone, and therefore admits accurate approximations by positive weighted sums of exponentials.
More precisely, for any prescribed tolerance $\varepsilon>0$ on a finite time window $t\in[t_{\min},t_{\max}]$, one can construct coefficients
$\{\xi_m,\omega_m\}_{m=1}^M$ with $0<\xi_1<\cdots<\xi_M$ and $\omega_m>0$ such that 
\begin{equation}\label{eq:soe-approx}
    k(t)
    \;\approx\;
    \sum_{m=1}^{M}\omega_m e^{-\xi_m t},
    \qquad
    \frac{\big\|k-\sum_{m=1}^{M}\omega_m e^{-\xi_m(\cdot)}\big\|_\infty}{\|k\|_\infty}
    <\varepsilon.
\end{equation}
Substituting the SOE approximation into the convolution yields a practical fractional descent direction:
\begin{equation}\label{eq:soe-fractional-gradient}
    \widehat{\bm{g}}^{SOE}_n \approx \sum_{m=1}^{M}\omega_m\,\bm{R}_m^{(n)},
\end{equation}
where each auxiliary state $\bm{R}_m^{(n)}\in\mathbb{R}^d$ evolves according to the recursive update rule:
\begin{equation}\label{eq:soe-recursion}
    \bm{R}_m^{(n)} = e^{-\xi_m\Delta t}\,\bm{R}_m^{(n-1)} + \bm{g}_{n-1}, \quad \bm{R}_m^{(0)}=\bm{0}.
\end{equation}
The detailed derivation from the continuous-time convolution and the corresponding pseudocode are provided in \ref{Appendix:B} and Algorithm \ref{alg:soe-update} in \ref{Appendix:Algorithm}, respectively.

In this form, the entire Caputo memory is encoded in $M$ exponentially weighted
accumulators.
The resulting SOE-based fractional gradient descent (SOE-FGD) requires only
$\mathcal{O}(M)$ memory and $\mathcal{O}(M)$ operations per iteration, with $M\ll n$
in typical applications.

In this work, SOE is not treated merely as a fast quadrature for fractional operators, but as an explicit optimizer engine.
Each iteration updates a fixed set of memory modes and constructs a history-aware descent direction that can be combined with standard learning-rate schedules and
preconditioners. From an optimization viewpoint, the SOE modes admit a natural interpretation: modes with large $\xi_m$ capture short-term dynamics, while modes with small $\xi_m$
encode long-term memory. In this way, SOE-FGD replaces the single-gradient update of classical gradient descent by a multi-timescale descent direction with controlled memory depth.

The SOE parameters are computed once during initialization.
Given an expected training horizon, we select a window $[t_{\min},t_{\max}]$ and place
the decay rates on a logarithmic grid,
\begin{equation}\label{eqn:SOE_log}
    \xi_m
    =
    \exp\!\left[
    \log\!\left(1/t_{\min}\right)
    +
    \frac{m-1}{M-1}
    \Big(
    \log\!\left(1/t_{\max}\right)
    -
    \log\!\left(1/t_{\min}\right)
    \Big)
    \right],
\end{equation}
and determine the weights $\{\omega_m\}$ by solving a nonnegative least-squares fit of the target kernel on logarithmically spaced sample points.
The resulting coefficients are stored as fixed tensors and reused throughout training.

At the same time, SOE compression imposes a structural constraint: the exponential basis is fixed over a prescribed time window, and the associated coefficients depend on the chosen fractional order $\alpha$.
If $\alpha$ is adapted during training, the SOE coefficients must be refitted or switched from a precomputed library.
This observation motivates the alternative strategy introduced next.
Rather than approximating the kernel, the DHDC method compresses the gradient history itself, yielding an order-agnostic memory representation that naturally supports
dynamic fractional-order scheduling.
$\text{where} ~\tilde{\bm{g}}_n = \frac{1}{\Gamma(1-\alpha)}\int_{t_0}^t (t-\eta)^{-\alpha} u'(\eta)d\eta,$

\subsection{Dyadic Hierarchical Discrete Convolution compression}\label{subsec:DHDC}

We now consider an alternative strategy that does not approximate the kernel itself, but instead reduces the cost of evaluating the history-dependent convolution.

The approach adopted here is inspired by the shifted binary block partition (SBBP) method for the fast evaluation of variable-order Caputo derivatives \cite{Fang2020SBBP}.
In that work, the Caputo integral is decomposed over a hierarchy of dyadic subintervals, and each subintegral is approximated using linear interpolation and stored moment
information, enabling efficient updates in time-fractional differential equations.
In contrast, the FGD update is already expressed as a history-weighted convolution of past gradients, as shown in \eqref{eq:discrete_update}.
Rather than approximating subintegrals of a continuous operator, we compress the discrete gradient history itself.
This leads to a simplified dyadic structure referred to as dyadic hierarchical discrete convolution (DHDC).
We begin with the full discrete Caputo-type convolution at iteration \(n\),
\begin{equation}\label{eq:DHDC-start}
    \widetilde{\bm{g}}_n
    =
    \sum_{j=0}^{n-1}
    k_{\alpha}((n-j)\Delta t) \bm{g}_j,
    \qquad
    k_{\alpha}(t)=\frac{t^{-\alpha}}{\Gamma(1-\alpha)}\Delta t,\quad \alpha\in(0,1),
\end{equation}
where \(\bm{g}_j=\nabla L(\bm{\theta}_j)\). A detailed derivation of this discrete Caputo-type convolution from the continuous Caputo fractional derivative, together with the corresponding quadrature weights for the constant approximation scheme, is provided in \ref{subsec:constant_approximation_scheme}.

As discussed in \cref{subsec:SOE}, a direct evaluation of the formula \eqref{eq:DHDC-start} requires $\mathcal{O}(n)$ work per iteration and $\mathcal{O}(n^2)$ total complexity over $n$ iterations. Such growth is prohibitive for large-scale optimization tasks. To reduce this complexity while maintaining the order-agnostic property, we group the gradient history into bins whose effective widths grow geometrically with age.
At time \(t_n=n\Delta t\), define bins indexed by \(b=0,1,\dots,B_n-1\) with nominal width
\[
    \ell_b = 2^b \Delta t, 
\]
and associated look-back windows
\begin{equation}\label{eq:DHDC-window}
    I_b(n)
    =
    \Big[t_n-2^{b+1}\Delta t,\;
         t_n-2^{b}\Delta t\Big].
\end{equation}
Recent gradients are retained at high resolution, whereas older gradients are represented more coarsely, reflecting the slow variation of the Caputo kernel for large time lags.
Figure \ref{fig:DHDC-windows-example} provides a visualization of these dyadic look-back windows for selected iteration instances.

\begin{figure}[htbp]
    \centering
    \includegraphics[width=0.8\textwidth]{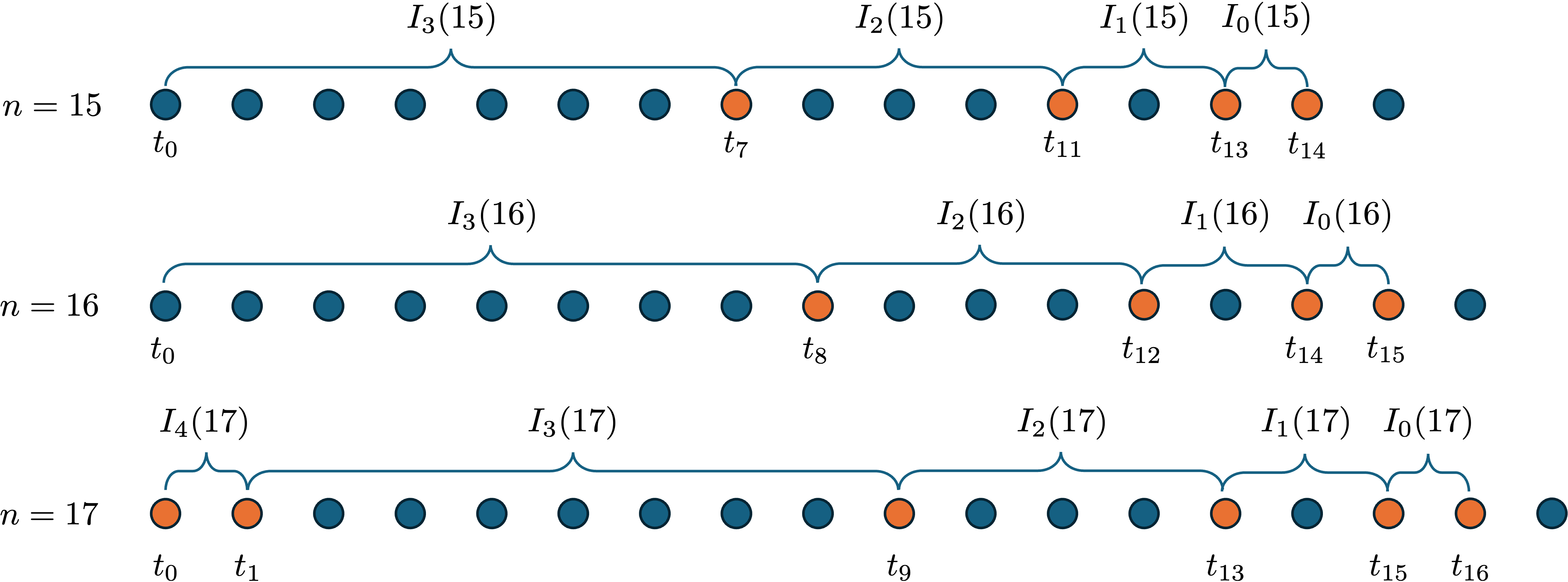}
    \caption{Dyadic look-back window boundaries \(I_b(n)\) for \(n =15,16\), and \(17\) with \(B=5\).}
    \label{fig:DHDC-windows-example}
\end{figure}

Instead of storing all past gradients, DHDC maintains aggregated quantities
\[
    \bm{S}^n_b \in \mathbb{R}^d,
    \qquad
    C^n_b \in \mathbb{N},
    \qquad b=0,1,\dots,B_n-1.
\]
Here, \(\bm{S}^n_b\) represents the aggregated contribution of past gradients stored in bin \(b\), and \(C^n_b\) denotes the corresponding effective count.
Because the carry operation redistributes a fraction of the accumulated contribution between neighboring bins, \(\bm{S}^n_b\) should not be interpreted as the sum over a fixed index set.
Instead, it corresponds to a weighted aggregation of past gradients, consistent with the induced weights \(\omega_{j,b}^{(n)}\), where each gradient may contribute fractionally to multiple bins.
More precisely, the DHDC state induces nonnegative aggregation weights
\[
    \omega^{(n)}_{j,b}\ge 0,
    \qquad j=0,\dots,n-1,\quad b=0,\dots,B_n-1,
\]
such that
\begin{equation}\label{eq:DHDC-weighted-state}
    \bm{S}^n_b
    =
    \sum_{j=0}^{n-1}
    \omega^{(n)}_{j,b}\bm{g}_j,
    \qquad
    C^n_b
    =
    \sum_{j=0}^{n-1}
    \omega^{(n)}_{j,b}.
\end{equation}
The weights satisfy the partition of unity propeprty
\begin{equation}\label{eq:DHDC-mass-conservation}
    \sum_{b=0}^{B_n-1}
    \omega^{(n)}_{j,b}
    =
    1,
    \qquad j=0,\dots,n-1,
\end{equation}
so that every past gradient contributes exactly once in total, although its contribution may be distributed across dyadic bins through the carry operation.

At each iteration, the newest gradient is first inserted into the finest bin:
\begin{equation}\label{eq:DHDC-insertion}
    \bm{S}^n_0 \leftarrow \bm{S}^{n-1}_0 + \bm{g}_n,
    \qquad
    C^n_0 \leftarrow C^{n-1}_0 + 1.
\end{equation}
Equivalently, the newly inserted gradient initially satisfies
\[
    \omega^{(n)}_{n,0}=1,
    \qquad
    \omega^{(n)}_{n,b}=0
    \quad \text{for } b\ge 1.
\]
The bins are then updated by a dyadic carry operation with capacity
\[
    \mathrm{cap}(b)=2^b.
\]
Whenever \(C^n_b>\mathrm{cap}(b)\), a fraction of the aggregated contribution in bin \(b\) is transferred to bin \(b+1\).
We define
\begin{equation}\label{eq:DHDC-half-M}
    h^n_b := \max\big(1,\lfloor C^n_b/2 \rfloor\big),
    \qquad
    \phi^n_b := \frac{h^n_b}{C^n_b},
    \qquad
    \bm{M}^n_b := \phi^n_b \bm{S}^n_b .
\end{equation}
The scalar \(h^n_b\) determines the amount of effective count transferred, while \(\bm{M}^n_b\) represents the corresponding transferred contribution in the aggregated gradient.

The carry update is then
\begin{align}
    \bm{S}^n_b
    &\leftarrow
    \bm{S}^n_b-\bm{M}^n_b,
    &
    C^n_b
    &\leftarrow
    C^n_b-h^n_b,
    \label{eq:minus}
\end{align}
\begin{align}
    \bm{S}^n_{b+1}
    &\leftarrow
    \bm{S}^n_{b+1}+\bm{M}^n_b,
    &
    C^n_{b+1}
    &\leftarrow
    C^n_{b+1}+h^n_b.
    \label{eq:plus}
\end{align}

The same operation admits a precise interpretation in terms of the aggregation weights.
For all gradients represented in bin \(b\), a fraction \(\phi_b^n\) of their contribution is transferred to bin \(b+1\), while the remaining fraction \(1-\phi_b^n\) remains in bin \(b\).
Thus, the carry operation can be written as
\[
    \omega^{(n)}_{j,b}
    \leftarrow
    (1-\phi_b^n)\omega^{(n)}_{j,b},
    \qquad
    \omega^{(n)}_{j,b+1}
    \leftarrow
    \omega^{(n)}_{j,b+1}
    +
    \phi_b^n\omega^{(n)}_{j,b}.
\]
This formulation shows that DHDC performs a redistribution of aggregation weights across dyadic bins, rather than forming a strict partition of the past indices.

\begin{figure}[H]
    \centering
    \includegraphics[width=0.9\linewidth]{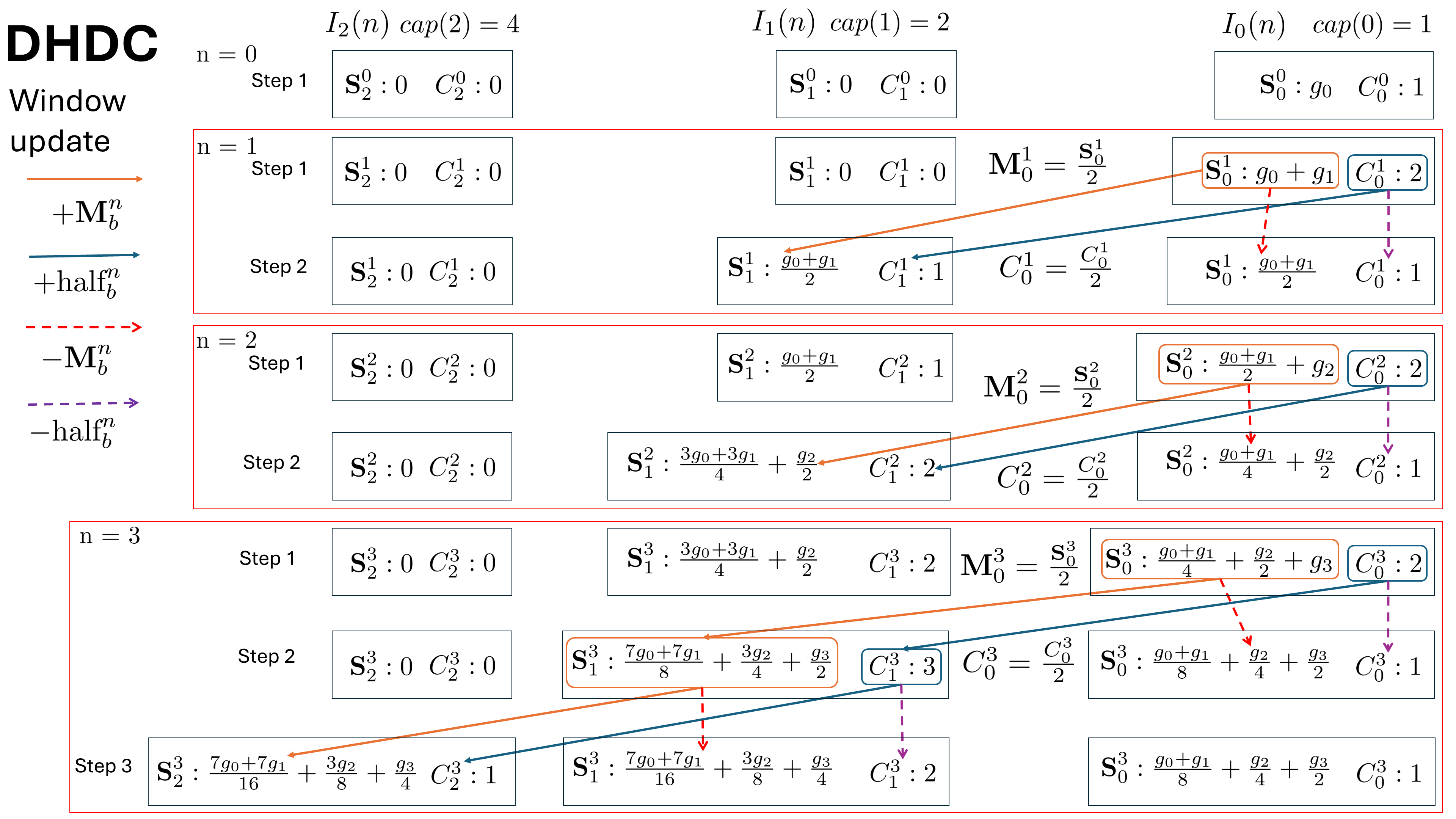}
    \caption{A schematic diagram of the DHDC window update process. The carry operation transfers a fraction of the accumulated gradient mass and count from a finer bin to a coarser bin, yielding a dyadic compressed representation of the gradient history.}
    \label{fig:DHDC_schematic_diagram}
\end{figure}

This operation corresponds to the assumption of uniform redistribution within each bin.
The assumption is used only for compression; the Caputo kernel weighting is applied after the compressed representation has been formed.
The carry rule keeps \(C^n_b\) of order \(2^b\), so that the number of active bins satisfies
\[
    B_n=\mathcal{O}(\log n).
\]
To reconstruct the fractional descent direction, the Caputo kernel is evaluated at the bin level.
For each window \(I_b(n)=[\tau_{b}^{-},\tau_{b}^{+}]\), we define the bin-averaged kernel
\begin{equation}\label{eq:DHDC-kernel}
    \overline{k}_b^{(\alpha)}(n)
    =
    \frac{1}{\Gamma(1-\alpha)}
    \frac{(\tau_{b}^{+})^{1-\alpha}-(\tau_{b}^{-})^{1-\alpha}}
    {(1-\alpha)(\tau_{b}^{+}-\tau_{b}^{-})}\Delta t,
\end{equation}
with endpoint clamping \(t_{\min},t_{\max}\ge \varepsilon_t>0\) to avoid numerical instability.
The compressed convolution is then approximated by
\begin{equation}\label{eq:DHDC-agg}
    \widehat{\bm{g}}^{\mathrm{DHDC}}_n
    =
    \sum_{b=0}^{B_n-1}
    \overline{k}_b^{(\alpha)}(n)\,\bm{S}^n_b.
\end{equation}
Using \eqref{eq:DHDC-weighted-state}, the DHDC approximation can also be written as an effective reweighted convolution:
\begin{equation}\label{eq:DHDC-effective-conv}
    \widehat{\bm{g}}^{\mathrm{DHDC}}_n
    =
    \sum_{j=0}^{n-1}
    \widetilde{w}^{(\alpha)}_{n,j}\bm{g}_j,
    \qquad
    \widetilde{w}^{(\alpha)}_{n,j}
    =
    \sum_{b=0}^{B_n-1}
    \overline{k}_b^{(\alpha)}(n)\,
    \omega^{(n)}_{j,b}.
\end{equation}
Equation \eqref{eq:DHDC-effective-conv} is important because it shows that DHDC remains a discrete convolution-type approximation of the full Caputo memory term, where the effective weights are induced by dyadic redistribution across the compressed history bins.
\begin{remark}
 The architecture of the DHDC differs from the original SBBP formulation in two aspects.
First, DHDC operates directly on the discrete gradient sequence rather than on
subintegrals of a continuous Caputo operator.
Second, it avoids storing higher-order moment information and instead relies on
mass-conserving aggregated statistics, which are sufficient for optimization.
As a result, DHDC preserves the qualitative long-memory structure of the Caputo kernel
while providing a simpler and more efficient implementation. The Algorithm~\ref{alg:DHDC-update} presents the pseudocode of the proposed DHDC.

From an optimization perspective, the per-iteration cost is reduced to
\(\mathcal{O}(\log n)\),
and the stored history is independent of the fractional order.
If \(\alpha\) is adapted during training,
only the scalar weights \(\overline{k}_b(n;\alpha)\) need to be recomputed,
without modifying the compressed history.
\end{remark}

\section{Convergence of Fractional Gradient Descent with Fast-Memory Algorithm}
\label{sec:convergence}

In this section, we establish a convergence guarantee for FGD
under standard smoothness and strong convexity assumptions,
and extend the analysis to practical fast-memory implementations. We begin by analyzing the full discrete Caputo-type convolution \(\widetilde{\bm g}_n\) in \eqref{eq:DHDC-start}.
This full discrete convolution serves as a reference object that captures the intrinsic
long-memory structure of fractional optimization.
Under suitable assumptions, it can be shown that \(\widetilde{\bm g}_n\)
remains positively correlated with the instantaneous gradient,
which ensures descent and linear convergence. Since the fast-memory algorithms (SOE and DHDC) do not compute \(\widetilde{\bm g}_n\) exactly, we construct an approximate direction
\begin{equation}\label{eq:approx_frac_dir}
    \widehat{\bm g}_n = \widetilde{\bm g}_n + \bm e_n,
\end{equation}
where \(\bm e_n\) represents the approximation error induced by
the kernel approximation of the history compression in fast algorithms.

The central question, therefore, is whether such approximations
preserve the convergence properties of \(\widetilde{\bm g}_n\).
To address this, we adopt a perturbation-based perspective:
rather than re-deriving convergence from first principles for each algorithm,
we quantify how deviations from \(\widetilde{\bm g}_n\) affect the descent behavior. Based on this strategy, we show that as long as the approximation error
remains sufficiently small relative to the gradient magnitude,
the fast-memory algorithms retain the descent property and the linear convergence rate
of the full discrete scheme.
This result formally justifies the use of fast-memory algorithms in large-scale learning, 
by demonstrating that computational efficiency can be achieved
without sacrificing the essential convergence guaranties of fractional optimization.

\subsection{Convergence analysis of FGD with full discrete Caputo-tuype convolution}\label{subsec:conv_FGD}

Let $L:\mathbb{R}^d \to \mathbb{R}$ be a differentiable objective function.
Assume that $L$ is $\mu$-strongly convex and has $L$-Lipschitz continuous gradient:
\begin{equation}
    \langle \nabla L(\bm{x}) - \nabla L(\bm{y}), \bm{x} - \bm{y} \rangle
    \ge \mu \|\bm{x} - \bm{y}\|^2,
    \qquad
    \|\nabla L(\bm{x}) - \nabla L(\bm{y})\|
    \le L \|\bm{x} - \bm{y}\|.
\end{equation}

Let $\bm{\theta}^\star = \arg\min L$.
We consider the FGD iteration
\begin{equation}
    \bm{\theta}_{n+1}
    =
    \bm{\theta}_n - \eta\,\widetilde{\bm{g}}_n,
    \qquad
    \widetilde{\bm{g}}_n
    =
    \sum_{j=0}^{n} w_{n-j}\,\bm{g}_j,
    \quad
    \bm{g}_j=\nabla L(\bm{\theta}_j),
\end{equation}
where the weights $\{w_k\}_{k\ge 0}$ are positive and nonincreasing.

\begin{lemma}
\label{lem:correlation}
Assume that there exist constants $C_H>0$ and $\bar{\rho}\ge 1$ such that
\begin{equation}
    \sum_{k=1}^{n} w_k \sum_{i=0}^{k-1} \|\widetilde{\bm{g}}_{n-i-1}\|
    \le
    C_H \max_{0 \le j \le n} \|\bm{g}_j\|,
\end{equation}
and
\begin{equation}
    \max_{0 \le j \le n} \|\bm{g}_j\|
    \le
    \bar{\rho}\,\|\bm{g}_n\|.
\end{equation}
Then, for sufficiently small step size $\eta$, there exists $\gamma\in(0,1)$ such that
\begin{equation}
    \langle \bm{g}_n,\widetilde{\bm{g}}_n\rangle
    \ge
    \gamma w_0 \|\bm{g}_n\|^2.
\end{equation}
\end{lemma}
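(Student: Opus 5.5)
Write $\widetilde{\bm g}_n = w_0\bm g_n + \sum_{k=1}^{n} w_k \bm g_{n-k}$. The plan is to compare each past gradient $\bm g_{n-k}$ with the current gradient $\bm g_n$ through the iterates, using the update rule $\bm\theta_{j+1}-\bm\theta_j = -\eta\widetilde{\bm g}_j$. The instantaneous term gives exactly $w_0\|\bm g_n\|^2$. The remaining terms are split as $\bm g_{n-k} = \bm g_n + (\bm g_{n-k}-\bm g_n)$, so that
\[
\langle \bm g_n,\widetilde{\bm g}_n\rangle
= \Big(\sum_{k=0}^{n} w_k\Big)\|\bm g_n\|^2 + \sum_{k=1}^{n} w_k\langle \bm g_n, \bm g_{n-k}-\bm g_n\rangle .
\]
Since every $w_k$ is positive, the first sum is at least $w_0\|\bm g_n\|^2$. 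The task is then to bound the correction term from below by $-(1-\gamma)w_0\|\bm g_n\|^2$.

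For the correction term, first apply Cauchy--Schwarz and the $L$-Lipschitz property of the gradient:
\[
|\langle \bm g_n,\bm g_{n-k}-\bm g_n\rangle| \le L\|\bm g_n\|\,\|\bm\theta_n-\bm\theta_{n-k}\|.
\]
Next telescope the iterate difference:
\[
\|\bm\theta_n-\bm\theta_{n-k}\| \le \eta\sum_{i=0}^{k-1}\|\widetilde{\bm g}_{n-i-1}\|.
\]
Summing against the weights $w_k$ produces exactly the quantity in the first hypothesis. This gives
\[
\Big|\sum_{k=1}^{n} w_k\langle \bm g_n,\bm g_{n-k}-\bm g_n\rangle\Big| \le \eta L C_H\|\bm g_n\|\max_{j\le n}\|\bm g_j\| \le \eta L C_H\bar\rho\,\|\bm g_n\|^2,
\]
where the last step uses the second hypothesis.

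Combining the two pieces yields
\[
\langle \bm g_n,\widetilde{\bm g}_n\rangle \ge (w_0-\eta L C_H\bar\rho)\|\bm g_n\|^2 .
\]
Now set $\gamma := 1-\eta LC_H\bar\rho/w_0$. Choosing $\eta < w_0/(LC_H\bar\rho)$ gives $\gamma\in(0,1)$, which proves the lemma. An alternative that gives more room is to keep the full factor $\sum_k w_k \ge w_0$ from the first sum; either version suffices.

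The step needing the most care is the bookkeeping of the telescoping indices. The inner sum $\sum_{i=0}^{k-1}\|\widetilde{\bm g}_{n-i-1}\|$ must match the hypothesis exactly, which requires $\bm\theta_n-\bm\theta_{n-k}=-\eta\sum_{i=0}^{k-1}\widetilde{\bm g}_{n-i-1}$; this should be verified against the update rule. A second point to watch is that the constants $C_H,\bar\rho$ must be uniform in $n$. Otherwise the step-size threshold would depend on $n$ and the "sufficiently small $\eta$" conclusion would be circular. I will state the lemma's conclusion per-$n$ under the assumption that the constants are fixed independently of $n$. The degenerate case $\bm g_n=\bm 0$ is trivial, since then both sides vanish.
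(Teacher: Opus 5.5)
Your proposal is correct and follows the paper's proof essentially step for step: the same splitting $\bm g_{n-k}=\bm g_n+(\bm g_{n-k}-\bm g_n)$, then Lipschitz continuity and telescoping of the iterates to reach the first hypothesis, then the second hypothesis to get $(w_0-\eta L C_H\bar\rho)\|\bm g_n\|^2$. The only cosmetic difference is that the paper fixes $\gamma$ first and imposes $\eta\le (1-\gamma)w_0/(LC_H\bar\rho)$, whereas you read $\gamma$ off from $\eta$; the two are equivalent.
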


\begin{proof}
We first expand the inner product between the current gradient $\bm{g}_n$ and the fractional descent direction $\widetilde{\bm{g}}_n$:
\begin{equation}
    \langle \bm{g}_n, \widetilde{\bm{g}}_n \rangle
    = w_0 \|\bm{g}_n\|^2 + \sum_{k=1}^{n} w_k \langle \bm{g}_n, \bm{g}_{n-k} \rangle.
\end{equation}
Using the identity
\[
\langle \bm{g}_n, \bm{g}_{n-k} \rangle
= \|\bm{g}_n\|^2 - \langle \bm{g}_n, \bm{g}_n - \bm{g}_{n-k} \rangle,
\]
we obtain
\begin{equation} \label{eqn:inner_decomposed}
    \langle \bm{g}_n, \widetilde{\bm{g}}_n \rangle
    = \left( \sum_{k=0}^{n} w_k \right) \|\bm{g}_n\|^2 - \sum_{k=1}^{n} w_k \langle \bm{g}_n, \bm{g}_n - \bm{g}_{n-k} \rangle.
\end{equation}
By the $L$-Lipschitz continuity of the gradient,
\begin{equation}
    \|\bm{g}_n - \bm{g}_{n-k}\|
    \le \sum_{i=0}^{k-1}\| \bm{g}_{n-i} - \bm{g}_{n-i-1} \|
    \le L \sum_{i=0}^{k-1} \|\bm{\theta}_{n-i} - \bm{\theta}_{n-i-1}\|
    = L\eta \sum_{i=0}^{k-1} \|\widetilde{\bm{g}}_{n-i-1}\|,
\end{equation}
where we used the update rule $\bm{\theta}_{j} - \bm{\theta}_{j-1} = -\eta \widetilde{\bm{g}}_{j-1}$.
By Cauchy--Schwarz,
\begin{equation}
    \sum_{k=1}^{n} w_k |\langle \bm{g}_n, \bm{g}_n - \bm{g}_{n-k} \rangle|
    \le L\eta \|\bm{g}_n\| \sum_{k=1}^{n} w_k \sum_{i=0}^{k-1} \|\widetilde{\bm{g}}_{n-i-1}\|.
\end{equation}
Using the first assumption in the lemma,
\begin{equation}
    \sum_{k=1}^{n} w_k |\langle \bm{g}_n, \bm{g}_n - \bm{g}_{n-k} \rangle|
    \le L\eta C_H \|\bm{g}_n\| \max_{j\le n} \|\bm{g}_j\|.
\end{equation}
Hence,
\begin{equation}\label{eq:inner_inequality}
    \langle \bm{g}_n, \widetilde{\bm{g}}_n \rangle
    \ge
    \left( \sum_{k=0}^{n} w_k \right)\| \bm{g}_n \|^2 - L\eta C_H\|\bm{g}_n\| \max_{j\le n}\|\bm{g}_j\|.
\end{equation}
Applying the second assumption in the lemma,
\begin{equation}
    \langle \bm{g}_n, \widetilde{\bm{g}}_n \rangle
    \ge
    \left( \sum_{k=0}^{n} w_k - L\eta C_H \bar{\rho} \right) \|\bm{g}_n\|^2
    \ge
    \left( w_0 - L\eta C_H \bar{\rho} \right) \|\bm{g}_n\|^2.
\end{equation}
Therefore, if
\begin{equation}
    \eta \le \frac{(1 - \gamma) w_0}{L C_H \bar{\rho}},
\end{equation}
then
\begin{equation}
    \langle \bm{g}_n, \widetilde{\bm{g}}_n \rangle \geq \gamma w_0 \|\bm{g}_n\|^2.
\end{equation}
This proves the result.
\end{proof}

By using standard smoothness,
\begin{equation}\label{eqn:conv_ineq1}
    L(\bm{\theta}_{n+1})
    \le
    L(\bm{\theta}_n)
    -
    \eta \langle \bm{g}_n,\widetilde{\bm{g}}_n\rangle
    +
    \frac{L\eta^2}{2}\|\widetilde{\bm{g}}_n\|^2.
\end{equation}
By Cauchy--Schwarz,
\begin{equation}\label{eqn:conv_ineq2}
\|\widetilde{\bm{g}}_n\|^2
\le
\Big(\sum_{k=0}^{n} w_k\Big)
\sum_{k=0}^{n} w_k \|\bm{g}_{n-k}\|^2.
\end{equation}
On a finite training horizon \(0\le n\le N\), define
\[
C_\alpha := \max_{0\le m\le N} \left(\sum_{k=0}^{m} w_k\right)^2,
\]
so that
\begin{equation}\label{eqn:conv_ineq3}
    \|\widetilde{\bm{g}}_n\|^2 \le C_\alpha \max_{j\le n}\|\bm{g}_j\|^2.
\end{equation}
By the gradient correlation lemma (Lemma~\ref{lem:correlation}), we have
\[
    \langle \bm{g}_n,\widetilde{\bm{g}}_n\rangle
    \ge
    \gamma w_0 \|\bm{g}_n\|^2.
\]
Substituting \eqref{eqn:conv_ineq3} and the gradient correlation lemma (Lemma~\ref{lem:correlation}) into the descent inequality \eqref{eqn:conv_ineq1} yields
\begin{equation}\label{eqn:conv1}
    L(\bm{\theta}_{n+1})
    \le
    L(\bm{\theta}_n)
    -
    \eta \gamma w_0 \|\bm{g}_n\|^2
    +
    \frac{L\eta^2}{2} C_\alpha \max_{j\le n}\|\bm{g}_j\|^2.
\end{equation}
Recalling the stability ratio \(\bar{\rho}\) assumed in Lemma~\ref{lem:correlation}, we substitute
\(\max_{j \le n} \|\bm{g}_j\|^2 \le \bar{\rho}^2 \|\bm{g}_n\|^2\) into \eqref{eqn:conv1} to obtain
\begin{align}
    L(\bm{\theta}_{n+1})
    &\le L(\bm{\theta}_n) - \eta \gamma w_0 \|\bm{g}_n\|^2 + \frac{L\eta^2 C_\alpha \bar{\rho}^2}{2} \|\bm{g}_n\|^2 \\
    &= L(\bm{\theta}_n) - \eta \left( \gamma w_0 - \frac{L\eta C_\alpha \bar{\rho}^2}{2} \right) \|\bm{g}_n\|^2.
\end{align}
If \(0 < \eta \le \frac{\gamma w_0}{L C_\alpha \bar{\rho}^2}\), then
\begin{equation} \label{eqn:descent_effective}
    L(\bm{\theta}_{n+1}) \le L(\bm{\theta}_n) - \frac{\eta \gamma w_0}{2} \|\bm{g}_n\|^2.
\end{equation}
Subtracting \(L(\bm{\theta}^\star)\) from both sides and applying the Polyak--Łojasiewicz inequality,
which follows from \(\mu\)-strong convexity, gives
\begin{align}
    L(\bm{\theta}_{n+1}) - L(\bm{\theta}^\star)
    &\le L(\bm{\theta}_n) - L(\bm{\theta}^\star) - \frac{\eta \gamma w_0}{2} \left[ 2\mu (L(\bm{\theta}_n) - L(\bm{\theta}^\star)) \right] \\
    &= (1 - \eta \gamma w_0 \mu) (L(\bm{\theta}_n) - L(\bm{\theta}^\star)).
\end{align}
Defining \(E_n := L(\bm{\theta}_n) - L(\bm{\theta}^\star)\), we obtain
\(E_{n+1} \le \kappa E_n\) with contraction factor \(\kappa = 1 - \eta \gamma w_0 \mu\).

To ensure geometric convergence, it remains to verify that \(0 < \kappa < 1\).
Since \(\eta, \gamma, w_0,\) and \(\mu\) are strictly positive, we immediately have \(\kappa < 1\).
Moreover, using the step-size bound \( \eta \le \frac{\gamma w_0}{L C_\alpha \bar{\rho}^2}\), we obtain
\begin{equation}
    \eta \gamma w_0 \mu
    \le
    \frac{(\gamma w_0)^2 \mu}{L C_\alpha \bar{\rho}^2}
    =
    \left( \frac{\mu}{L} \right)
    \gamma^2
    \left( \frac{w_0^2}{C_\alpha} \right)
    \left( \frac{1}{\bar{\rho}^2} \right).
\end{equation}
Since \(\mu \le L\), \(\gamma<1\), \(w_0^2 \le C_\alpha\), and \(\bar{\rho}\ge 1\), the right-hand side is strictly less than \(1\).
Hence \(0 < \kappa < 1\), and therefore
\begin{equation}
    L(\bm{\theta}_n) - L(\bm{\theta}^\star) \le \kappa^n (L(\bm{\theta}_0) - L(\bm{\theta}^\star)).
\end{equation}

Finally, strong convexity implies the quadratic growth bound
\begin{equation}
    L(\bm{\theta}_n) - L(\bm{\theta}^\star) \ge \frac{\mu}{2} \|\bm{\theta}_n - \bm{\theta}^\star\|^2.
\end{equation}

Combining the previous results, we obtain the following inequality:
\begin{equation}
    \|\bm{\theta}_n - \bm{\theta}^\star\|^2
    \le
    \frac{2}{\mu} \left( L(\bm{\theta}_n) - L(\bm{\theta}^\star) \right)
    \le
    \frac{2}{\mu} \kappa^n (L(\bm{\theta}_0) - L(\bm{\theta}^\star)).
\end{equation}
By applying the $L$-smoothness property,
\begin{equation}
    L(\bm{\theta}_0) - L(\bm{\theta}^\star) \le \frac{L}{2}\|\bm{\theta}_0 - \bm{\theta}^\star\|^2,
\end{equation}
therefore, the distance to the optimum at step $n$ is bounded relative to its initial value as follows:
\begin{equation}
    \|\bm{\theta}_n - \bm{\theta}^\star\|^2
    \le
    \frac{L}{\mu}\kappa^n \|\bm{\theta}_0 - \bm{\theta}^\star\|^2.
\end{equation}

This confirms geometric convergence of the parameter sequence to the unique minimizer \(\bm{\theta}^\star\).

\begin{remark}
As $\alpha \to 1$, the weights concentrate near the present step,
and the method approaches classical gradient descent.
For $\alpha < 1$, the history-weighted averaging introduces additional damping,
which enlarges the stability region while preserving linear convergence.
\end{remark}

\subsection{Error estimate of DHDC}

We now quantify the error introduced by the DHDC compression. 
The purpose of this analysis is not to claim that DHDC is an exact discretization of the Caputo convolution, but to show that its approximation error is explicitly controlled by the variation of the Caputo kernel inside each dyadic bin. Let
\[
    k_\alpha(t)=\frac{t^{-\alpha}}{\Gamma(1-\alpha)}\Delta t,
    \qquad 0<\alpha<1,
\]
and consider the full-history discrete Caputo-type convolution
\begin{equation}
    \widetilde{\bm g}_n
    =
    \sum_{j=0}^{n-1}
    k_\alpha((n-j)\Delta t)\bm g_j .
    \label{eq:ideal-dhdc-conv}
\end{equation}
For each iteration $n$, let $\{J_b(n)\}_{b=0}^{B_n-1}$ be a disjoint partition of the history indices $\{0,\ldots,n-1\}$ into DHDC bins. 
For each bin, define the time-lag interval
\[
    I_b(n)=[\tau_b^-,\tau_b^+],
    \qquad
    \tau_b^- \le (n-j)\Delta t \le \tau_b^+
    \quad \text{for all } j\in J_b(n).
\]

The DHDC scheme specialty formulates the descent direction as an effective reweighted convolution, adopting the structural representation established in \cref{eq:DHDC-effective-conv}:
\begin{equation}\label{eq:dhdc-approx-conv}
    \widehat{\bm{g}}^{\mathrm{DHDC}}_n = \sum_{j=0}^{n-1} \widetilde{w}^{(\alpha)}_{n,j}\bm{g}_j, \qquad \widetilde{w}^{(\alpha)}_{n,j} = \sum_{b=0}^{B_n-1} \overline{k}_b^{(\alpha)}(n) \omega^{(n)}_{j,b}.
\end{equation}
The parameter $\overline{k}_b^{(\alpha)}(n)$ denotes the Caputo kernel averaged over the dyadic look-back window $I_b(n) = [\tau_b^-, \tau_b^+]$. Within this framework, the aggregation weights $\omega_{j,b}^{(n)}$ distribute the historical gradient contributions across the dyadic bins while satisfying the partition of unity property:
\begin{equation}\label{eq:partition-of-unity}
    \sum_{b=0}^{B_n-1} \omega_{j,b}^{(n)} = 1,\quad j = 0,1,\ldots,n-1.
\end{equation}

The approximation error between the DHDC direction and the full-history Caputo convolution is evaluated in the following theorem:

\begin{theorem}[\textit{DHDC convolution error bound}] \label{thm:dhdc-error-bound}
    Let $\widetilde{\bm g}_n$ be the full-history convolution in \eqref{eq:ideal-dhdc-conv}, and let $\widehat{\bm g}^{\mathrm{DHDC}}_n$ be the DHDC approximation in \eqref{eq:dhdc-approx-conv}. 
    Assume that the gradient sequence is bounded on each bin in the sense that
    \[
        R_b(n):=\sum_{j\in J_b(n)}\|\bm g_j\| < \infty .
    \]
    Then the DHDC approximation error
    \[
        \bm e_n^{\mathrm{DHDC}}
        :=
        \widehat{\bm g}^{\mathrm{DHDC}}_n-\widetilde{\bm g}_n
    \]
    satisfies
    \begin{equation}
        \|\bm e_n^{\mathrm{DHDC}}\|
        \le
        \sum_{b=0}^{B_n-1}
        \delta_b^{(\alpha)}(n) R_b(n),
        \label{eq:dhdc-error-bound-general}
    \end{equation}
    where
    \begin{equation}
        \delta_b^{(\alpha)}(n)
        :=
        \sup_{t\in I_b(n)}
        \left|
        k_\alpha(t)-\bar k_b^{(\alpha)}(n)
        \right|.
        \label{eq:dhdc-bin-error}
    \end{equation}
    Moreover, since $k_\alpha(t)$ is positive and strictly decreasing,
    \begin{equation}
        \delta_b^{(\alpha)}(n)
        \le
        k_\alpha(\tau_b^-)-k_\alpha(\tau_b^+)
        =
        \frac{(\tau_b^-)^{-\alpha}-(\tau_b^+)^{-\alpha}}
        {\Gamma(1-\alpha)}\Delta t.
        \label{eq:dhdc-explicit-bin-error}
    \end{equation}
    Consequently,
    \begin{equation}
        \|\bm e_n^{\mathrm{DHDC}}\|
        \le
        \frac{\Delta t}{\Gamma(1-\alpha)}
        \sum_{b=0}^{B_n-1}
        \left[
        (\tau_b^-)^{-\alpha}
        -
        (\tau_b^+)^{-\alpha}
        \right]
        R_b(n).
        \label{eq:dhdc-explicit-error}
    \end{equation}
\end{theorem}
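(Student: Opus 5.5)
The strategy is to write the error as one weighted sum over the history indices, then split it bin by bin. On each bin the kernel is compared with its bin average, and the monotonicity of $k_\alpha$ controls that difference.

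\textbf{Reduction to a disjoint partition.} The cleanest setting is the one fixed just before the theorem, where $\{J_b(n)\}$ is a disjoint partition of $\{0,\dots,n-1\}$. In that case I take $\omega^{(n)}_{j,b}=\mathbf 1_{\{j\in J_b(n)\}}$, which is consistent with the partition of unity \eqref{eq:partition-of-unity}. Then \eqref{eq:dhdc-approx-conv} becomes
\[
\widehat{\bm g}^{\mathrm{DHDC}}_n=\sum_{b=0}^{B_n-1}\bar k_b^{(\alpha)}(n)\sum_{j\in J_b(n)}\bm g_j .
\]
Since the bins are disjoint and cover all indices, I also regroup \eqref{eq:ideal-dhdc-conv} as
\[
\widetilde{\bm g}_n=\sum_b\sum_{j\in J_b(n)}k_\alpha((n-j)\Delta t)\bm g_j .
\]
Subtracting the two expressions gives
\[
\bm e_n^{\mathrm{DHDC}}=\sum_b\sum_{j\in J_b(n)}\big(\bar k_b^{(\alpha)}(n)-k_\alpha((n-j)\Delta t)\big)\bm g_j .
\]

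\textbf{Triangle inequality.} By construction of $J_b(n)$, every lag $(n-j)\Delta t$ with $j\in J_b(n)$ lies in $I_b(n)$. Hence each scalar factor is bounded by $\delta_b^{(\alpha)}(n)$ as defined in \eqref{eq:dhdc-bin-error}. Applying the triangle inequality yields $\|\bm e_n^{\mathrm{DHDC}}\|\le\sum_b\delta_b^{(\alpha)}(n)R_b(n)$, which is \eqref{eq:dhdc-error-bound-general}.

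\textbf{General weights.} If one insists on fractional weights $\omega^{(n)}_{j,b}$, the same argument applies provided the carry only moves mass of index $j$ into bins whose windows contain its lag. Under that proviso, $\sum_b\omega_{j,b}=1$ lets me write
\[
k_\alpha((n-j)\Delta t)=\sum_b\omega_{j,b}\,k_\alpha((n-j)\Delta t),
\]
and the bound follows with $R_b$ replaced by $\sum_j\omega_{j,b}\|\bm g_j\|$. I will state this caveat, since this localization property is the one step that is not automatic from the carry rule.

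\textbf{Explicit bin bound \eqref{eq:dhdc-explicit-bin-error}.} The bin average $\bar k_b^{(\alpha)}(n)$ in \eqref{eq:DHDC-kernel} equals $\frac{1}{\tau_b^+-\tau_b^-}\int_{I_b}k_\alpha(s)\,ds$. This holds because $\int t^{-\alpha}\,dt=t^{1-\alpha}/(1-\alpha)$, up to the common factor $\Delta t/\Gamma(1-\alpha)$. Since $k_\alpha$ is positive and strictly decreasing on $(0,\infty)$, its average lies in $[k_\alpha(\tau_b^+),k_\alpha(\tau_b^-)]$, and so does $k_\alpha(t)$ for every $t\in I_b$. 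Two numbers in the same interval differ by at most the interval's length, so $\delta_b^{(\alpha)}(n)\le k_\alpha(\tau_b^-)-k_\alpha(\tau_b^+)$. Substituting the formula for $k_\alpha$ gives the right-hand side of \eqref{eq:dhdc-explicit-bin-error}, and inserting this into \eqref{eq:dhdc-error-bound-general} gives \eqref{eq:dhdc-explicit-error}.

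\textbf{Main obstacle.} The only genuine obstacle is the mismatch between fractional carry weights and disjoint bins, treated in the general-weights paragraph. A secondary point is that endpoint clamping requires $\tau_b^->0$, which holds since every lag is at least $\Delta t$.
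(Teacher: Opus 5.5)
Your proposal is correct and follows the paper's proof step for step. Both arguments regroup the error bin by bin, bound each kernel discrepancy by $\delta_b^{(\alpha)}(n)$ via the triangle inequality, and observe that both $k_\alpha(t)$ and its bin average lie in $[k_\alpha(\tau_b^+),k_\alpha(\tau_b^-)]$; the paper phrases this last step via the mean value theorem for integrals. Your explicit specialization to $\omega^{(n)}_{j,b}=\mathbf{1}_{\{j\in J_b(n)\}}$ and your localization caveat are well placed: the paper silently makes the same restriction when it sums only over $j\in J_b(n)$ while keeping the factor $\omega^{(n)}_{j,b}$, and for the actual carry rule localization genuinely fails, since bin $0$ retains a geometrically decaying share of every past gradient.
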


\begin{proof}
Using the definitions of $\widetilde{\bm g}_n$ and $\widehat{\bm g}^{\mathrm{DHDC}}_n$, we write
\[
    \bm e_n^{\mathrm{DHDC}}
    =
    \sum_{b=0}^{B_n-1}
    \sum_{j\in J_b(n)}
    \left[
    \bar k_b^{(\alpha)}(n)
    -
    k_\alpha((n-j)\Delta t)
    \right] \omega_{j,b}^{n} \bm g_j .
\]

For every index $j \in J_b(n)$, the corresponding lag $(n-j)\Delta t$ resides within the interval $I_b(n)$. The definition of $\delta_b^{(\alpha)}(n)$ in \cref{eq:dhdc-bin-error} yields the following inequality:
\begin{equation}
    \|\bm e_n^{\mathrm{DHDC}}\|
    \le
    \sum_{b=0}^{B_n-1}
    \sum_{j\in J_b(n)}
    \left|
    \bar k_b^{(\alpha)}(n)
    -
    k_\alpha((n-j)\Delta t)
    \right|
    \|\bm g_j\|.
\end{equation}
For every index $j \in J_b(n)$, the corresponding lag $(n-j)\Delta t$ resides within the interval $I_b(n)$. This membership allows the kernel discrepancy to be controlled by the supremum of the variation:
\begin{equation}
    \left|
    \bar k_b^{(\alpha)}(n)
    -
    k_\alpha((n-j)\Delta t)
    \right|
    \le
    \delta_b^{(\alpha)}(n).
\end{equation}
Substitution of this bound into the norm inequality establishes that
\begin{equation}
    \|\bm e_n^{\mathrm{DHDC}}\|
    \le
    \sum_{b=0}^{B_n-1}
    \delta_b^{(\alpha)}(n)
    \sum_{j\in J_b(n)}\|\bm g_j\|,
\end{equation}
which completes the proof of \cref{eq:dhdc-error-bound-general}.

The Mean Value Theorem for Integrals establishes that the bin average $\bar k_b^{(\alpha)}(n)$ resides within the functional range of $k_\alpha(t)$ over the interval $I_b(n)$. Since $k_\alpha(t)$ is positive and strictly decreasing on $(0, \infty)$, this average value is constrained by the kernel evaluated at the endpoints:
\begin{equation}
    k_\alpha(\tau_b^+) \le \bar k_b^{(\alpha)}(n) \le k_\alpha(\tau_b^-).
\end{equation}
Consequently, for any $t \in I_b(n)$, the distance between the kernel and its average satisfies the following inequality:
\begin{equation}
    \left|k_\alpha(t)-\bar k_b^{(\alpha)}(n)\right| \le k_\alpha(\tau_b^-)-k_\alpha(\tau_b^+).
\end{equation}

Evaluation of the boundary difference using the explicit definition of the kernel $k_\alpha(t) = \frac{t^{-\alpha}}{\Gamma(1-\alpha)}\Delta t$ yields:
\begin{equation}
    k_\alpha(\tau_b^-)-k_\alpha(\tau_b^+) = \frac{(\tau_b^-)^{-\alpha}-(\tau_b^+)^{-\alpha}}{\Gamma(1-\alpha)}\Delta t.
\end{equation}
This identity establishes the validity of \cref{eq:dhdc-explicit-bin-error}, which in turn completes the proof of the total error bound in \cref{eq:dhdc-explicit-error}.

\end{proof}

The previous estimate can be specialized to dyadic DHDC bins. Rather than
seeking an \(N\)-only rate, the dyadic structure is used here to isolate
the dependence on the time step. This shows that, for a fixed training
index and a fixed dyadic depth, the DHDC approximation discrepancy
vanishes as the step size decreases.

\begin{corollary}[\textit{Fixed-index step-size decay of the dyadic-bin DHDC error}]
\label{cor:dhdc-dyadic-step-consistency}
    Assume that the DHDC bins satisfy the dyadic sequence
    \[
        I_b(n)
        =
        [t_n-2^{b+1}\Delta t,\ t_n-2^b\Delta t],
        \qquad
        b=0,\ldots,B_n-1,
    \]
    where \(B_n\in\mathbb N\) is chosen such that \(2^{B_n}<n\). Let
    \[
        \tau_b^- = t_n - 2^{b+1}\Delta t,
        \qquad
        \tau_b^+ = t_n - 2^b\Delta t.
    \]
    Suppose that the gradients satisfy
    \[
        \|\bm g_j\|\le G_n,
        \qquad
        j=0,1,\ldots,n-1,
    \]
    with \(G_n\) independent of \(\Delta t\), and assume that \(n\le N\).
    Then
    \begin{equation}
        \|\bm e_n^{\mathrm{DHDC}}\|
        \le
        \frac{
            \alpha N G_n
        }{\Gamma(1-\alpha)}
        (2^{B_n}-1)
        (n-2^{B_n})^{-\alpha-1}
        \Delta t^{1-\alpha}.
        \label{eq:dhdc-dyadic-step-consistency}
    \end{equation}
    Consequently, for fixed discrete indices \(n\), \(N\), fixed dyadic depth
    \(B_n\), and a \(\Delta t\)-independent gradient bound \(G_n\),
    \[
        \|\bm e_n^{\mathrm{DHDC}}\|
        =
        \mathcal O(\Delta t^{1-\alpha})
        \qquad
        \text{as } \Delta t\to0.
    \]
\end{corollary}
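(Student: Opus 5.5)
We start from the explicit bound \eqref{eq:dhdc-explicit-error} of Theorem~\ref{thm:dhdc-error-bound} and specialize it to the dyadic windows. The only work is to estimate two factors separately and then sum over the bins:
\begin{itemize}
\item the kernel variation $(\tau_b^-)^{-\alpha}-(\tau_b^+)^{-\alpha}$ on each window;
\item the gradient mass $R_b(n)$ in each bin.
\end{itemize}

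\textbf{Kernel variation.} With $\tau_b^- = (n-2^{b+1})\Delta t$ and $\tau_b^+ = (n-2^b)\Delta t$, we factor out the time step:
\[
(\tau_b^-)^{-\alpha}-(\tau_b^+)^{-\alpha}=\Delta t^{-\alpha}\big[(n-2^{b+1})^{-\alpha}-(n-2^b)^{-\alpha}\big].
\]
The mean value theorem for $s\mapsto s^{-\alpha}$ on $[n-2^{b+1},\,n-2^b]$ bounds the bracket by $\alpha\,2^b\,\xi^{-\alpha-1}$ for some $\xi\ge n-2^{b+1}$. Multiplying by the prefactor $\Delta t/\Gamma(1-\alpha)$ from \eqref{eq:dhdc-explicit-error} produces exactly the factor $\Delta t^{1-\alpha}$ together with $\alpha/\Gamma(1-\alpha)$.

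\textbf{Gradient mass.} We use the crude bound $R_b(n)\le \sum_{j=0}^{n-1}\|\bm g_j\|\le nG_n\le NG_n$. This holds because the $J_b(n)$ are disjoint subsets of $\{0,\dots,n-1\}$ and $n\le N$; it is where the factor $N G_n$ comes from.

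\textbf{Summation.} Using $\xi^{-\alpha-1}\le (n-2^{B_n})^{-\alpha-1}$ uniformly in $b$, we sum the factors $2^b$ to get
\[
\sum_{b=0}^{B_n-1}2^b = 2^{B_n}-1,
\]
which is the claimed estimate \eqref{eq:dhdc-dyadic-step-consistency}. The $\mathcal O(\Delta t^{1-\alpha})$ statement then follows immediately, since every other factor depends only on $n$, $N$, $B_n$ and $G_n$.

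\textbf{Main obstacle.} The delicate step is the uniform lower bound on $\xi$. The mean value point satisfies only $\xi\ge n-2^{b+1}$, and for the top bin $b=B_n-1$ this gives $n-2^{B_n}$, which is positive by the hypothesis $2^{B_n}<n$. For smaller $b$ we have $n-2^{b+1}\ge n-2^{B_n}$, so the worst case is the top bin, as required. One must also check that all lags stay positive, i.e.\ $\tau_b^->0$, so that the kernel and its variation are finite. This holds exactly because $2^{b+1}\le 2^{B_n}<n$, and it is what the hypothesis on $B_n$ is for.
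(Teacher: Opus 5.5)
Your proof is correct and follows the paper's argument: you specialize the explicit bound of Theorem~\ref{thm:dhdc-error-bound} to the dyadic lags, factor out $\Delta t^{1-\alpha}$, bound each $R_b(n)$ crudely by $NG_n$, and control the kernel differences with the mean value theorem for $x^{-\alpha}$. The only difference is the order of two steps, and it is cosmetic. The paper first telescopes $\sum_{b}[(n-2^{b+1})^{-\alpha}-(n-2^b)^{-\alpha}]$ to $(n-2^{B_n})^{-\alpha}-(n-1)^{-\alpha}$ and applies the mean value theorem once on $[n-2^{B_n},n-1]$, whereas you apply it bin by bin and sum $\sum_b 2^b=2^{B_n}-1$; both give the identical constant, and your explicit check that $\tau_b^{-}>0$ follows from $2^{B_n}<n$ is a detail the paper leaves implicit.
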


\begin{proof}
    From the bin-wise estimate in \cref{thm:dhdc-error-bound}, we have
    \[
        \|\bm e_n^{\mathrm{DHDC}}\|
        \le
        \sum_{b=0}^{B_n-1}
        \delta_b^{(\alpha)}(n)
        \sum_{j\in J_b(n)}
        \|\bm g_j\|.
    \]
    By the definition of the bin-wise approximation coefficient,
    \[
        \delta_b^{(\alpha)}(n)
        \le
        \frac{
            (\tau_b^-)^{-\alpha}
            -
            (\tau_b^+)^{-\alpha}
        }{\Gamma(1-\alpha)}
        \Delta t.
    \]
    Since
    \[
        \tau_b^-
        =
        t_n-2^{b+1}\Delta t
        =
        (n-2^{b+1})\Delta t,
        \qquad
        \tau_b^+
        =
        t_n-2^b\Delta t
        =
        (n-2^b)\Delta t,
    \]
    we obtain
    \[
    \begin{aligned}
        (\tau_b^-)^{-\alpha}
        -
        (\tau_b^+)^{-\alpha}
        &=
        \Delta t^{-\alpha}
        \left[
            (n-2^{b+1})^{-\alpha}
            -
            (n-2^b)^{-\alpha}
        \right].
    \end{aligned}
    \]
    Therefore,
    \[
    \begin{aligned}
        \|\bm e_n^{\mathrm{DHDC}}\|
        &\le
        \sum_{b=0}^{B_n-1}
        \frac{
            \Delta t^{-\alpha}
            \left[
                (n-2^{b+1})^{-\alpha}
                -
                (n-2^b)^{-\alpha}
            \right]
        }{\Gamma(1-\alpha)}
        \Delta t
        \sum_{j\in J_b(n)}
        \|\bm g_j\| .
    \end{aligned}
    \]
    Using the uniform gradient bound and \(n\le N\), we have
    \[
        \sum_{j\in J_b(n)}
        \|\bm g_j\|
        \le
        nG_n
        \le
        NG_n.
    \]
    Hence,
    \[
    \begin{aligned}
        \|\bm e_n^{\mathrm{DHDC}}\|
        &\le
        \frac{
            \Delta t^{1-\alpha}NG_n
        }{\Gamma(1-\alpha)}
        \sum_{b=0}^{B_n-1}
        \left[
            (n-2^{b+1})^{-\alpha}
            -
            (n-2^b)^{-\alpha}
        \right].
    \end{aligned}
    \]
    The summation telescopes:
    \[
    \begin{aligned}
        \sum_{b=0}^{B_n-1}
        \left[
            (n-2^{b+1})^{-\alpha}
            -
            (n-2^b)^{-\alpha}
        \right]
        &=
        (n-2^{B_n})^{-\alpha}
        -
        (n-1)^{-\alpha}.
    \end{aligned}
    \]
    Hence,
    \[
        \|\bm e_n^{\mathrm{DHDC}}\|
        \le
        \frac{
            \Delta t^{1-\alpha}NG_n
        }{\Gamma(1-\alpha)}
        \left[
            (n-2^{B_n})^{-\alpha}
            -
            (n-1)^{-\alpha}
        \right].
    \]
    Now let
    \[
        f(x)=x^{-\alpha}.
    \]
    Since $f'(x)=-\alpha x^{-\alpha-1}$, the mean value theorem applied on $[n-2^{B_n},\, n-1]$ gives
    \[
    \begin{aligned}
        (n-2^{B_n})^{-\alpha}
        -
        (n-1)^{-\alpha}
        &\le
        \alpha(2^{B_n}-1)
        (n-2^{B_n})^{-\alpha-1}.
    \end{aligned}
    \]
    Therefore,
    \[
        \|\bm e_n^{\mathrm{DHDC}}\|
        \le
        \frac{
            \alpha N G_n
        }{\Gamma(1-\alpha)}
        (2^{B_n}-1)
        (n-2^{B_n})^{-\alpha-1}
        \Delta t^{1-\alpha}.
    \]
    Since \(0<\alpha<1\), we have \(1-\alpha>0\). Thus, for fixed
    \(n\), \(N\), \(B_n\), and \(G_n\),
    \[
        \Delta t^{1-\alpha}\to0
        \qquad
        \text{as } \Delta t\to0.
    \]
    Consequently,
    \[
        \|\bm e_n^{\mathrm{DHDC}}\|
        =
        \mathcal O(\Delta t^{1-\alpha})
        \qquad
        \text{as } \Delta t\to0.
    \]
\end{proof}

\begin{remark}\leavevmode 
    \begin{enumerate}
    \item The estimate in \cref{thm:dhdc-error-bound} shows that the DHDC error is governed by two factors: the variation of the Caputo kernel within each compressed bin and the accumulated gradient magnitude over that bin. Hence, the DHDC scheme should be understood as a controlled compression of the full Caputo convolution rather than as an exact replacement.

    \item In particular, for a fixed dyadic ratio, the relative variation of the kernel inside each bin does not vanish automatically. Therefore, an arbitrarily small approximation error requires additional control, such as adaptive refinement of the bins, exact storage of recent gradients, or smoothness and cancellation in the gradient history.

    \end{enumerate}
\end{remark}

\subsection{Perturbation stability of fractional gradient descent}
\label{subsec:dhdc-convergence-preservation}


In this subsection, we show that the convergence properties of the full discrete convolution FGD are preserved whenever the perturbation \(\bm e_n\) remains below the admissible stability threshold.

\begin{theorem}[\textit{Convergence preservation under fast-memory approximation}]
    \label{thm:approx_fast_memory_convergence}
    Assume that \(L:\mathbb{R}^d\to\mathbb{R}\) is \(\mu\)-strongly convex and has
    \(L\)-Lipschitz continuous gradient.
    Suppose that the ideal fractional direction \(\widetilde{\bm{g}}_n\) satisfies
    \begin{equation}\label{eq:ideal_correlation_assump}
        \langle \bm{g}_n,\widetilde{\bm{g}}_n\rangle
        \ge \gamma w_0 \|\bm{g}_n\|^2
    \end{equation}
    for some \(\gamma\in(0,1)\), as established in Lemma~\ref{lem:correlation}, and that there exists a constant \(C_F>0\) such that
    \begin{equation}\label{eq:ideal_dir_bound}
        \|\widetilde{\bm{g}}_n\|\le C_F \|\bm{g}_n\|.
    \end{equation}
    Assume further that the approximation error satisfies
    \begin{equation}
        \|\bm{e}_n\|\le \rho \|\bm{g}_n\|
        \label{eq:error_relative_bound}
    \end{equation}
    for some \(\rho\ge 0\).
    
    Then the practical update
    \begin{equation}
        \bm{\theta}_{n+1}
        =
        \bm{\theta}_n - \eta \widehat{\bm{g}}_n
        \label{eq:practical_fast_update}
    \end{equation}
    satisfies the descent estimate
    \begin{equation}
        L(\bm{\theta}_{n+1})
        \le
        L(\bm{\theta}_n)
        -
        \eta
        \left(
        \gamma w_0 - \rho - \frac{L\eta}{2}(C_F+\rho)^2
        \right)
        \|\bm{g}_n\|^2.
        \label{eq:approx_descent}
    \end{equation}
    
    In particular, if
    \begin{equation}
        \rho < \gamma w_0
        \label{eq:rho_condition}
    \end{equation}
    and
    \begin{equation}
        0<\eta<
        \frac{2(\gamma w_0-\rho)}{L(C_F+\rho)^2},
        \label{eq:eta_condition_approx}
    \end{equation}
    then \(L(\bm{\theta}_n)\) decreases monotonically.
    Moreover,
    \begin{equation}
        L(\bm{\theta}_{n})-L(\bm{\theta}^\star)
        \le
        \left(
        1-2\mu\eta
        \left(
        \gamma w_0 - \rho - \frac{L\eta}{2}(C_F+\rho)^2
        \right)
        \right)^n
        \big(L(\bm{\theta}_0)-L(\bm{\theta}^\star)\big),
        \label{eq:approx_linear_rate}
    \end{equation}
    so the approximate FGD with a fast-memory algorithm converges linearly.
\end{theorem}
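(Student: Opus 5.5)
The argument is a perturbation of the descent analysis in Section~\ref{subsec:conv_FGD}, with $\widehat{\bm g}_n=\widetilde{\bm g}_n+\bm e_n$ from \eqref{eq:approx_frac_dir} in place of $\widetilde{\bm g}_n$.

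\textbf{Step 1: descent lemma.} Start from the standard descent inequality implied by $L$-smoothness, applied to the practical update \eqref{eq:practical_fast_update}:
\[
L(\bm\theta_{n+1})\le L(\bm\theta_n)-\eta\langle \bm g_n,\widehat{\bm g}_n\rangle+\tfrac{L\eta^2}{2}\|\widehat{\bm g}_n\|^2 .
\]

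\textbf{Step 2: the two terms.} Split the inner product as $\langle \bm g_n,\widetilde{\bm g}_n\rangle+\langle \bm g_n,\bm e_n\rangle$.
\begin{itemize}
\item The first piece is bounded below by $\gamma w_0\|\bm g_n\|^2$ using \eqref{eq:ideal_correlation_assump}.
\item The second is bounded below by $-\|\bm g_n\|\|\bm e_n\|\ge-\rho\|\bm g_n\|^2$, by Cauchy--Schwarz and \eqref{eq:error_relative_bound}.
\item For the quadratic term, the triangle inequality together with \eqref{eq:ideal_dir_bound} and \eqref{eq:error_relative_bound} gives $\|\widehat{\bm g}_n\|\le (C_F+\rho)\|\bm g_n\|$, so the quadratic term is at most $\tfrac{L\eta^2}{2}(C_F+\rho)^2\|\bm g_n\|^2$.
\end{itemize}
Collecting these three bounds yields \eqref{eq:approx_descent} directly.

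\textbf{Step 3: monotone decrease.} Write $c(\eta):=\gamma w_0-\rho-\tfrac{L\eta}{2}(C_F+\rho)^2$. Under \eqref{eq:rho_condition} and \eqref{eq:eta_condition_approx}, $c(\eta)>0$, so $L(\bm\theta_{n+1})\le L(\bm\theta_n)$, with strict decrease whenever $\bm g_n\neq 0$.

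\textbf{Step 4: linear rate.} For the rate \eqref{eq:approx_linear_rate}, subtract $L(\bm\theta^\star)$ from both sides and invoke the Polyak--Łojasiewicz inequality $\|\bm g_n\|^2\ge 2\mu(L(\bm\theta_n)-L(\bm\theta^\star))$, which follows from $\mu$-strong convexity as in Section~\ref{subsec:conv_FGD}. Since $\eta c(\eta)>0$, this gives
\[
E_{n+1}\le(1-2\mu\eta c(\eta))E_n, \qquad E_n:=L(\bm\theta_n)-L(\bm\theta^\star).
\]
Iterating this recursion gives the claimed bound.

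\textbf{Main obstacle.} The only genuinely delicate point is confirming that the contraction factor $1-2\mu\eta c(\eta)$ is nonnegative, so that the iterated bound is meaningful and represents a true contraction.
\begin{itemize}
\item Positivity of $E_{n+1}$ already forces $2\mu\eta c(\eta)\le 1$ whenever $E_n>0$.
\item To see this directly, use quadratic growth and smoothness: $2\mu E_n\le\|\bm g_n\|^2\le 2L E_n$ gives $\mu\le L$.
\item It then suffices to check $\eta c(\eta)\le 1/(2\mu)$, by bounding $\eta\gamma w_0$ via the step-size restriction, similarly to the argument for $\kappa$ in Section~\ref{subsec:conv_FGD}.
\end{itemize}
If the direct check is awkward, I would simply note that $0\le E_{n+1}$ forces the factor to be nonnegative whenever $E_n>0$; in the case $E_n=0$ the bound holds trivially.
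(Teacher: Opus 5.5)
Your proposal is correct and follows the paper's proof essentially step for step. The steps match: the $L$-smoothness descent inequality, the split of $\langle \bm g_n,\widehat{\bm g}_n\rangle$ with Cauchy--Schwarz, the triangle-inequality bound $\|\widehat{\bm g}_n\|\le (C_F+\rho)\|\bm g_n\|$, and then the Polyak--{\L}ojasiewicz inequality for the rate.

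Your extra check that $1-2\mu\eta c(\eta)\ge 0$ covers a point the paper skips, and your fallback argument via $0\le E_{n+1}\le (1-2\mu\eta c(\eta))E_n$ settles it cleanly. The ``direct'' route using only the step-size bound gives just $2\mu\eta c(\eta)<4$. It closes only if you maximize $\eta c(\eta)$ over $\eta$ and use $\gamma w_0\le C_F$, which follows from combining the two assumptions on $\widetilde{\bm g}_n$.
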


\begin{proof}
Using the smoothness of \(L\),
\begin{equation}
L(\bm{\theta}_{n+1})
\le
L(\bm{\theta}_n)
-
\eta \langle \bm{g}_n,\widehat{\bm{g}}_n\rangle
+
\frac{L\eta^2}{2}\|\widehat{\bm{g}}_n\|^2.
\label{eq:proof_smoothness}
\end{equation}
Substituting \eqref{eq:approx_frac_dir} gives
\[
\langle \bm{g}_n,\widehat{\bm{g}}_n\rangle
=
\langle \bm{g}_n,\widetilde{\bm{g}}_n\rangle
+
\langle \bm{g}_n,\bm{e}_n\rangle.
\]
By \eqref{eq:ideal_correlation_assump} and the Cauchy--Schwarz inequality,
\[
\langle \bm{g}_n,\widehat{\bm{g}}_n\rangle
\ge
\gamma w_0 \|\bm{g}_n\|^2
-
\|\bm{g}_n\|\,\|\bm{e}_n\|.
\]
Using \eqref{eq:error_relative_bound},
\begin{equation}
\langle \bm{g}_n,\widehat{\bm{g}}_n\rangle
\ge
(\gamma w_0-\rho)\|\bm{g}_n\|^2.
\label{eq:proof_inner}
\end{equation}

Similarly,
\[
\|\widehat{\bm{g}}_n\|
\le
\|\widetilde{\bm{g}}_n\|+\|\bm{e}_n\|
\le
(C_F+\rho)\|\bm{g}_n\|,
\]
where we used \eqref{eq:ideal_dir_bound} and \eqref{eq:error_relative_bound}.
Hence
\begin{equation}
\|\widehat{\bm{g}}_n\|^2
\le
(C_F+\rho)^2 \|\bm{g}_n\|^2.
\label{eq:proof_norm}
\end{equation}

Substituting \eqref{eq:proof_inner} and \eqref{eq:proof_norm} into \eqref{eq:proof_smoothness}
yields
\[
L(\bm{\theta}_{n+1})
\le
L(\bm{\theta}_n)
-
\eta
\left(
\gamma w_0-\rho-\frac{L\eta}{2}(C_F+\rho)^2
\right)
\|\bm{g}_n\|^2,
\]
which proves \eqref{eq:approx_descent}.

If \eqref{eq:rho_condition} and \eqref{eq:eta_condition_approx} hold, the coefficient of
\(\|\bm{g}_n\|^2\) is strictly positive, and the loss decreases monotonically.

Finally, strong convexity implies the Polyak--Łojasiewicz inequality
\[
\|\bm{g}_n\|^2
\ge
2\mu \big(L(\bm{\theta}_n)-L(\bm{\theta}^\star)\big),
\]
which, combined with \eqref{eq:approx_descent}, gives \eqref{eq:approx_linear_rate}.
\end{proof}

\begin{corollary}[\textit{Implication for SOE and DHDC}]
\label{cor:SOE_DHDC_convergence}

Suppose that the SOE or DHDC approximation satisfies
\[
\|\bm{e}_n\|\le \rho \|\bm{g}_n\|, \qquad \rho<\gamma w_0.
\]
Then the corresponding fast-memory FGD method inherits the descent property
and linear convergence of the ideal fractional method up to the perturbation level \(\rho\).

In particular, smaller approximation error enlarges the admissible step-size range
and yields a convergence rate closer to that of the ideal Caputo-consistent direction.

\end{corollary}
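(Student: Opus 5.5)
The corollary follows directly from Theorem~\ref{thm:approx_fast_memory_convergence}, applied with $\bm e_n$ taken to be the specific perturbation produced by the chosen fast-memory scheme. For DHDC we set $\bm e_n := \bm e_n^{\mathrm{DHDC}} = \widehat{\bm g}^{\mathrm{DHDC}}_n - \widetilde{\bm g}_n$. For SOE we set $\bm e_n := \widehat{\bm g}^{SOE}_n - \widetilde{\bm g}_n$. In either case $\widehat{\bm g}_n = \widetilde{\bm g}_n + \bm e_n$ has exactly the form \eqref{eq:approx_frac_dir}.

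The standing hypotheses on the ideal direction are inherited unchanged. The correlation bound \eqref{eq:ideal_correlation_assump} comes from Lemma~\ref{lem:correlation} under its step-size restriction. The boundedness \eqref{eq:ideal_dir_bound} follows from \eqref{eqn:conv_ineq3} together with the stability ratio $\bar\rho$, giving $C_F = \sqrt{C_\alpha}\,\bar\rho$. The corollary's hypothesis $\|\bm e_n\|\le\rho\|\bm g_n\|$ is precisely \eqref{eq:error_relative_bound}, and $\rho<\gamma w_0$ is \eqref{eq:rho_condition}.

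Invoking the theorem then gives the descent estimate \eqref{eq:approx_descent}. For any $\eta$ in the range \eqref{eq:eta_condition_approx}, the loss decreases monotonically and converges at the linear rate \eqref{eq:approx_linear_rate}. One should also check that the contraction factor lies in $[0,1)$. Write $c(\eta,\rho) := \gamma w_0-\rho-\tfrac{L\eta}{2}(C_F+\rho)^2$. Positivity of $c$ follows from the step-size condition. For the factor $1-2\mu\eta\,c(\eta,\rho)$ to be nonnegative, we may either note that it bounds a nonnegative ratio $E_{n+1}/E_n$, or shrink $\eta$ as in the full-memory analysis of Section~\ref{subsec:conv_FGD}.

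The "in particular" clause is a monotonicity observation about these formulas. The admissible upper bound $2(\gamma w_0-\rho)/\bigl(L(C_F+\rho)^2\bigr)$ is strictly decreasing in $\rho$, because its numerator decreases and its denominator increases. So a smaller $\rho$ enlarges the admissible step-size range. For fixed $\eta$, the quantity $c(\eta,\rho)$ also decreases in $\rho$, so the contraction factor $1-2\mu\eta\,c(\eta,\rho)$ decreases as $\rho\downarrow0$. In the limit $\rho=0$ it reduces to the ideal, unperturbed rate, which is the sense in which the fast-memory method inherits the ideal behaviour "up to the perturbation level $\rho$".

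The main obstacle, though not part of the formal implication, is justifying the relative-error hypothesis $\|\bm e_n\|\le\rho\|\bm g_n\|$ for concrete schemes. Theorem~\ref{thm:dhdc-error-bound} and Corollary~\ref{cor:dhdc-dyadic-step-consistency} bound $\|\bm e_n^{\mathrm{DHDC}}\|$ by quantities of the form $\sum_b\delta_b R_b(n)\lesssim \Delta t^{1-\alpha} N G_n$. Combining this with $G_n\le\bar\rho\|\bm g_n\|$ turns it into a relative bound whose constant $\rho$ is small when $\Delta t$ is small. An analogous argument for SOE would use the uniform kernel error $\varepsilon$ from \eqref{eq:soe-approx} together with $\bar\rho$. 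Since the corollary takes the relative bound as a hypothesis, this step is only a remark on when the hypothesis holds and does not have to be proved.
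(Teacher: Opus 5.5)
Your proposal is correct and follows the paper's route: the paper gives no separate proof and treats the corollary as an immediate specialization of Theorem~\ref{thm:approx_fast_memory_convergence}, with $\bm e_n$ taken to be the SOE or DHDC error, and your reading of how the step-size bound $2(\gamma w_0-\rho)/\bigl(L(C_F+\rho)^2\bigr)$ and the rate factor depend monotonically on $\rho$ is exactly the content of the ``in particular'' clause. One caution on your closing remark, which is not needed for the proof: the Caputo weights, including $w_0$, carry the same factor $\Delta t^{1-\alpha}$ as the DHDC error bound in Corollary~\ref{cor:dhdc-dyadic-step-consistency}, so the resulting ratio $\rho/(\gamma w_0)$ does not depend on $\Delta t$, and refining the step does not by itself secure $\rho<\gamma w_0$.
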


\section{Numerical Experiments}\label{sec:numerical_experiments}

We evaluate the SOE and the newly proposed DHDC schemes, comparing them against the standard full-history DC method. The SOE implementation requires $\mathcal{O}(M)$ memory and $\mathcal{O}(M)$ operations per iteration, where $M$ denotes the number of exponential modes. The decay parameters $\{\xi_m\}$ are selected on a logarithmic grid over the interval $[1/t_{\max},1/t_{\min}]$, and the weights $\{\omega_m\}$ are obtained by least-squares fitting of the Caputo kernel over $t\in[t_{\min},t_{\max}]$. This strategy preserves the long-memory structure while enabling recursive updates.

The DHDC implementation achieves $\mathcal{O}(\log n)$ memory and computational cost at iteration $n$ by hierarchically aggregating past gradients. The bin-averaged kernel admits an analytic expression, ensuring that positivity and monotonic decay are retained.
When $\alpha \to 1$, the logarithmic limit form of the kernel average is used to maintain numerical stability. To ensure stable descent, the learning rate $\eta$ is chosen according to the convergence condition derived in Section~\ref{sec:convergence}.
In practice, a safe heuristic over a finite training horizon is
\[
    \eta \approx c\,\frac{w_0/L}{\sum_{k=0}^{N} w_k}, \qquad c\in(0,1],
\]
which guarantees monotone decrease under smooth strong convexity. To assess the performance of the proposed SOE and DHDC, we consider one optimization benchmark problem and three representative engineering and learning tasks: battery state-of-health estimation, stochastic signal regression, and CIFAR-10 image classification.

\begin{table}[H]
    \centering
    \caption{Training configurations for each benchmark problem.}
    \begin{tabular}{lccccccc}
        \toprule
        task    & optimizer & loss          & adaptive LR & architecture & batch      & epochs$\times$iterations \\
        \midrule
        Rastrigin & None & Rastrigin function & Yes & MLP & full-batch & 10000$\times$1 \\
        battery & RMSprop   & MSE-based     & Yes         & MLP          & full-batch & 1000$\times$1   \\
        signal  & Adam      & MSE-based     & Yes         & TCN          & 128        & 1$\times$150000 \\
        image   & Adam      & Cross-entropy & Yes         & ResNet       & 256        & 25$\times$195   \\
        \bottomrule
    \end{tabular}
\end{table}

For the stochastic signal regression task, each iteration is treated as an individual epoch to maintain consistency in our performance tracking.

\subsection{Rastrigin Benchmark Problem}\label{sec:rastri}


We consider the Rastrigin benchmark optimization problem to investigate the convergence behavior and optimization dynamics of the proposed FGD methods. The Rastrigin function is widely used as a challenging nonconvex benchmark due to its highly oscillatory landscape containing numerous local minima surrounding a unique global minimum.

The baseline optimizers considered in this experiment are standard gradient descent (GD), momentum-based gradient descent, and Adam. These methods are compared against three distinct variants of FGD, namely the DC, SOE, and DHDC schemes. All optimizers share identical hyperparameter settings whenever applicable. The momentum coefficient is fixed at $0.9$, and the fractional order is set to $\alpha = 0.5$ unless stated otherwise.

The Rastrigin function itself is used directly as the loss function. Because of its large number of local minima, standard gradient-based optimizers typically encounter two competing difficulties: large learning rates often lead to instability or divergence, whereas small learning rates frequently cause the optimization trajectory to become trapped in suboptimal local minima. To address this issue, an adaptive learning-rate strategy is employed throughout the experiment. The learning rate is gradually reduced once the optimization trajectory approaches the vicinity of the global minimizer.



The Rastrigin function is defined as follows
\begin{equation}\label{eq:rastri}
    f(\mathbf{x}) = An + \sum_{i=1}^{n} \left[ x_i^2 - A \cos(2\pi x_i) \right], \quad A = 10, \ n = 10.
\end{equation}
\begin{equation}
    \mathbf{x} = [x_1,...,x_n]
\end{equation}
The initial point is chosen at a suitable location that does not coincide with any local minima, as $\textbf{x}^0=[2.22,...,2.22]$
The optimization point is defined as the location at which the loss function~\eqref{eq:rastri} attains zero, as $\textbf{x}^\ast=[0.0,...,0.0], \ f(\textbf{x}^\ast)=0$.

\begin{equation}
\mathcal{L}_{\mathrm{Rastrigin}}
=
\left|
f(\mathbf{x})
\right|
.
\label{eq:loss_rastrigin}
\end{equation}

\subsection{Battery State-of-Health Estimation}\label{sec:battery}

To illustrate the capability of the proposed FGD framework, we consider battery state-of-health and capacity estimation, a central problem in lithium-ion energy storage systems.
Battery degradation is governed by diffusion and charge-transfer processes, which exhibit long-memory behavior and are well captured by fractional-order equivalent circuit models (FO-ECMs).

\begin{table}[h]
    \centering
    \caption{variable table in Battery State-of-Health Estimation}
    \label{tab:variables}
    \begin{tabular}{cl} 
        \toprule
        \textbf{sign} & \textbf{means}  \\
        \midrule
        $V(t)$ & Voltage at the time \textit{t}  \\
        $I(t)$ & Current at the time \textit{t}  \\
        $z(t)$ & State-of-charge(SoC) at the time \textit{t}  \\
        $C_1$ & Fractional capacitance \\
        $R_1$ & Polarization resistance \\
        $R_0$ & Ohmic resistance \\
        $v_1$ & Fractional side-brach voltage \\
        $Q$ & Nominal Capacity \\
        $\alpha$ & fractional differentiation order \\
        \bottomrule
    \end{tabular}
\end{table}


In general, an FO-ECM replaces the classical capacitor relation with a fractional constitutive law of the form
\begin{equation}\label{eq:fo-ecms}
I(t)=C_1\,{}^{C}D_t^\alpha V(t)+\frac{1}{R_1}V(t),
\quad 0<\alpha<1,
\end{equation}
thereby introducing power-law memory into the voltage–current dynamics. In the FO-ECM \eqref{eq:fo-ecms}.


In the adopted FO-ECM, the fractional side-branch voltage $v_1(t)$
satisfies the Caputo fractional differential equation
\begin{equation}
{}^{C}D_t^\alpha v_1(t)
=
-\frac{1}{R_1 C_1} v_1(t)
+
\frac{1}{C_1} I(t),
\qquad 0<\alpha<1,
\label{eq:ocvv1}
\end{equation}
The terminal voltage is modeled as
\begin{equation}
V(t)
=
\mathrm{OCV}(z(t))
-
R_0 I(t)
-
v_1(t),
\end{equation}
where $R_0$ represents the ohmic resistance,
$\mathrm{OCV}(z)$ denotes the open-circuit voltage as a function of the state-of-charge (SoC) $z(t)$.
The SoC evolves according to Coulomb counting:
\begin{equation}
z(t+\Delta t)
=
z(t)
-
\frac{I(t)\Delta t}{3600\,Q},
\end{equation}

Figure \ref{fig:loss_decomp} illustrates the complete training pipeline, showing how the input measurements, trainable FO-ECM parameters, and the neural network jointly produce the voltage prediction and fractional residual, which are subsequently combined with physics-based regularization to form the unified battery loss.

\begin{figure}[H]
\centering
\begin{tikzpicture}[
  scale=0.9,
  transform shape,
  font=\footnotesize,
  >=Latex,
  line cap=round,
  line join=round,
  arrow/.style={->, line width=0.75pt},
  box/.style={
    draw, rounded corners=4pt,
    minimum height=10mm,
    align=center,
    line width=0.6pt,
    inner xsep=6pt, inner ysep=4pt
  }
]

\node[box, fill=green!10, minimum width=46mm] (data) at (5.7,5)
{Input data \\ $I(t),\,V_{\mathrm{true}}(t),\,\Delta t$};

\node[box, fill=gray!10, minimum width=54mm] (params) at (7,1)
{Trainable components \\[2pt]
$\mathrm{Neural Network}(z)$ \\[-1pt]
$Q,\;R_0,\;R_1,\;C_1,\;\alpha$};

\node[box, fill=orange!8, minimum width=44mm] (soc) at (13,2)
{SoC update \\[2pt]
$z_{n+1}=z_n-\dfrac{I_n\,\Delta t}{3600\,Q}$};

\node[box, fill=red!8, minimum width=44mm] (residual) at (1,-0.2)
{Fractional residual \\[2pt]
$\mathcal{R}_\alpha(t_i)
=
{}^{C}D_t^\alpha V
+\dfrac{1}{R_1C_1}V
-\dfrac{1}{C_1}I$ \\[2pt]
(SOE / DHDC FGD evaluation)};

\node[box, fill=red!8, minimum width=44mm] (vpred) at (13,-0.3)
{Voltage prediction \\[2pt]
$V_{\mathrm{pred}}
=
\mathrm{OCV}(z)-R_0 I - v_1$};

\node[box, fill=yellow!10, minimum width=60mm] (loss) at (7.4,-2.8)
{Loss \\[4pt]
$\displaystyle
\mathcal{L}_{\mathrm{battery}}
=
\frac{1}{N}\sum_{i=1}^{N}
\left(V_{\mathrm{pred}}(t_i)-V_{\mathrm{true}}(t_i)\right)^2
+
\frac{1}{N}\sum_{i=1}^{N}
\mathcal{R}_\alpha(t_i)^2
+
10^{-6}\mathcal{R}_{\mathrm{phys}}
$};

\node[box, fill=yellow!6, minimum width=60mm] (reg) at (7.4,-4.5)
{Physical regularization \\[4pt]
$\mathcal{R}_{\mathrm{phys}}
=
(Q-2.5)^2
+
(R_0-0.03)^2
+
\left(\max(\alpha-0.85,0)\right)^2$};

\draw[arrow] (data) -- (params);
\draw[arrow] (params) -- (soc);
\draw[arrow] (data) -- (soc);

\draw[arrow] (params) -- (residual);
\draw[arrow] (data) -- (residual);

\draw[arrow] (soc) -- (vpred);
\draw[arrow] (params) -- (vpred);
\draw[arrow] (data) -- (vpred);

\draw[arrow] (vpred) -- (loss);
\draw[arrow] (residual) -- (loss);
\draw[arrow] (reg) -- (loss);

\end{tikzpicture}
\caption{Battery State Estimation Flowchart Combining Neural Network and FO-ECM}
\label{fig:loss_decomp}
\end{figure}

This network consists of three hidden layers of width $32$ with $\tanh$ activation,
providing a flexible representation of nonlinear voltage–SoC relationships.
The FO-ECM \eqref{eq:ocvv1} parameters
$(Q, R_0, R_1, C_1, \alpha)$
are treated as trainable variables.
To preserve physical admissibility,
positivity and bound constraints are enforced through suitable reparameterization.

Let $\{t_i\}_{i=1}^N$ denote the discrete sampling times.
The battery loss is defined as
\begin{equation}
\mathcal{L}_{\mathrm{battery}}
=
\frac{1}{N}\sum_{i=1}^{N}
\left(
V_{\mathrm{pred}}(t_i)-V_{\mathrm{true}}(t_i)
\right)^2
+
\frac{1}{N}\sum_{i=1}^{N}
\left(
\mathcal{R}_\alpha(t_i)
\right)^2
+
10^{-6}\,\mathcal{R}_{\mathrm{phys}}.
\label{eq:loss_battery}
\end{equation}
The first term measures the discrepancy between predicted and measured terminal voltage.
The second term penalizes deviations from the fractional dynamical constraint
\begin{equation}
\mathcal{R}_\alpha(t_i)
=
{}^{C}D_t^\alpha v_1(t_i)
+
\frac{1}{R_1C_1}v_1(t_i)
-
\frac{1}{C_1}I(t_i),
\label{eq:fractional_residual}
\end{equation}
thereby enforcing consistency with the underlying FO-ECM \eqref{eq:ocvv1}.
Here, ${}^{C}D_t^\alpha v_1(t_i)$ is discretized by the $L1$ scheme; see Appendix~\ref{Appendix:A}.
The regularization term
\begin{equation}
\mathcal{R}_{\mathrm{phys}}
=
(Q-2.5)^2
+
(R_0-0.03)^2
+
\left(\max(\alpha-0.85,0)\right)^2,
\label{eq:reg_battery}
\end{equation}
discourages physically unrealistic parameter drift.
The small weighting factor $10^{-6}$ of \eqref{eq:loss_battery} ensures that regularization \eqref{eq:reg_battery} stabilizes
the optimization without overwhelming the data-fitting objective.




Battery training pipeline under the proposed FGD framework. Input measurements are mapped to trainable FO-ECM parameters and the neural network. 

\subsection{Stochastic Signal Estimation}\label{sec:signal}
To evaluate the effectiveness of fractional memory in a controlled setting,
we consider the estimation of a stochastic signal with long-range temporal dependence.
Many real-world time series exhibit correlations that decay slowly over time,
a phenomenon that cannot be adequately captured by memoryless optimization dynamics.
This setting provides a natural testbed for examining whether the intrinsic
power-law memory of FGD
offers advantages over classical gradient descent.


The synthetic signal is generated using a fractional linear filter
\[
    c_k = c_{k-1}\frac{(k-1)+d}{k},
    \qquad c_0=1,
    \qquad d=0.1,
\]
where $\varepsilon_t \sim \mathcal{N}(0,\sigma^2)$ with $\sigma=10$.
The coefficients $\{c_k\}$ decay algebraically, so that $c_k \sim k^{d-1}$ as $k\to\infty$. Signals value at time t is determined at \eqref{eq:sig}. 
\begin{equation}\label{eq:sig}
    y_{true}(t) = \sum_{k=0}^{\infty} c_k\,\varepsilon_{t-k},
\end{equation}
This power-law decay induces persistent correlations, and the autocovariance of $y_{true}(t)$ decreases slowly rather than exponentially.
Consequently, the present state of the signal depends on information from the distant past.



Figure \ref{fig:tcn} illustrates the architecture of the TCN used in the signal estimation task. The task consists of stacked causal convolutional blocks with exponentially increasing dilation factors $d = 1, 2, 4, 8, 16$. Each block applies two causal convolution layers with kernel size $k=3$, followed by $\tanh$ activation and dropout, and maintains 64 feature channels throughout the network\cite{TCNECG}. The left-sided padding ensures that each output at time $t$ depends only on past inputs, thereby preventing future leakage. The use of dilation enlarges the receptive field without increasing network depth, enabling the task to capture long-range temporal dependencies efficiently. The final layer aggregates the hierarchical temporal features to produce the output sequence. This architecture allows for stable and parallelizable training while preserving strict temporal causality.


\begin{figure}[H]
    \centering
    \includegraphics[width=0.5\linewidth]{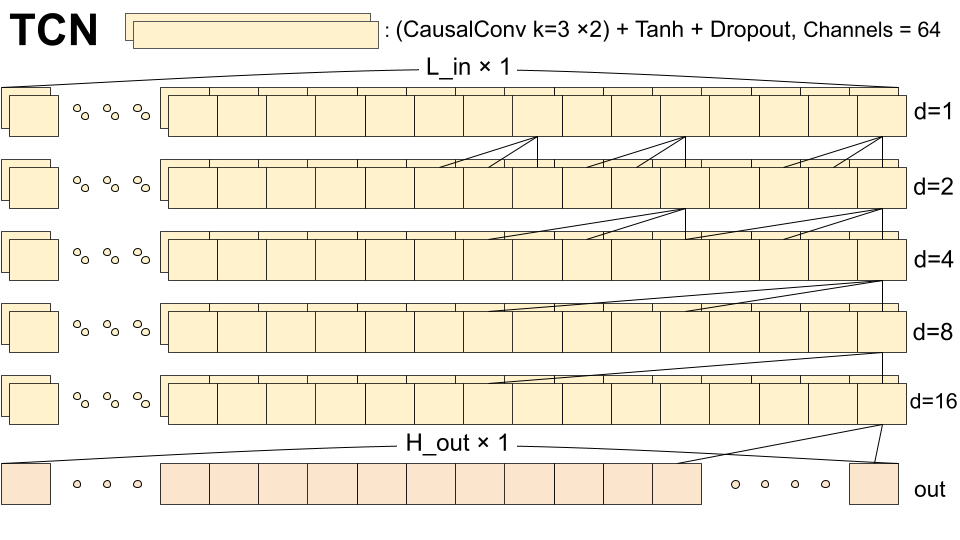}
    \caption{Schematic diagram of TCN}
    \label{fig:tcn}
\end{figure}

Let $\{t_i\}_{i=1}^{N}$ denote the discrete time indices.
The first term in \eqref{eq:loss_signal} represents the empirical reconstruction error, measuring the discrepancy between the predicted and observed signals. Let the input tensor be denoted as $\mathbf{X} \in \mathbb{R}^{B \times L \times D}$, where $B$ is the batch size, $L = 512$ is the look-back window length, and $D = 1$ is the number of input features. For each sample $i$ within a batch, the input sequence $\mathbf{x}^{(i)}$ is defined as:$$y_t = y_{true}(t)\qquad\mathbf{x}^{(i)} = [y_{t-L}, y_{t-L+1}, \dots, y_{t-1}]^\top.$$The objective of the task is to learn a mapping function $f: \mathbb{R}^{L \times D} \to \mathbb{R}$ that minimizes the discrepancy between the predicted value $y_{pred}$ and the ground truth $y_{true}$:$$y_{pred}(t_i) = f(\mathbf{x}^{(i)}; \theta),$$where $\theta$ represents the trainable parameters of the architecture.
The unified loss for stochastic signal estimation is defined as
\begin{equation}\label{eq:loss_signal}
    \mathcal{L}_{\mathrm{signal}}
    =
    \frac{1}{N}\sum_{i=1}^{N}
    \left(
    y_{\mathrm{pred}}(t_i)
    -
    y_{\mathrm{true}}(t_i)
    \right)^2
    +
    \lambda
    \frac{1}{N}\sum_{i=1}^{N}
    \left(
    {}^{C}D_t^\alpha y_{\mathrm{pred}}(t_i)
    \right)^2,
\end{equation}
where $\lambda = 10^{-3}$.


The second term penalizes the squared Caputo fractional derivative of the predicted signal,
thereby enforcing fractional dynamical consistency. In locally stationary regimes, the fractional derivative should remain small.
Minimizing this term promotes smooth temporal behavior
while respecting the intrinsic long-memory structure of the signal.
The weighting parameter $\lambda = 10^{-3}$
controls the trade-off between predictive fidelity and fractional smoothness.

Through the formulation \eqref{eq:loss_signal},
the loss function integrates data fitting and fractional regularization
within a unified variational framework.
Because both the data-generating process and the regularization term
exhibit power-law memory,
the training objective aligns naturally with the optimization dynamics
introduced by FGD.



\subsection{CIFAR-10 Image Classification}\label{sec:image}

To evaluate the proposed FGD framework in a high-dimensional nonconvex setting, we consider image classification on the CIFAR-10 dataset. This experiment allows us to investigate whether fractional optimization provides benefits beyond regression-type tasks and extends to deep convolutional architectures.

Unlike the stochastic signal experiment, where long-range dependence is embedded in the data-generating process, the CIFAR-10 task provides a different perspective: here, memory is not inherent in the input data but arises in the optimization landscape itself. Deep convolutional networks induce highly nonconvex loss surfaces characterized by sharp minima, saddle regions, and stochastic gradient noise introduced by mini-batch training.

We adopt a ResNet-18 architecture as the backbone task.
ResNet is introduced to address the degradation problem observed in very deep convolutional networks, where increasing depth leads to training instability and performance saturation. Instead of directly learning a transformation $H(x)$, each residual block learns a residual mapping
\begin{equation}
    F(x) = H(x) - x,
\end{equation}
so that the block output becomes
\begin{equation}
    y = x + F(x).
\end{equation}

This identity-based skip connection improves gradient propagation and mitigates vanishing or exploding gradient issues. 
As a result, deep networks maintain stable optimization behavior, making ResNet an appropriate architecture for isolating the effects of the optimization scheme.

Figure \ref{fig:resnet} illustrates the ResNet-18 architecture used in our experiments.

\begin{figure}[H]
    \centering
    \includegraphics[width=0.65\linewidth]{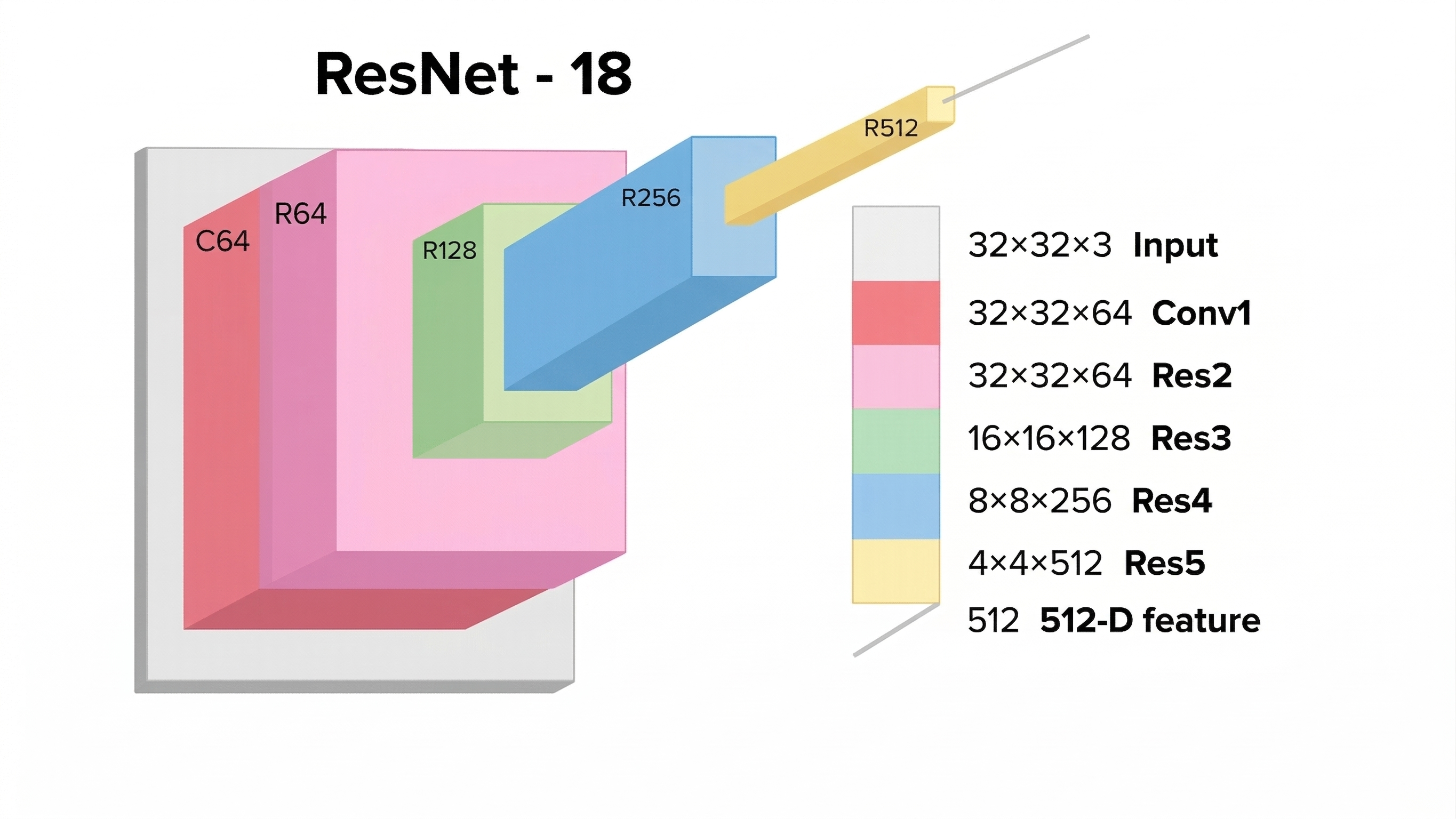}
    \caption{Schematic diagram of ResNet-18}
    \label{fig:resnet}
\end{figure}

Let $\mathbf{z}_i$ denote the logits for sample $i$, 
and let $\mathbf{p}_i = \mathrm{softmax}(\mathbf{z}_i)$, and $\mathbf{y}_i$ be the one-hot encoded ground-truth label, $\mathbf{y}_i^{\mathrm{rand}}$ a randomly permuted label vector
They are available each $C=10$ classes. Therefore, they are defined by the class and sample indices, which are 
denoted respectively as subscripts. 
Then, The mixed cross-entropy loss is defined as
\begin{equation}
\mathcal{L}_{\mathrm{ce}}
=
\frac{1}{N}
\sum_{i=1}^{N}
\left[
\beta
\left(
- \sum_{c=1}^{C}
y_{i,c}\log p_{i,c}
\right)
+
(1-\beta)
\left(
- \sum_{c=1}^{C}
y^{\mathrm{rand}}_{i,c}\log p_{i,c}
\right)
\right].
\label{eq:loss_image_ce}
\end{equation}

This formulation \eqref{eq:loss_image_ce} combines supervision from the true labels with a randomly permuted label distribution. 
The mixing coefficient $\beta \in [0,1]$ controls the contribution of each term, acting as a regularizer to prevent overconfident predictions.

To further improve robustness, we introduce a symmetric Kullback–Leibler (KL) regularization term. Let $\mathbf{q}_i$ denote a noise-perturbed prediction obtained from a stochastic forward pass. 

$\mathbf{q}_i$ is specifically defined as:

\begin{enumerate}
    \item Geometric transformations : Random cropping with reflective padding and horizontal flipping.
    \item AutoAugment : The CIFAR-10 policy is applied for automated strategy selection.
    \item Noise \& Occlusion : We further inject Gaussian noise ($\sigma=0.10$) and apply Random Erasing ($p=0.5$) to encourage the model to learn invariant features even under severe information loss.
\end{enumerate}



The symmetric KL divergence is defined as

\begin{equation}\label{eq:loss_image_kl}
    \mathcal{L}_{\mathrm{KL}}
    =
    \frac{1}{N}
    \sum_{i=1}^{N}
    \left[
    \sum_{c=1}^{C}
    p_{i,c}\log\frac{p_{i,c}}{q_{i,c}}
    +
    \sum_{c=1}^{C}
    q_{i,c}\log\frac{q_{i,c}}{p_{i,c}}
    \right].
\end{equation}

The final classification objective is therefore

\begin{equation}\label{eq:loss_image}
    \mathcal{L}_{\mathrm{image}}
    =
    \mathcal{L}_{\mathrm{ce}}
    +
    0.05\,\mathcal{L}_{\mathrm{KL}}.
\end{equation}

The symmetric KL term discourages excessive divergence between stochastic forward passes, promoting stable probability distributions during training.


Training is performed using mini-batches to control memory usage, making the experiments feasible on standard personal computing hardware. 
Although mini-batch training reduces per-iteration memory cost, it introduces gradient noise, thereby providing a realistic nonconvex stochastic optimization environment.


\section{Numerical Results}\label{sec:numerical_result}





This section presents the empirical results for the various benchmark tasks introduced in Section \ref{sec:numerical_experiments}. We evaluate the performance of our proposed methods across these tasks using specific neural network architectures and loss functions.

First, the Rastrigin Benchmark Problem \ref{sec:rastri} is utilized as a fundamental case to analyze the behavioral differences between FGD and standard GD within complex local minima environments. Subsequently, the battery \ref{sec:battery}, signal \ref{sec:signal}, and image \ref{sec:image} tasks are presented as representative real-world applications. Through the numerical results of these real-world tasks—namely battery, signal, and image processing—we evaluate and compare the performance of our proposed fast algorithms, DHDC and SOE, against standard GD and the first-order Taylor series approximation. Furthermore, details regarding the efficiency of these fast algorithms, specifically their theoretical time complexity and actual execution times observed in real-world problems, are provided in \ref{Appendix:timecom}.

\subsection{Optimization Dynamics and Empirical Computational Complexity on the Rastrigin Function}\label{subsec:rastrigin}

In this subsection, we present the numerical results for the Rastrigin benchmark function previously introduced in \cref{sec:rastri}. Given that the Rastrigin landscape is characterized by a high density of local minima, it serves as an ideal environment to examine the behavior of each optimizer within a highly nonconvex setting.

\begin{figure}[H]
    \centering
    \includegraphics[width=0.5\linewidth]{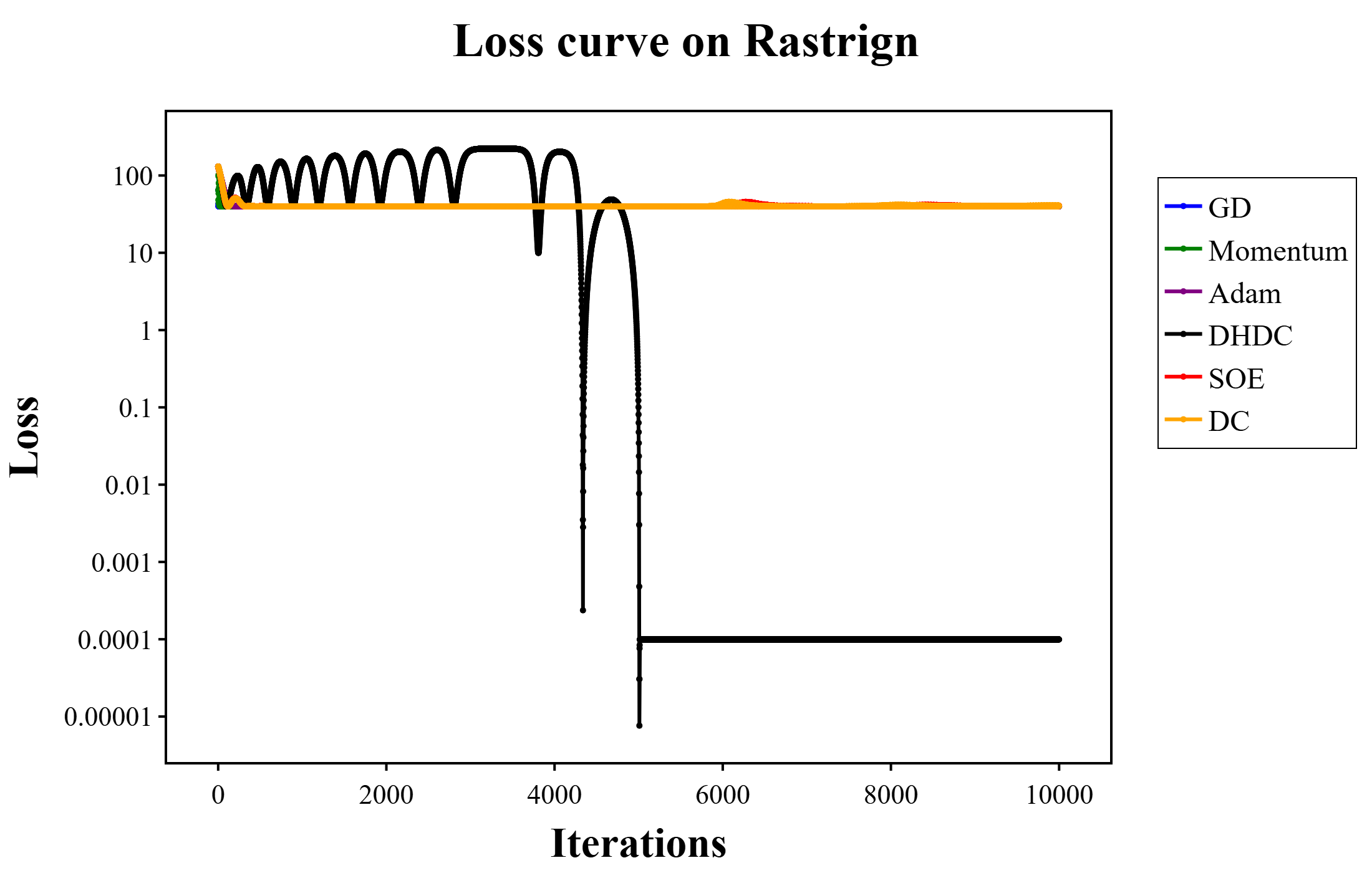}
    \caption{Per-iteration loss on the Rastrigin function.}
    \label{fig:rastrigin}
\end{figure}

\begin{figure}[H]
    \centering
    \includegraphics[width=0.47\linewidth]{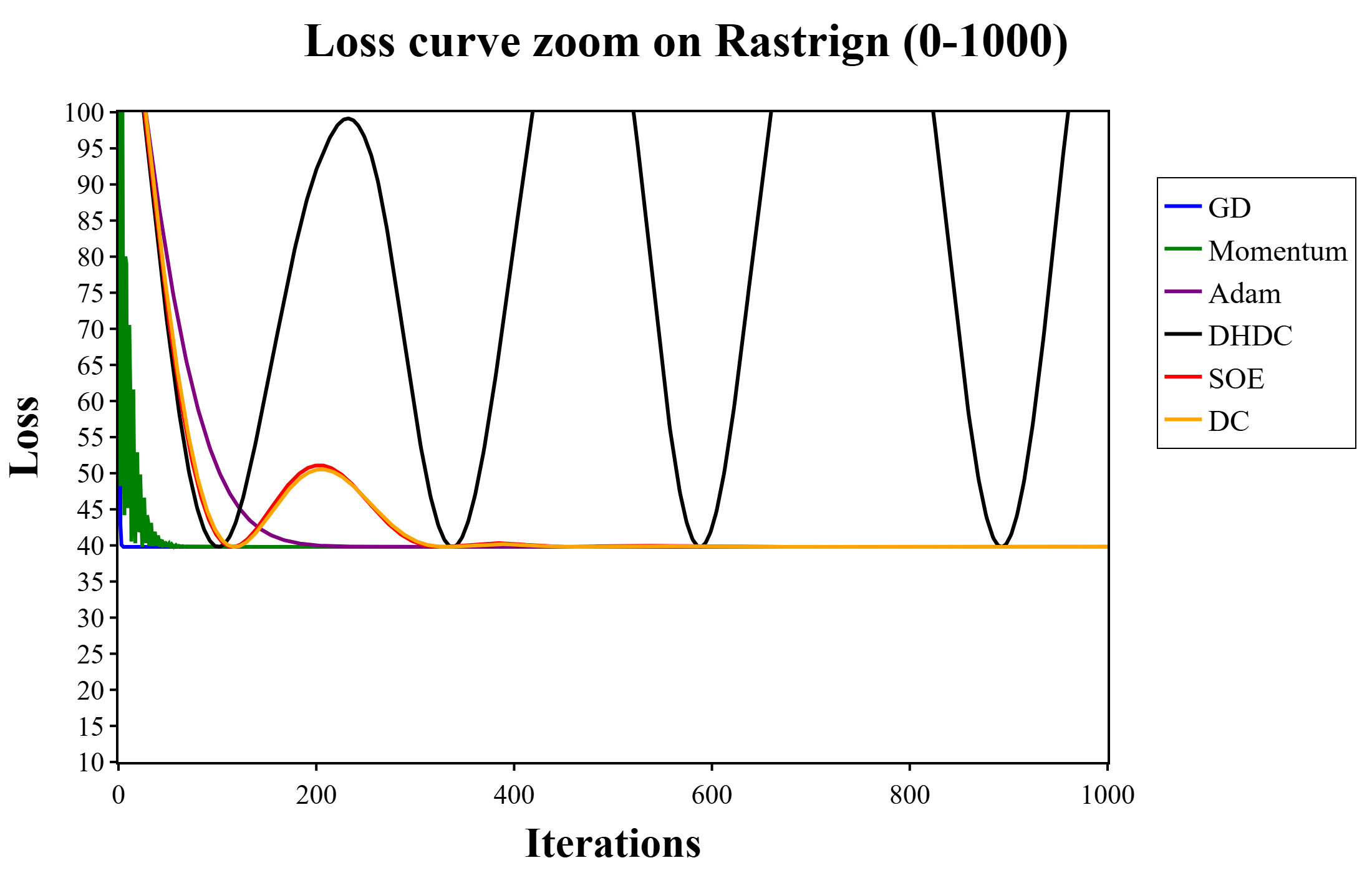}
    \includegraphics[width=0.47\linewidth]{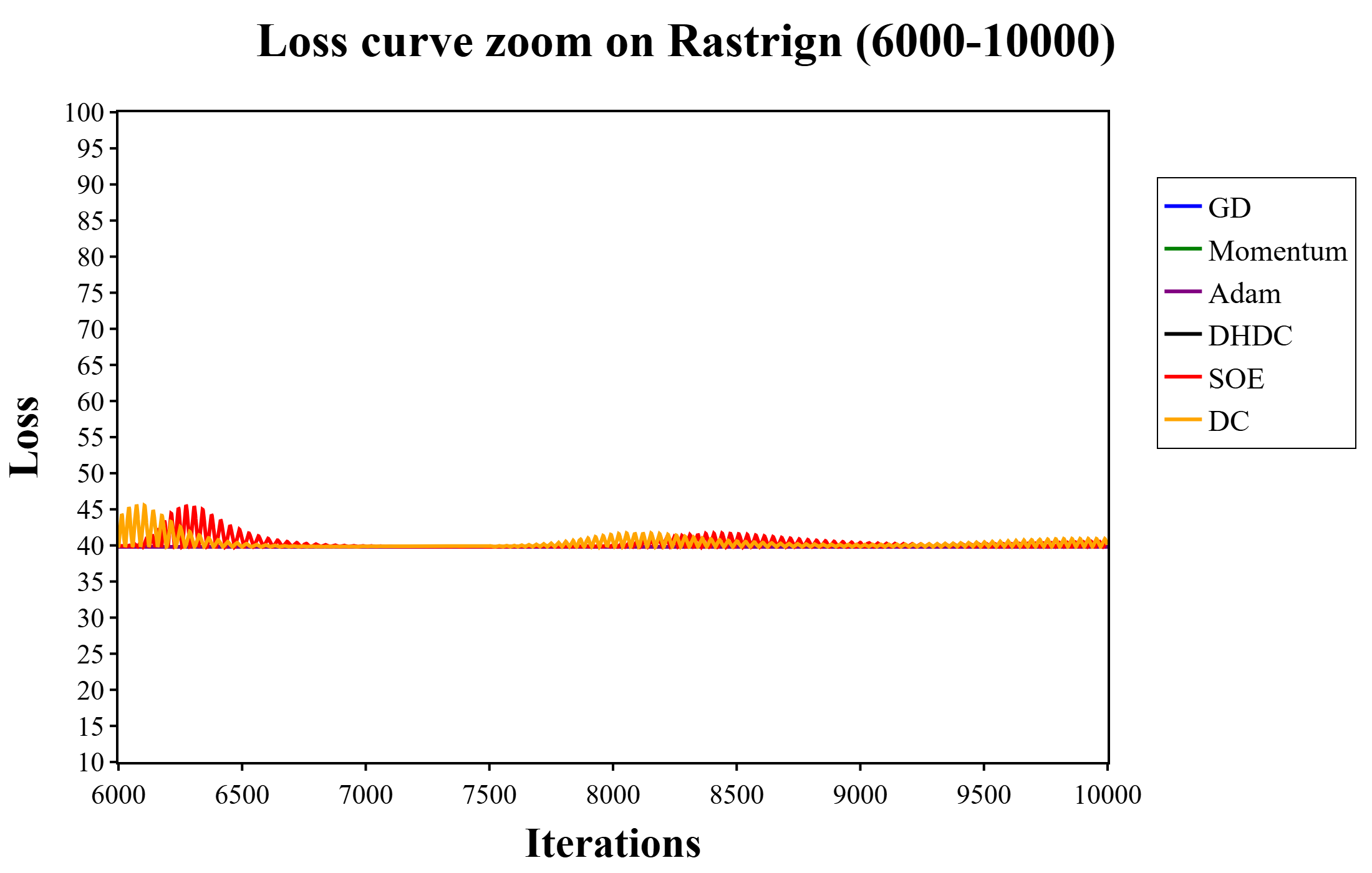}
    \caption{Per-iteration loss on the zoomed Rastrigin function.}
    \label{fig:rastrigin_zoom}
\end{figure}

Figure~\ref{fig:rastrigin} illustrates the loss trajectories obtained for this task. While all other compared methods rapidly reduce the loss in the early stages only to eventually settle into local minima, the proposed DHDC continues to move across multiple local minima, ultimately approaching the global minimum. 

A more detailed analysis of these dynamics is provided in Figure~\ref{fig:rastrigin_zoom}. The left panel offers a magnified view of the early-stage dynamics (within the ranges $10 \le L \le 100$ and $0 \le n \le 1000$), while the right panel highlights the later-stage behavior ($10 \le L \le 100$ and $6\times 10^3 \le n \le 10^4$). Although the specific trajectories vary, a clear trend emerges: while most methods become trapped relatively early, DHDC continues to explore the landscape for a significantly longer duration, avoiding premature convergence.



In Figure \ref{fig:gradrastri}, Its behavior appears to be closely related to the persistent oscillatory dynamics induced by the compressed long-memory structure
In particular, we see that the FGD methods maintain larger gradient norms than standard GD throughout most of the optimization process. Unlike conventional GD methods, where the gradient norm rapidly decays near stationary points, DHDC continues to exhibit a moderate but non-negligible level of gradient activity even after reaching low-loss regions. As a result, the optimization trajectory does not become prematurely stationary and can repeatedly escape shallow local minima.
This persistent but bounded oscillatory behavior allows DHDC to continue exploring the optimization landscape while still stabilizing near the global minimizer. These results suggest that the compressed memory dynamics introduced by DHDC can be especially effective in highly multimodal optimization problems, where conventional methods often suffer from premature convergence.

\begin{figure}[H]
    \centering
    \includegraphics[width=0.5\linewidth]{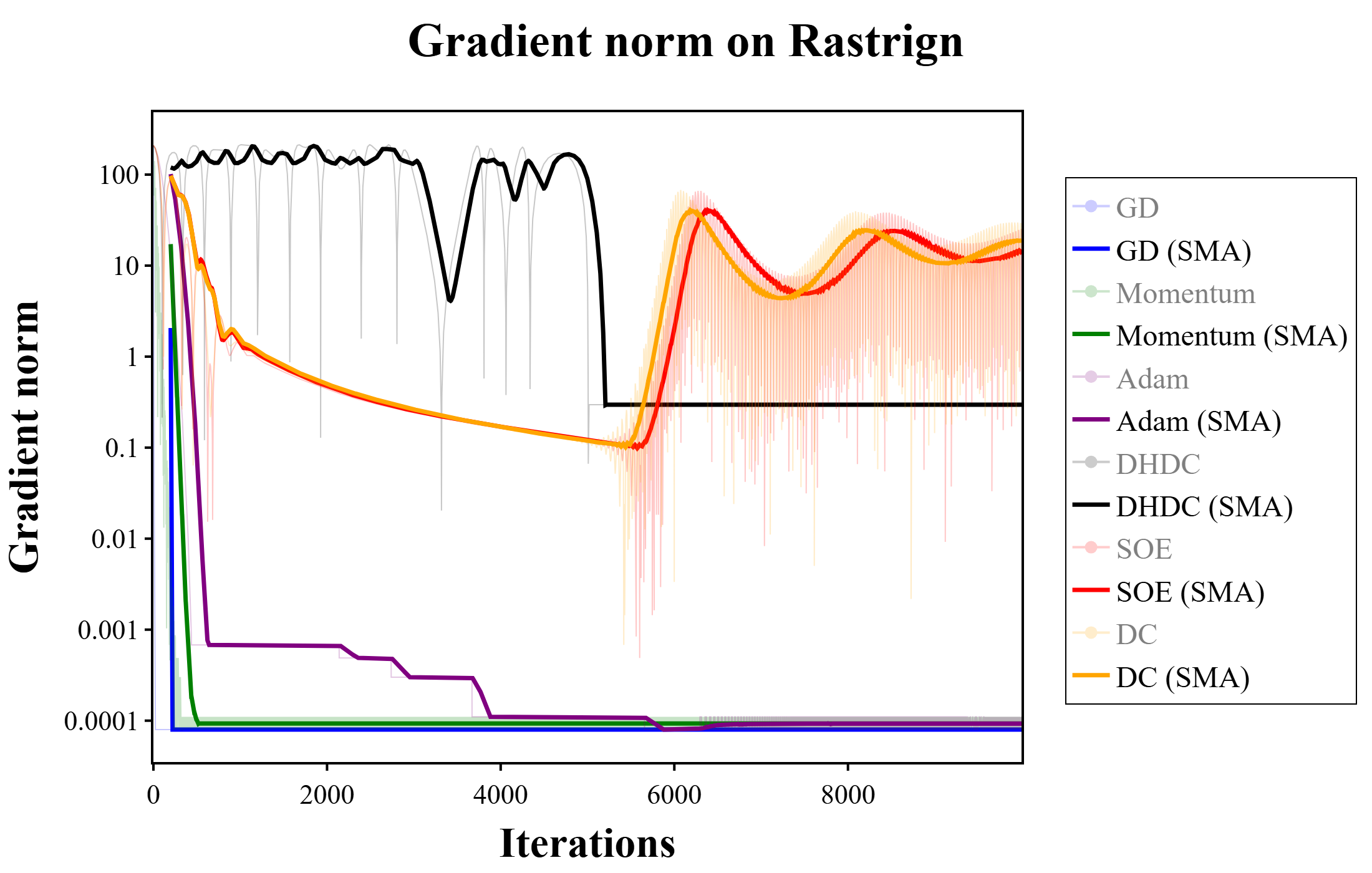}
    \caption{Per-iteration gradient norm. SMA means simple moving average}
    \label{fig:gradrastri}
\end{figure}

The oscillatory behavior induced by fractional memory can improve exploration when sufficiently controlled, but excessive oscillation may also destabilize the optimization trajectory. To investigate this effect, we measure the magnitude of oscillation while varying the fractional order \(\alpha\) from \(0.1\) to \(0.9\) in increments of \(0.1\).
The oscillation indicator is defined by

\begin{equation}\label{eq:oscill}
    \mathcal{O} = \frac{1}{N-1} \sum_{n=1}^{N-1}\left| \mathcal{L}(\bm{\theta}_{n+1})-\mathcal{L}(\bm{\theta}_n)\right|,
\end{equation}
where $\mathcal{L}(\bm{\theta}_i)$ denotes the evaluation of the $f(\mathbf{x})$ at the $i$-th iteration in \eqref{eq:rastri}.
\begin{figure}[H]
    \centering
    \includegraphics[width=0.5\linewidth]{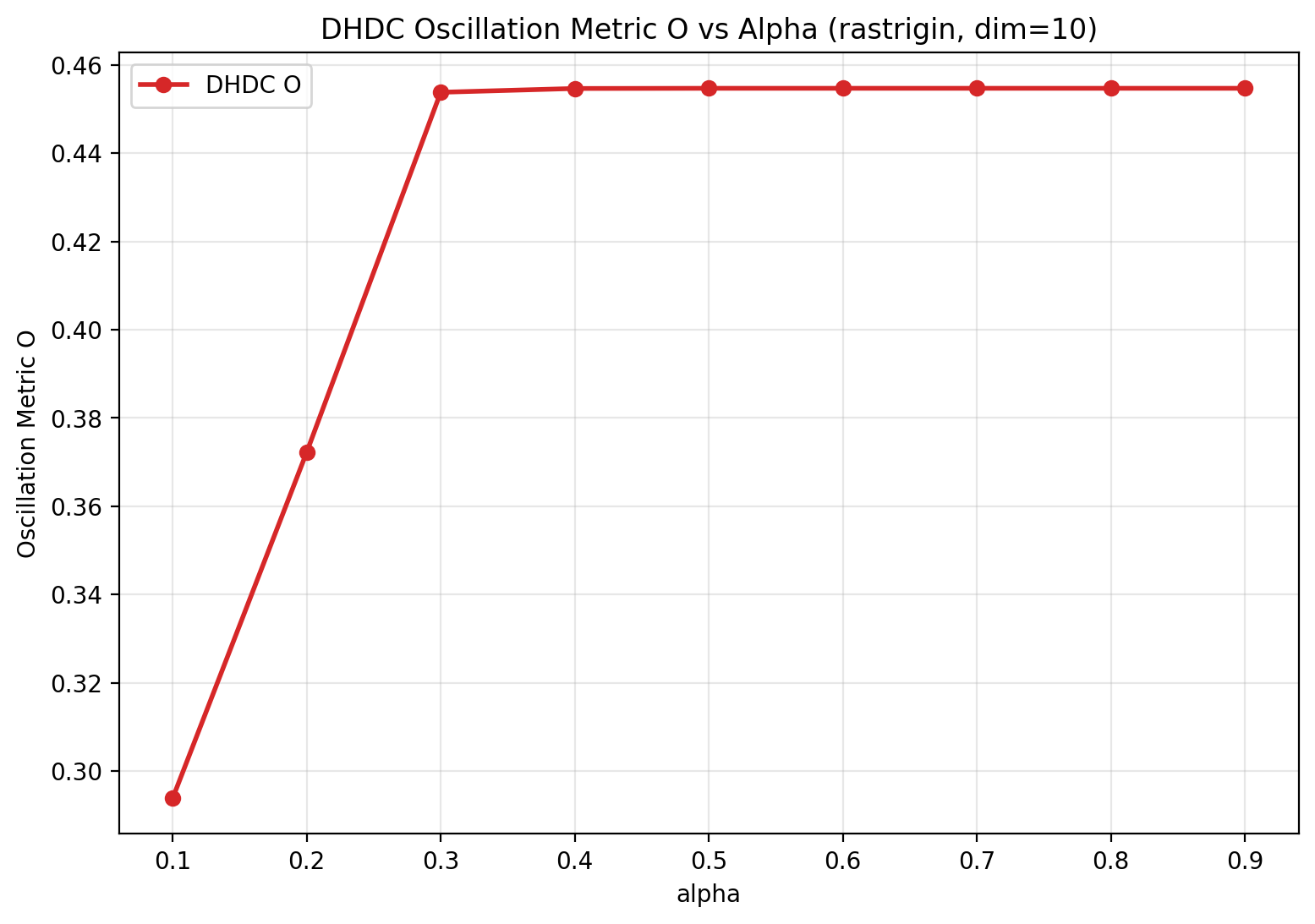}
    \caption{Values of $\mathcal{O}$ as a function of the fractional order. As $\alpha$ varies, $\mathcal{O}$ does not diverge, and it exhibits the convergence trend as $\alpha$ increases.}
    \label{fig:ocill}
\end{figure}





The Fig.~\ref{fig:ocill} shows that the oscillation magnitude remains bounded once the fractional order becomes moderately large. This reflects the transition toward more localized gradient behavior similar to classical gradient descent.
On the other hand, when \(\alpha\) becomes smaller than approximately \(0.3\), the strong history-averaging effect substantially attenuates the effective gradient magnitude. As a result, the oscillatory component of the optimization dynamics is also weakened, leading to smaller values of \(\mathcal{O}\).

\subsubsection{Experimental difference between SOE and DC}\label{par:soedc}

Since SOE is a fast algorithm proposed based on the DC framework, the numerical results presented here demonstrate that there is a negligible difference between SOE and DC in terms of accuracy. In this regard, from the perspective of loss and accuracy, SOE and DC can be considered equivalent methods, as the discrepancy between them falls within mathematically permissible error margins. Consequently, in Section \ref{par:soedc}, we demonstrate the methodological equivalence of SOE and DC by utilizing the results obtained from the Rosenbrock benchmark problem. To evaluate the numerical fidelity of the SOE approximation, we compare its performance with the standard DC method using the Rosenbrock function, which is a standard benchmark characterized by its curved and ill-conditioned valley. It is defined as follows:

\begin{equation}\label{eq:rosen}
    f(\mathbf{x}) = \sum_{i=1}^{n-1} \left[ 100(x_{i+1} - x_i^2)^2 + (1 - x_i)^2 \right] \quad n = 10
\end{equation}




The global minimizer $\mathbf{x}^\star$ and the initial point $\mathbf{x}^{(0)} = [x_1^{(0)},x_2^{(0)},\ldots,x_n^{(0)}]$ for the numerical test are defined as:
\begin{align}
    \mathbf{x}^\star &= [1.0, \dots, 1.0]^\top, \quad f(\mathbf{x}^\star) = 0, \label{mini} \\
    x_i^{(0)} &= \begin{cases} 
    -1.2 & \text{if } i \text{ is odd} \\
    1.0 & \text{if } i \text{ is even}
    \end{cases} \quad (i = 1, 2, \dots, n). \label{ini}
\end{align}




\begin{table}[htbp]
    \centering
    \begin{tabular}{cc}
        \toprule
        Epoch & Loss Gap \\
        \midrule
        100 & 0.02462 \\
        200 & 0.02196 \\
        300 & 0.00992 \\
        400 & 0.00264 \\
        500 & 0.00020 \\
        600 & 0.00059 \\
        700 & 0.00003 \\
        800 & 0.00003 \\
        900 & 0.00003 \\
        \bottomrule
    \end{tabular}
    \caption{Loss gap between SOE and DC across epochs.}
    \label{tab:soel1}
\end{table}

Table~\ref{tab:soel1} presents the results regarding the difference between the SOE and DC methods. The results reveal that the absolute difference in loss between the two schemes remains within a strictly admissible numerical bound from the early stages of training. Notably, as the optimization proceeds, this gap diminishes and eventually converges toward zero, reaching a negligible magnitude of $3.0 \times 10^{-5}$ by epoch 700. These findings provide a numerical justification for replacing the computationally intensive DC method with SOE in real-world applications, as SOE achieves nearly identical accuracy with significantly higher efficiency.


\subsection{Optimization of Battery State-of-Health Estimation}\label{subsec:battery}

This subsection presents the numerical results for the battery State-of-Health estimation task introduced in \cref{sec:battery}. As observed in the results, the battery estimation task exhibits oscillations during the early stages of training, which gradually diminish as the optimization proceeds. Such oscillatory behavior is often observed in systems with long-memory effects. In this experimental setting, the FGD-based methods demonstrate improved performance compared to standard GD.





\begin{figure}[H]
    \centering
    \includegraphics[width=0.6\linewidth]{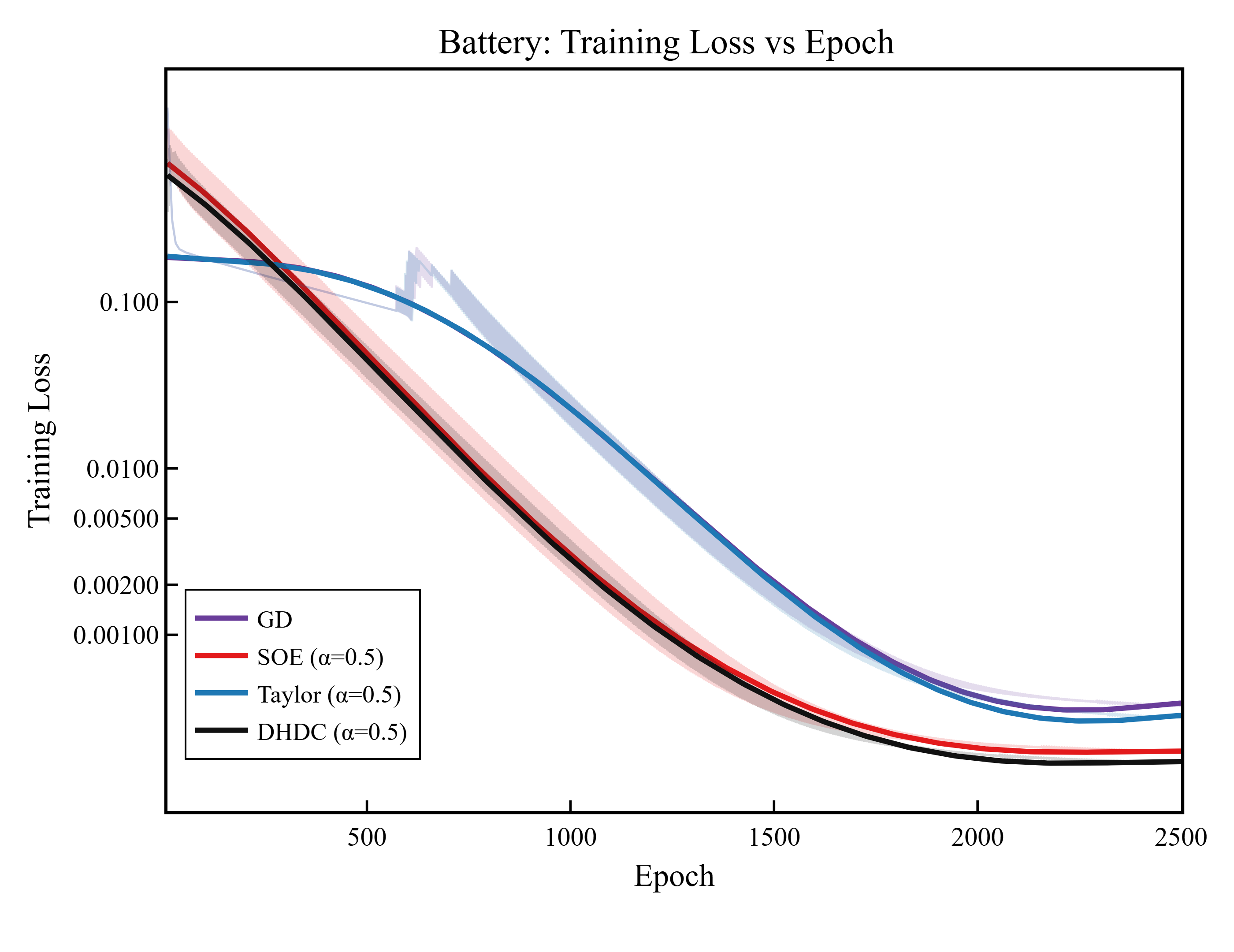}
    \caption{Training loss curves over the full training horizon (epochs 10--2500).}
    \label{fig:battery_full}
\end{figure}

Looking at Figure~\ref{fig:battery_full}, which shows the training loss over epochs, the bold lines represent the Step-wise Moving Average(SMA), allowing for a clearer observation of the overall convergence trends. This figure reveals that DHDC achieves a lower training loss compared to the competing methodologies. While fluctuations persist due to the nonconvexity of the problem, the overall trajectory demonstrates that both the stability and final performance of DHDC have been improved. Notably, from the initial transient phase of the first 1,000 epochs to the later stages of training where the optimization reaches a stable regime, DHDC consistently maintains lower loss values and demonstrates superior performance relative to the other algorithms.

\subsection{Optimization of the Stochastic Signal Estimation}\label{subsec:signal}

In this subsection, we present the numerical results for the stochastic signal estimation task previously introduced in \cref{sec:signal}. Since the signal task exhibits sustained oscillations throughout the training phase, we employ the SMA to capture the overall convergence trends more clearly. In this experimental setting, the FGD methods demonstrate superior performance compared to standard GD.

Figure \ref{fig:sigloss} illustrates that while the training loss exhibits significant oscillations during the late optimization phase, it follows a discernible trend. To enhance visual clarity, raw loss values are plotted with reduced opacity, and bold lines represent the SMA. These SMA trends clearly demonstrate that both DHDC and SOE provide superior convergence properties over conventional GD. Notably, DHDC demonstrates the most effective loss minimization, maintaining a visible gap below both SOE and GD.

\begin{figure}[H]
    \centering
    \includegraphics[width=0.67\linewidth]{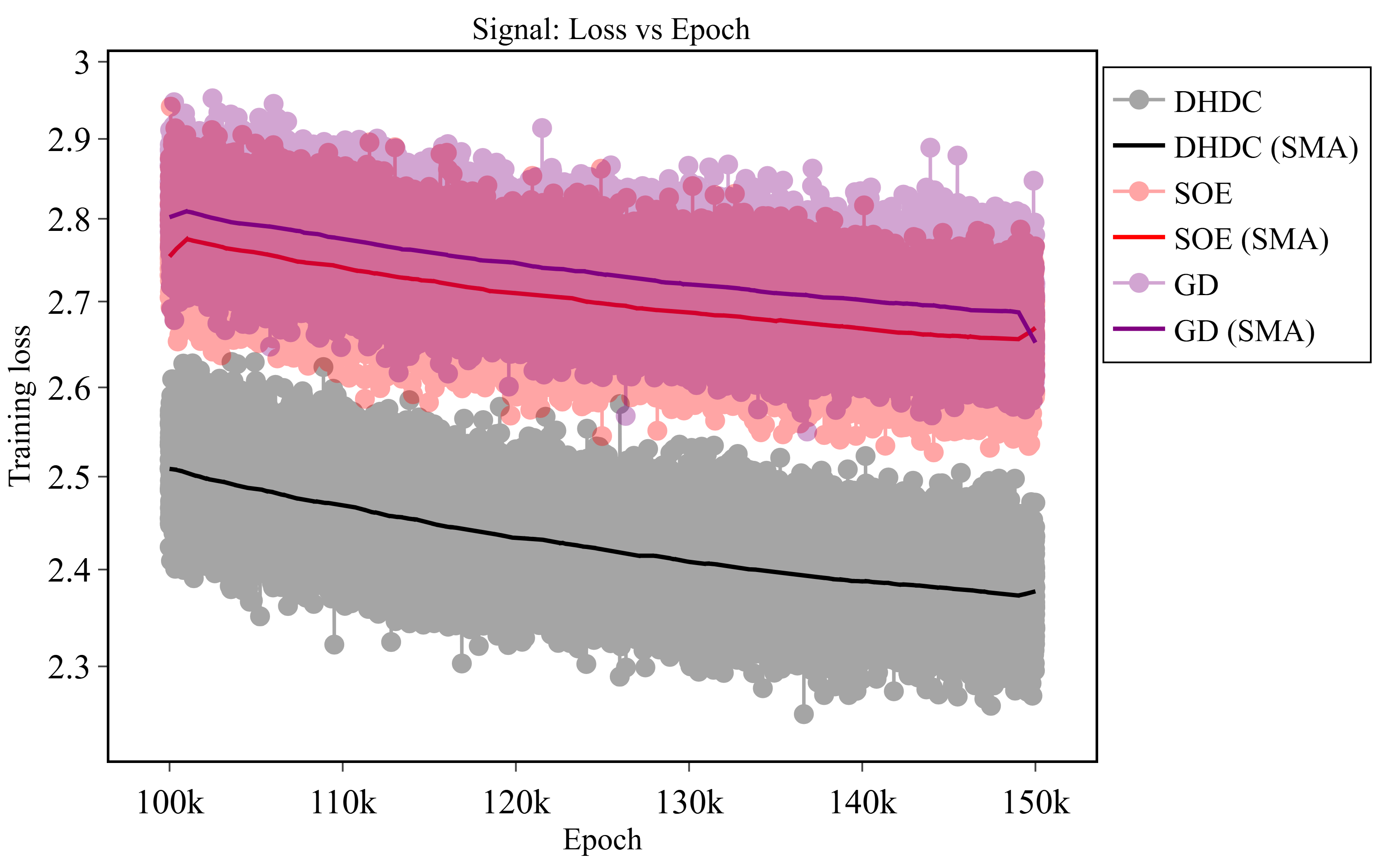}
    \caption{Evolution of training loss during the late optimization phase ($100,000$--$150,000$ epochs).}
    \label{fig:sigloss}
\end{figure}

To further assess the degree of convergence, we present the evolution of gradient norms in Figure \ref{fig:signorm}. These norms reflect the rate of change in the loss at each epoch, where a diminishing norm signifies that the optimization has approached a stable regime with minimal variation. Following the same visualization convention as the loss plots, raw data points are displayed with reduced opacity, and bold lines represent the SMA to highlight the underlying trends. The results demonstrate that both DHDC and SOE converge to significantly deeper local minima than conventional GD, as evidenced by their drastically lower gradient norm levels.

\begin{figure}[H]
    \centering
    \includegraphics[width=0.67\linewidth]{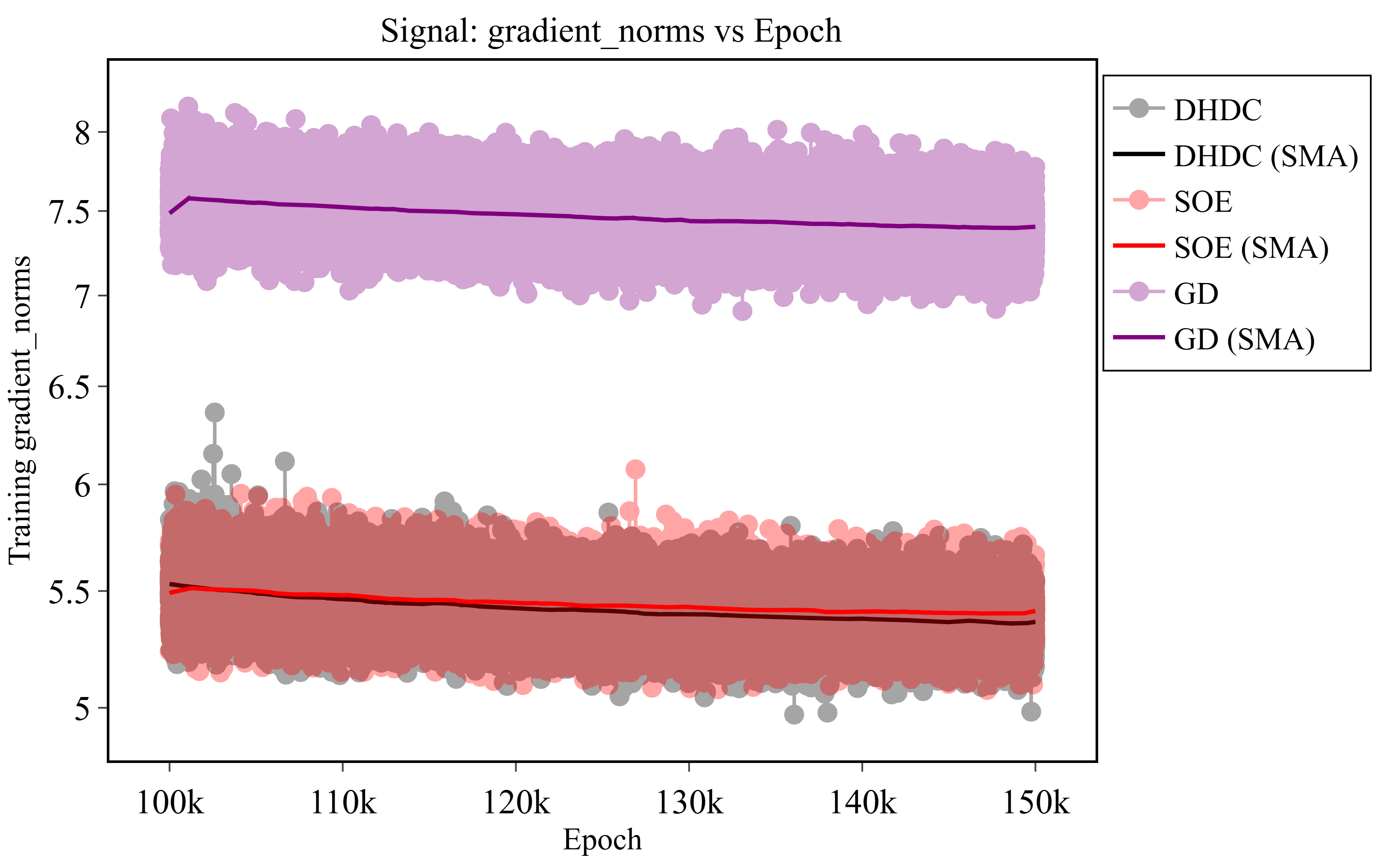}
    \caption{Evolution of training gradient norms during the late optimization phase ($100,000$--$150,000$ epochs).}
    \label{fig:signorm}
\end{figure}

Upon calculating the SMA for each task, DHDC exhibits the most robust performance. Furthermore, the gradient norm analysis reveals that both DHDC and SOE have reached a near-converged state, significantly outpacing GD. 
Conventional GD updates parameters using only the instantaneous gradient.
While this approach is sufficient for short-memory or rapidly mixing signals, it does not explicitly account for long-range correlations present in the data.
On the other hand, because FGD employs a similar power-law memory kernel in its update rule, this signal task provides a principled benchmark for evaluating whether optimizer memory aligned with data memory improves learning efficiency.


\subsection{Optimization of CIFAR-10 Image Classification}\label{subsec:image}

In this subsection, we present the numerical results for the CIFAR-10 image classification task introduced in \cref{sec:image}. Since this task utilizes the widely adopted ResNet-18 architecture, the training process remains relatively stable and does not exhibit the significant oscillations observed in previous examples. However, as ResNet-18 possesses a much more complex structure compared to a standard MLP, it is necessary to analyze the potential for vanishing gradients within the DHDC framework. To this end, we examine the weight trajectories using PCA, as illustrated in Figure~\ref{fig:weightmove}.

\begin{figure}[H]
        \centering
        \includegraphics[width=0.5\linewidth]{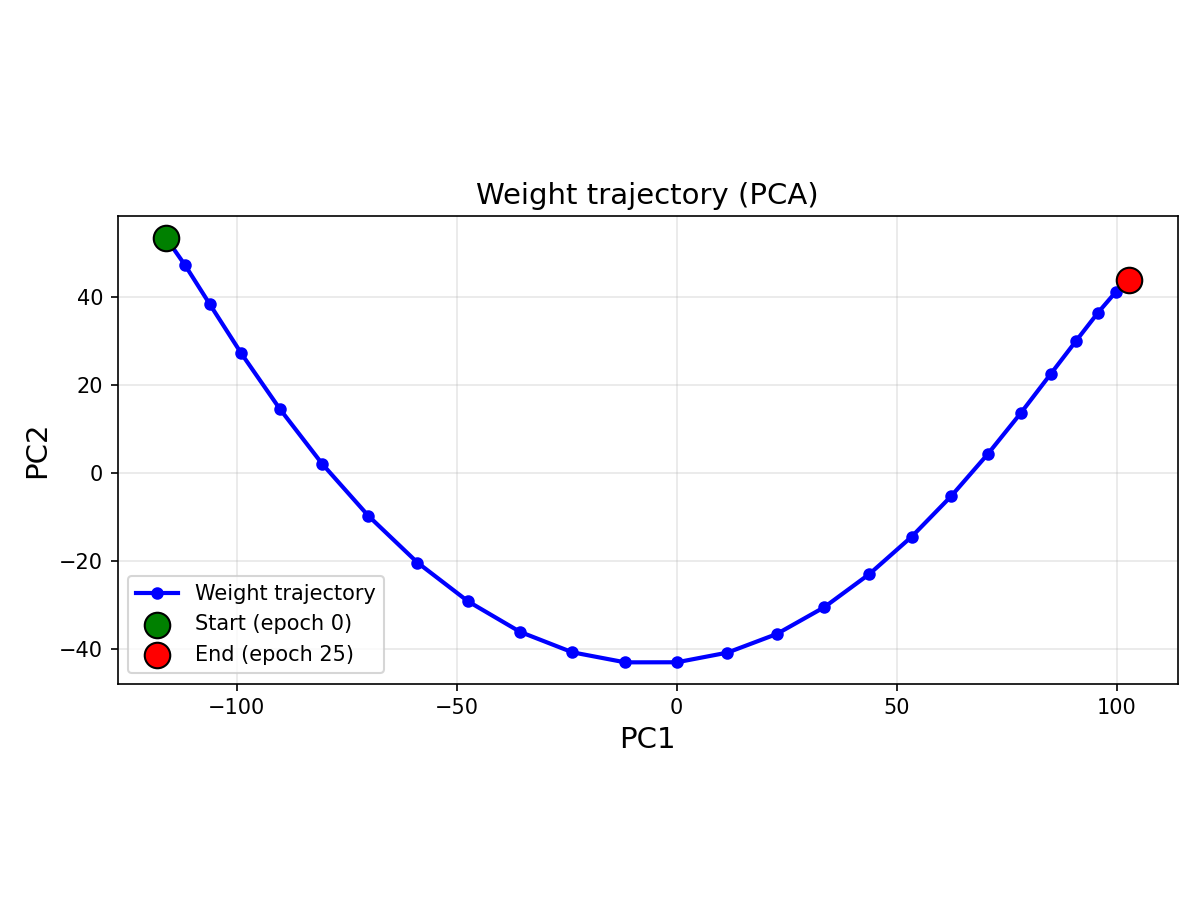}
        \caption{This is a graph plotting the position on a 2-dimensional plane per epoch, where the two dimensions with the maximum variance were selected from the weights of a single image task result.}
        \label{fig:weightmove}
    \end{figure}

Check the whole parameter with the plot is hard in complex system. So Fig \ref{fig:weightmove} compares the geometry of optimization trajectories using only the two principal weight space directions with the largest variance, denoted by $u_1$ and $u_2$. There inner product is zero because $u_2$ is vertical direction of $u_1$

Selecting the two weight moving condition called $PC1$ and $PC2$ is follow these several processes. 
At the end of each epoch $e$, save the full parameter vector:
\begin{equation}
    w_e \;:=\; \mathrm{vec}(\theta_e) \in \mathbb{R}^{D},
    \label{eq:image_vec_param}
\end{equation}
where $\mathrm{vec}(\cdot)$ concatenates all trainable tensors at epoch $e$ into a single vector.

Construct the data matrix which has whole epochs weight vectors:
\begin{equation}
    W \;:=\; [w_0, w_1, \dots, w_E]^\top \in \mathbb{R}^{(E+1)\times D}.
    \label{eq:image_pca_matrix}
\end{equation}

Compute PCA on $W$ and denote the first two principal components by $u_1,u_2 \in \mathbb{R}^{D}$.
Project each epoch parameter:
\begin{equation}
    z_e \;:=\; 
    \begin{bmatrix}
    u_1^\top (w_e-\overline{w})\\
    u_2^\top (w_e-\overline{w})
    \end{bmatrix}
    \in \mathbb{R}^2,
    \qquad
    \overline{w} := \frac{1}{E+1}\sum_{e=0}^{E} w_e.
    \label{eq:image_pca_projection}
\end{equation}

In \eqref{eq:image_pca_projection}, $(w_e-\overline{w})$ is used in magnitude of direction $u_1^\top$ and $u_1^\top$. $z_e$ is function to make blue points of each epoch at Fig \ref{fig:weightmove}.

Figure \ref{fig:weightmove} suggest the $PC1$ and $PC2$ is moving in each epoch. This mean $(w_e-\overline{w})$ has the different size in each epoch e. 
This shows the weight changes when solving the image task using the DHDC method. This confirms that even in large-scale models such as ResNet, training proceeds smoothly without gradient vanishing.

In Table \ref{tab:classification_accur}, while the accuracy increases consistently. In the early stage of training, the methods achieve comparable performance; however, as the number of epochs increases, DHDC and SOE exhibit faster convergence and achieve better performance than GD.

\begin{table}[H]
    \centering
    \resizebox{0.7\linewidth}{!}{
    \begin{tabular}{lccccccc}
        \toprule
        epochs && 1 & 5 & 10 & 15 & 20 & 25  \\
        \midrule
        &GD     & 0.3317 & 0.5926 & 0.6937 & 0.7449 & 0.7673 & \textbf{0.7951} \\
        FGD&SOE & 0.3000 & 0.5808 & 0.6950 & 0.7069 & 0.7091 & \textbf{0.8090}  \\
        &DHDC   & 0.3093 & 0.5922 & 0.6752 & 0.7090 & 0.7253 & \textbf{0.8083}  \\
        \bottomrule
    \end{tabular}%
    }
    \caption{Classification accuracy over training epochs for the CIFAR-10 task.}
    \label{tab:classification_accur}
\end{table}

Classification accuracy is reported at selected training epochs to illustrate the evolution of performance over time. As shown in Table~\ref{tab:classification_accur}, GD exhibits slightly faster initial accuracy gains, while SOE and DHDC achieve higher final accuracy.

\begin{figure}[H]
    \centering
    \includegraphics[width=0.65\linewidth]{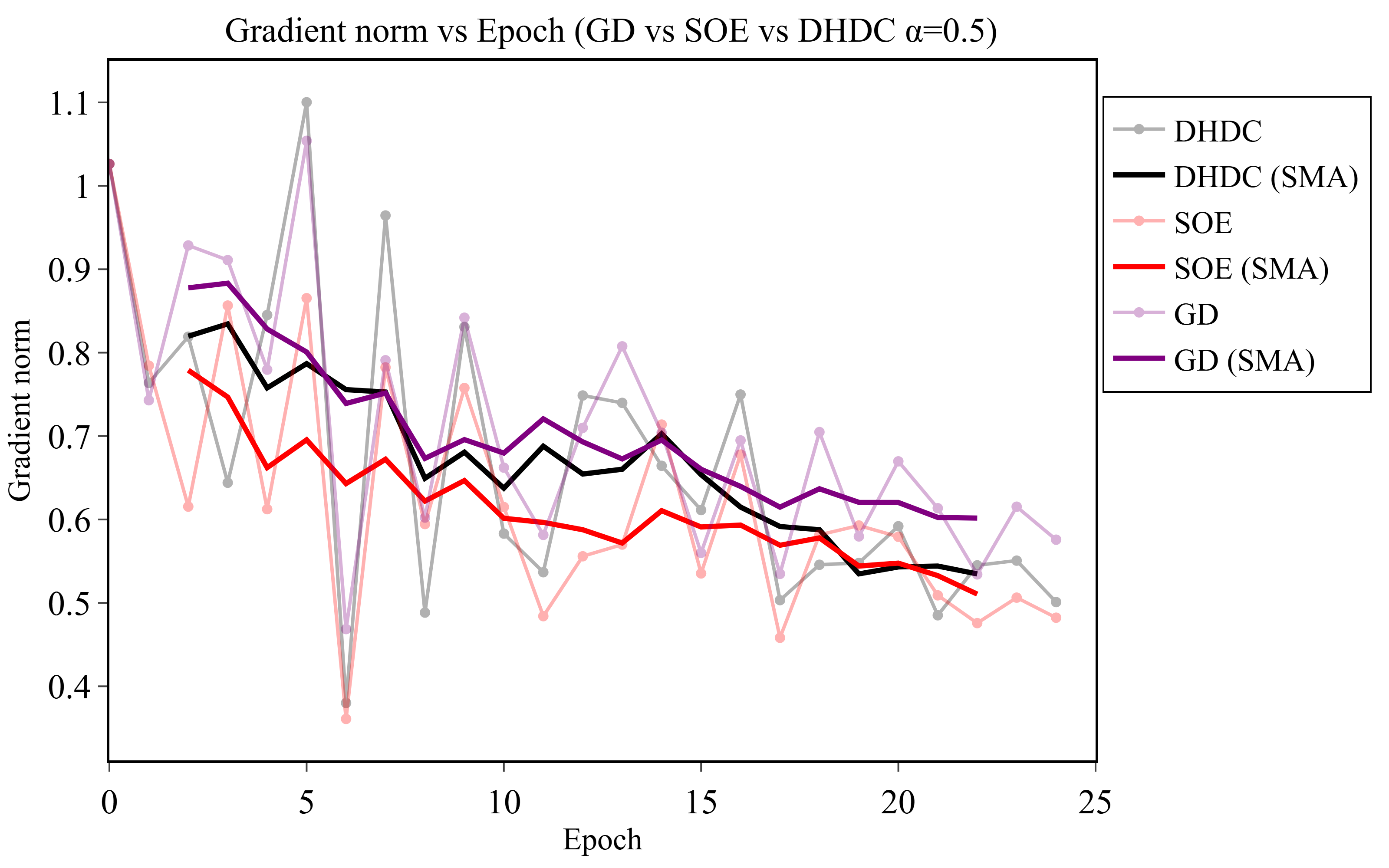}
    \caption{Comparison of gradient norms per epoch for GD, DHDC ($\alpha$=0.5), and SOE. To mitigate the visual impact of training oscillations (faded lines), a 4-epoch moving average (bold lines) is applied. The smoothed trajectories show demonstrate that both proposed fractional-order methods (DHDC and SOE) achieve a faster and more stable reduction in gradient magnitude compared to the standard GD baseline.}
    \label{fig:imagenorm}
\end{figure}

The accuracy trends further elucidate the impact of fractional memory on optimization dynamics. 
As illustrated in Figure \ref{fig:imagenorm}, while standard GD exhibits a significant slowdown during the final 15 epochs, both SOE and DHDC based FGD maintain consistent improvement. 
This behavior is corroborated by the evolution of the gradient norms; specifically, the simple moving averages of the gradient norms demonstrate continued decay for SOE and DHDC, whereas GD suffers from early stagnation. 
These observations suggest that the power-law memory inherent in FGD supports more stable local refinement during the later stages of training. 

While classification accuracy increases with the number of epochs for both classical GD and FGD, we restrict training to 25 epochs in order to emphasize differences in convergence dynamics rather than final asymptotic performance. 
In deep networks with highly nonlinear loss landscapes, lower fractional orders in FGD introduce power-law memory into the update rule, which may lead to smoother and more delicate parameter trajectories compared with conventional integer-order optimization.

\subsection{Summarized results in battery, signal, image tasks}\label{subsec:sum}

Table~\ref{tab:loss} summarizes the performance across various tasks and fractional orders. 
To ensure statistical reliability, each experiment is repeated multiple times: 10 trials for the battery task and 5 trials each for the signal and image tasks. 
For the Taylor-based method in the image task, additional runs (15 trials) were conducted for $\alpha \le 0.4$ to account for instances of training instability. 
Performance is reported as accuracy for the image classification task, while scaled loss values, averaged over multiple trials, are provided for the battery and signal tasks to facilitate consistent comparison. 
Entries marked with ``X'' denote configurations that are either inapplicable, such as GD which does not utilize a fractional order, or those that failed to yield stable training results.

\begin{table}[H]
    \centering
    \resizebox{\linewidth}{!}{
    \begin{tabular}{lccccccccccccc}
        \toprule
        \multirow{2}{*}{FGD} &
        \multicolumn{3}{c}{GD} &
        \multicolumn{3}{c}{taylor} &
        \multicolumn{3}{c}{DHDC} &
        \multicolumn{3}{c}{SOE} \\
        \cmidrule(lr){3-5} \cmidrule(lr){6-8} \cmidrule(lr){9-11} \cmidrule(lr){12-14}
        \multirow{2}{*}{Task} & & battery & signal & image & battery & signal & image & battery & signal & image & battery & signal & image \\
        & & loss($\times10^{-4}$) & loss & accuracy & loss($\times10^{-4}$) & loss & accuracy & loss($\times10^{-4}$) & loss & accuracy & loss($\times10^{-4}$) & loss & accuracy \\
        \midrule
        &0.1   & X & X & X & 5.8789 & 6.4042 & 0.1000 & 1.7409 & 2.4781 & 0.8036 & \textbf{2.1797}& 2.5013 & 0.8030 \\
        &0.2   & X & X & X & 17.8058 & 1.1617 & 0.1000 & 1.7114 & 2.4802 & 0.8051 & 2.2552& \textbf{2.3736} & \textbf{0.8075} \\
        &0.3   & X & X & X & 17.3667 & 0.3424 & 0.1000 & 0.7410 & 2.5727 & 0.8053 & 2.3092& 2.5596 & 0.8037 \\
        &0.4   & X & X & X & 5.6016 & 0.3876 & 0.1000 & 0.7556 & 2.4656 & 0.8043 & 2.3495& 2.6963 & 0.8060 \\
        fractional order&0.5   & X & X & X & \textbf{5.4612} & 7.3059 & 0.1618 & 0.6811 & \textbf{2.3815} & \textbf{0.8083} & 2.3926& 2.6813 & 0.7979 \\
        &0.6   & X & X & X & 5.4960 & 3.8233 & 0.2829 & 0.5412 & 2.3989 & 0.8028 & 2.4200& 2.7162 & 0.7992 \\
        &0.7   & X & X & X & 17.7946 & 0.3011 & 0.3307 & \textbf{0.4623} & 2.5183 & 0.7998 & 2.4642& 2.6156 & 0.8054 \\
        &0.8   & X & X & X & 5.7407 & \textbf{0.1749} & 0.4442 & 0.7882 & 2.3989 & 0.8033 & 2.5383& 2.4096 & 0.8009 \\
        &0.9   & X & X & X & 17.2877 & 0.1783 & \textbf{0.5692} & 0.5087 & 2.6108 & 0.8060 & 2.5888& 2.8555 & 0.8045 \\
        &1.0   & 17.7862 & 2.7102 & 0.7951 &X & X & X & X & X & X & X& X & X \\
        \bottomrule
    \end{tabular}%
    }
    \caption{This is a table showing the final train loss for battery and signal, and the final train accuracy for image, obtained by applying GD, Taylor method, and the fast methods of DC-DHDC and SOE-to the battery, signal, and image models, respectively. The most ideal values are indicated in bold.}
    \label{tab:loss}
\end{table}
 
For each task in Table~\ref{tab:loss}, the fractional order achieving the lowest loss or highest accuracy is highlighted in bold.
Overall, no monotonic relationship is observed between the fractional order $\alpha$ and the performance for DHDC and SOE.
This suggests that the optimal memory depth depends on both the task characteristics and the interaction between the optimization dynamics and the data structure.
In contrast, the Taylor-based method exhibits a more pronounced trend in the image classification task than other case in Table \ref{tab:loss}, where performance generally improves as $\alpha$ approaches $1$.
This behavior is consistent with the fact that the Taylor approximation becomes closer to the classical gradient as $\alpha \to 1$.
Across tasks, DHDC shows strong performance in the battery model, consistently achieving low loss values.
For the image task, DHDC and SOE yield comparable accuracy levels.
In the signal task, the Taylor-based method achieves competitive results at certain fractional orders; however, its performance varies considerably across $\alpha$, including cases with poor convergence.
These observations indicate that while the Taylor-based approach can perform well under specific configurations, its performance is sensitive to the choice of fractional order.
In contrast, DHDC and SOE provide more consistent behavior across a wider range of $\alpha$, suggesting improved robustness with respect to memory configuration.
Overall, the results indicate that fast-memory implementations based on DHDC and SOE offer a more stable trade-off between performance and robustness across different tasks and fractional orders.

Under normal circumstances, the Taylor method is limited to remembering parameters only from the immediately preceding epoch, whereas DHDC and SOE grow as the number of iterations increases. Therefore, among the FGD methods, Taylor should have the fastest speed. 
However, since DHDC and SOE methods' time complexity is increasing when they pass iterations, they exhibit faster speeds in the image task, which train with 4875 iterations. In contrast, while the image task has a smaller number of iterations, the signal task requires 150000 iterations. Because of this high number of iterations, the overhead of parameter accumulation in DHDC and SOE becomes more significant in the signal task. For this reason, Taylor remains the fastest among the FGD methods in the signal task
For the battery task, due to the structure of the loss function, there exists a term that approximates the Caputo fractional derivative using the L1 method and compares it with the result of the equation in which the actual fractional derivative is used. The L1 approximation process consumes resources independently on FGD methods, resulting in similar outcomes across all FGD methods.

In addition, FGD methods in Table \ref{tab:loss} are visualized in the following graphs at Figures \ref{fig:fractional_order} and \ref{fig:fractional_order_ac}.

\begin{figure}[H]
    \centering
    \includegraphics[width=0.45\textwidth]{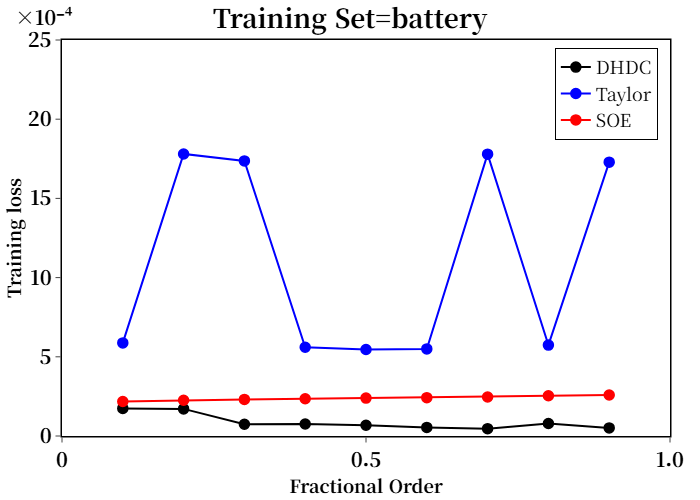}
    \includegraphics[width=0.45\textwidth]{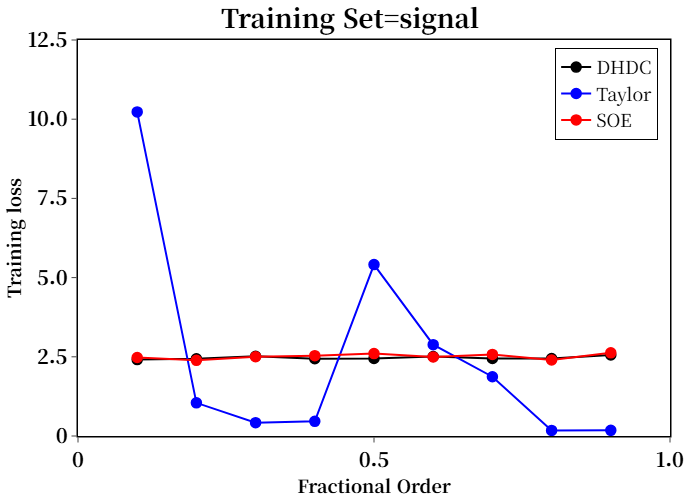}
    \caption{Training loss as functions of the fractional order parameter for each individual training dataset.} 
    \label{fig:fractional_order}
\end{figure}

\begin{figure}[H]
    \centering
    \includegraphics[width=0.45\textwidth]{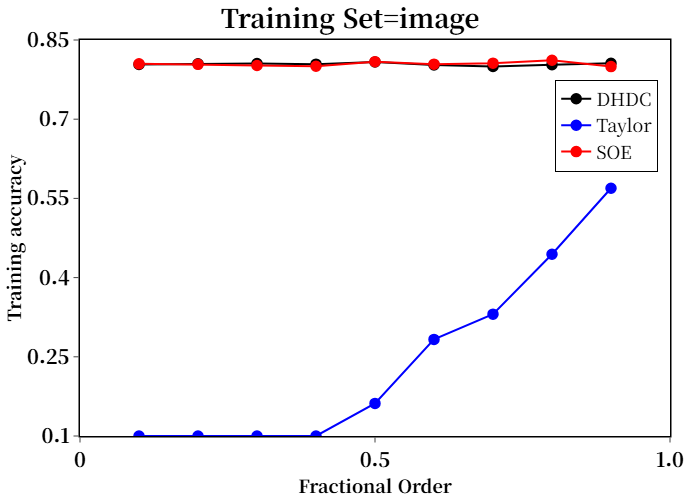}
    \caption{Classification accuracy as functions of the fractional order parameter for each individual training dataset.} 
    \label{fig:fractional_order_ac}
\end{figure}

In \ref{Appendix:E}, box plots are created using the results from all trials for each fractional order.

\section{Conclusion}

This paper focuses on one of the main practical difficulties of Caputo-based fractional gradient descent: the high computational cost caused by the accumulation of long-memory history terms. To address this issue, we reformulated the fractional descent direction as a discrete convolution over past gradients and developed two efficient fast-memory implementations. The first uses a sum-of-exponentials (SOE) approximation of the Caputo kernel, while the second introduces a new dyadic hierarchical discrete convolution (DHDC) framework that compresses the gradient history through multiscale aggregation. Both approaches preserve the characteristic power-law memory structure of fractional optimization while significantly reducing computational and storage costs.

From the theoretical side, the proposed fast-memory schemes were interpreted as controlled perturbations of the ideal Caputo fractional descent direction. Under standard smooth strong convexity assumptions, we showed that the essential convergence properties of the original fractional method are preserved when the approximation error remains sufficiently controlled. This viewpoint helps explain why compressed fractional memory can still produce stable optimization dynamics in practice.

The numerical experiments support these theoretical observations across problems with very different characteristics. On the highly multimodal Rastrigin benchmark problem, the proposed DHDC method exhibited persistent but bounded oscillatory behavior that allowed the optimizer to repeatedly escape shallow local minima while still converging stably near the global minimizer. Additional experiments on battery state estimation, stochastic signal reconstruction, and CIFAR-10 image classification further demonstrated that fractional memory can improve optimization stability and robustness in problems involving long-range dependence, ill-conditioning, and nonconvex optimization landscapes.

Overall, the results suggest that fractional-order optimization can be made computationally practical without losing the essential advantages of long-memory dynamics. More broadly, this work provides a connection between fractional numerical analysis and scalable optimization algorithms, showing that the nonlocal memory effects of Caputo operators can be incorporated into modern learning problems in an efficient and stable manner.

\bibliographystyle{ieeetr}
\bibliography{ref}

\appendix

\section{Numerical Schemes for Caputo Derivative: Left sum and L1 Approximation}
\label{Appendix:A}

\subsection{Constant Approximation scheme}\label{subsec:constant_approximation_scheme}

The simplest approach to numerically approximating the Caputo fractional derivative operator is to approximate all functions within the integrand on each sub-interval $[t_j, t_{j+1}]$ by a constant value evaluated at the left endpoint $t_j$. In this scheme, not only the derivative term $u'(\tau)$ but also the kernel function $(t_n - \tau)^{-\alpha}$, which exhibits a weak singularity, are treated as representative constants for the interval and are extracted from the integral together. As a result, this constant approximation transforms the complex process of kernel integration into a straightforward algebraic weighted sum, offering both intuitive numerical implementation and computational efficiency.

Let the interval \([t_0,t_n]\) be divided into \(n\) uniform sub-intervals
\[
    [t_j,t_{j+1}],
    \qquad
    j=0,1,\ldots,n-1,
\]
with step size \(\Delta t\). Starting from the Caputo fractional derivative,
we have
\begin{equation}
\label{eq:constant_approximation_start}
\begin{aligned}
    {}_0^C D_t^\alpha u(t_n)
    &=
    \frac{1}{\Gamma(1-\alpha)}
    \int_{t_0}^{t_n}
    (t_n-\tau)^{-\alpha}u'(\tau)\,d\tau  \\
    &=
    \frac{1}{\Gamma(1-\alpha)}
    \sum_{j=0}^{n-1}
    \int_{t_j}^{t_{j+1}}
    (t_n-\tau)^{-\alpha}u'(\tau)\,d\tau .
\end{aligned}
\end{equation}

Applying the constant approximation at the left endpoint $t_j$ as described above, the expression is derived as follows:
\begin{equation}
    \label{eq:constant_approximation_derivation}
    \begin{aligned}
        {}_0^C D_t^\alpha u(t_n)
        &\approx
        \frac{1}{\Gamma(1-\alpha)}
        \sum_{j=0}^{n-1}
        (t_n-t_j)^{-\alpha}u'(t_j)
        \int_{t_j}^{t_{j+1}} 1\,d\tau  \\
        &=
        \frac{1}{\Gamma(1-\alpha)}
        \sum_{j=0}^{n-1}
        \bigl((n-j)\Delta t\bigr)^{-\alpha}
        \Delta t\,u'(t_j)  \\
        &=
        \frac{\Delta t^{1-\alpha}}{\Gamma(1-\alpha)}
        \sum_{j=0}^{n-1}
        (n-j)^{-\alpha}u'(t_j).
    \end{aligned}
\end{equation}

Hence, the constant approximation can be written as the weighted sum
\begin{equation}\label{eq:constant_approximation_weighted_sum}
    {}_0^C D_t^\alpha u(t_n)
    \approx
    \sum_{j=0}^{n-1}
    \widehat{w}_{n-j}^{(\alpha)}u'(t_j),
\end{equation}
where the constant-approximation weights are defined by
\begin{equation}\label{eq:constant_approximation_weights}
    \widehat{w}_{k}^{(\alpha)}
    =
    \frac{\Delta t^{1-\alpha}}{\Gamma(1-\alpha)}
    k^{-\alpha},
    \qquad
    k=1,2,\ldots,n.
\end{equation}

\subsection{Left sum Approximation scheme}\label{app:LSA}

The Left sum approximation follows a similar partitioning of the domain $[t_0, t_n]$ into $n$ uniform sub-intervals $[t_j, t_{j+1}]$ with a step size $\Delta t$. In this scheme, as with the previous method, the derivative term $u'(\tau)$ is approximated by its value at the left endpoint of each sub-interval, $u'(t_j)$. However, a crucial distinction lies in the treatment of the weakly singular kernel $(t_n - \tau)^{-\alpha}$; while the derivative is factored out as a constant, the kernel itself remains within the integral and is integrated analytically over each interval. This approach results in the following discrete formulation of the Caputo fractional derivative:

\begin{equation}\label{eq:appena1}
    \begin{aligned}
        {}_0^C D_t^\alpha u(t_n) &= \frac{1}{\Gamma(1-\alpha)} \int_{t_0}^{t_n} (t_n - \tau)^{-\alpha} u'(\tau) d\tau \\
        &= \frac{1}{\Gamma(1-\alpha)} \sum_{j=0}^{n-1} \int_{t_j}^{t_{j+1}} (t_n - \tau)^{-\alpha} u'(\tau) d\tau \\
        &\approx \frac{1}{\Gamma(1-\alpha)} \sum_{j=0}^{n-1} u'(t_j) \int_{t_j}^{t_{j+1}} (t_n - \tau)^{-\alpha} d\tau \\
        &= \frac{1}{\Gamma(1-\alpha)} \sum_{j=0}^{n-1} u'(t_j) \left[ -\frac{1}{1-\alpha} (t_n - \tau)^{1-\alpha} \right]_{t_j}^{t_{j+1}} \\
        &= \frac{\Delta t^{1-\alpha}}{\Gamma(2-\alpha)} \sum_{j=0}^{n-1} u'(t_j) \left\{ (n-j)^{1-\alpha} - (n-j-1)^{1-\alpha} \right\} \\
        &= \sum_{j=0}^{n-1} w_{n-j}^{(\alpha)} u'(t_j).
    \end{aligned}
\end{equation}
In this formulation, the discrete weight coefficients $w_{k}^\alpha$ are defined as:
\begin{equation}
    w_{k}^{(\alpha)} = \frac{\Delta t^{1-\alpha}}{\Gamma(2-\alpha)} \left[ k^{1-\alpha} - (k-1)^{1-\alpha} \right], \quad k=1, 2, \dots, n.
\end{equation}

\subsection{L1 scheme}

The L1 scheme \cite{Oldham1974, langlands2005accuracy} is a robust numerical approximation for evaluating the Caputo fractional derivative, particularly valued for its temporal convergence rate of $\mathcal{O}(\Delta t^{2-\alpha})$. While the previous schemes relied on a constant approximation of the derivative at a single point, the core principle of the L1 scheme involves approximating the function $u(\tau)$ itself using linear Lagrange interpolation over each sub-interval $[t_j, t_{j+1}]$. By partitioning the temporal domain $[0, t_n]$ into $n$ uniform sub-intervals of length $\Delta t$, the derivative $u'(\tau)$ is replaced by the constant slope of the linear interpolant within each interval. This refined treatment of the derivative term, combined with an analytical integration of the power-law kernel, leads to a significant improvement in numerical accuracy. Starting from the definition of the Caputo fractional derivative for $0 < \alpha < 1$:
\begin{equation}
    \prescript{C}{0}{D}_{t}^{\alpha} u(t_n) = \frac{1}{\Gamma(1-\alpha)} \int_{t_0}^{t_n} (t_n-\tau)^{-\alpha}u'(\tau) d\tau.
\end{equation}

By employing a piecewise linear interpolation for $u(\tau)$ over each sub-interval $[t_j, t_{j+1}]$, the derivative $u'(\tau)$ is approximated by the linear interpolation $\frac{u(t_{j+1}) - u(t_j)}{\Delta t}$. Consequently, the integral is discretized as follows:
\begin{equation}
    \prescript{C}{0}{D}_{t}^{\alpha} u(t_n) \approx \frac{1}{\Gamma(1-\alpha)} \sum_{j=0}^{n-1} \frac{u(t_{j+1}) - u(t_j)}{\Delta t} \int_{t_j}^{t_{j+1}} (t_n - \tau)^{-\alpha} d\tau.
\end{equation}

The analytical evaluation of the kernel integral over each sub-interval yields:
\begin{equation}
    \begin{aligned}
        \int_{t_j}^{t_{j+1}} (t_n - \tau)^{-\alpha} d\tau &= \left[ -\frac{(t_n - \tau)^{1-\alpha}}{1-\alpha} \right]_{t_j}^{t_{j+1}} \\
        &= \frac{\Delta t^{1-\alpha}}{1-\alpha} \left( (n-j)^{1-\alpha} - (n-j-1)^{1-\alpha} \right).
    \end{aligned}
\end{equation}

Defining the weight coefficients as $a_{k}^{(\alpha)} = (k+1)^{1-\alpha} - k^{1-\alpha}$, the discrete L1 operator is expressed as:
\begin{equation}
    \mathcal{D}_{L1}^{\alpha} u(t_n) = \frac{\Delta t^{-\alpha}}{\Gamma(2-\alpha)} \sum_{j=0}^{n-1} a_{n-j-1}^{(\alpha)} (u(t_{j+1}) - u(t_j)).
\end{equation}

For practical computational implementation, this summation can be rearranged to group terms by the function values at each time step $u(t_j)$:
\begin{equation}
    \mathcal{D}_{L1}^{\alpha} u(t_n) = \frac{\Delta t^{-\alpha}}{\Gamma(2-\alpha)} \left[ a_0^{(\alpha)} u(t_n) - \sum_{j=1}^{n-1} (a_{n-j-1}^{(\alpha)} - a_{n-j}^{(\alpha)}) u(t_j) - a_{n-1}^{(\alpha)} u(t_0) \right].
\end{equation}

This final form provides an efficient history-dependent update rule, where the influence of past states is governed by the difference between consecutive weight coefficients, reflecting the decaying memory property of the Caputo kernel.

\section{Sum-of-Exponential method \cite{jiang2017fast}}\label{Appendix:B}

The weights for the sum-of-exponentials approximation are determined as follows. First, we choose $M$ nodes $\{\xi_m\}_{m=1}^M$ on a logarithmic scale according to
\[
    \xi_m=\exp\left(\log\left(\frac{1}{t_{\min}}\right)+\frac{m-1}{M-1}\left(\log\left(\frac{1}{t_{\max}}\right)-\log\left(\frac{1}{t_{\min}}\right)\right)\right).
\]
Then, we use $\xi_m$ to make exponential sum to approximate kernel following the next steps.

\begin{enumerate}
    \item \textbf{Fitting weights.} 
    We sample the kernel at logarithmic points $\xi_m, m = 1,\ldots,M$ 
    and solve the least squares problem, 
    the kernel is defined with t to the power of negative alpha
    \[
        \min_{\omega_m\ge 0}\;\sum_\ell 
        \left|\,k(t_\ell)-\sum_{m=1}^M \omega_m e^{-\xi_m t_\ell}\right|^2.
    \]
    where $k(t)=\frac{1}{\Gamma(1-\alpha)}\,t^{-\alpha}, 0 < \alpha < 1$.
    
    \item \textbf{Recursive states.} 
    
    \label{sec:soe_derivation}
    
    To rigorously justify the explicit summation form and the Markovian update rule of the auxiliary state $R_m^{(n)}$, we trace its derivation back to the continuous-time definition of the fractional gradient. The continuous fractional gradient direction $\hat{u}(t)$ is defined by the convolution of the Caputo kernel $k(t)$ and $u(t)$:
    \[
        \hat{u}(t) = \int_0^t k(t-\tau) u(\tau) d\tau.
    \]
    Substituting the SOE approximation $k(t) \approx \sum_{m=1}^M \omega_m e^{-\xi_m t}$ into the convolution integral yields:
    \[
        \hat{u}(t) \approx \int_0^t \left( \sum_{m=1}^M \omega_m e^{-\xi_m (t-\tau)} \right) u(\tau) d\tau.
    \]
    
    Since the integral is a linear operator, we can interchange the summation and integration. This procedure naturally separates the non-local history integral into $M$ independent auxiliary states. Specifically, the approximation of $\hat{u}(t)$ is expressed as:
    \begin{equation}\label{eq:u_hat_approx}
        \hat{u}(t) \approx \sum_{m=1}^M \omega_m \hat{R}_m(t),
    \end{equation}
    where each auxiliary variable $\hat{R}_m(t)$ captures the historical contribution corresponding to a specific exponential mode:
    \begin{equation}\label{eq:Rm_definition}
        \hat{R}_m(t) = \int_0^t e^{-\xi_m (t-\tau)} u(\tau) d\tau.
    \end{equation}
    To enable efficient numerical implementation, we consider the discrete-time evaluation of \eqref{eq:Rm_definition} at $t_{n+1}$. By splitting the integration interval, we obtain the following recurrence relation:
    \begin{align}
        \hat{R}_m^{(n+1)} &= \int_{t_0}^{t_{n+1}} e^{-\xi_m(t_{n+1} - \tau)} u(\tau) d\tau \nonumber \\
        &= \int_{t_n}^{t_{n+1}} e^{-\xi_m(t_{n+1} - \tau)} u(\tau) d\tau + e^{-\xi_m \Delta t} \int_{t_0}^{t_n} e^{-\xi_m(t_n - \tau)} u(\tau) d\tau \nonumber \\
        &= \int_{t_n}^{t_{n+1}} e^{-\xi_m(t_{n+1} - \tau)} u(\tau) d\tau + e^{-\xi_m \Delta t} \hat{R}_m^{(n)}, \label{eq:soe_recursive_step}
    \end{align}
    where $\hat{R}_m^{(0)} = 0$. By assuming that $u(\tau) \approx u(t_n)$ remains constant over the small interval $[t_n, t_{n+1}]$, the non-local convolution is transformed into a local inductive structure. Under this approximation, the first integral in \eqref{eq:soe_recursive_step} evaluates to:
    \begin{equation}
        \hat{R}_m^{(n+1)} \approx \frac{1-e^{-\xi_m \Delta t}}{\xi_m} u(t_n) + e^{-\xi_m \Delta t} \hat{R}_m^{(n)}.
    \end{equation}
    Let $C_m = \frac{1-e^{-\xi_m \Delta t}}{\xi_m}$. To achieve a simplified recursive form, we define the computational auxiliary state $R_m^{(n+1)}$ such that $\hat{R}_m^{(n+1)} = C_m R_m^{(n+1)}$. Substituting this into the above approximation yields:
    \begin{equation}
        R_m^{(n+1)} = e^{-\xi_m \Delta t} R_m^{(n)} + u(t_n), \quad R_m^{(0)} = 0.
    \end{equation}
    Finally, the update direction $\hat{u}(t_{n+1})$ is computed by consolidating the scaling factors into the modified weights $\hat{\omega}_m = \omega_m C_m$:
    \begin{equation}
        \hat{u}(t_{n+1}) \approx \sum_{m=1}^M \hat{\omega}_m R_m^{(n+1)}.
    \end{equation}
    
\end{enumerate}

Unlike conventional numerical schemes that rely on piecewise approximations over partitioned intervals, the SOE method approximates the singular kernel using a weighted sum of exponential functions. In this framework, each $R_m^{(n)}$ serves as an auxiliary state that captures the history of $u'(\tau)$ through an exponential filter with a decay rate $\xi_m$. By leveraging the recursive structure inherent in these exponential terms, the SOE mechanism transforms the entire convolution history into a finite set of $M$ evolution states. Consequently, this approach maintains numerical accuracy comparable to standard methods while effectively reducing the per-iteration complexity from $\mathcal{O}(n)$ to $\mathcal{O}(M)$.

\section{Boxplot Analysis of Performance Distributions \ref{subsec:sum}}\label{Appendix:E}

For the battery, signal, and image tasks, final results are obtained using each of the FGD methods, namely the Taylor-based approach, DHDC, and SOE. The results from multiple independent runs were compiled and visualized via boxplots to represent the distributional characteristics, including the median, interquartile range, and outliers across the various configurations.

\begin{figure}[H]
    \centering
    \subfloat[DHDC]{%
    \includegraphics[width=0.45\textwidth]{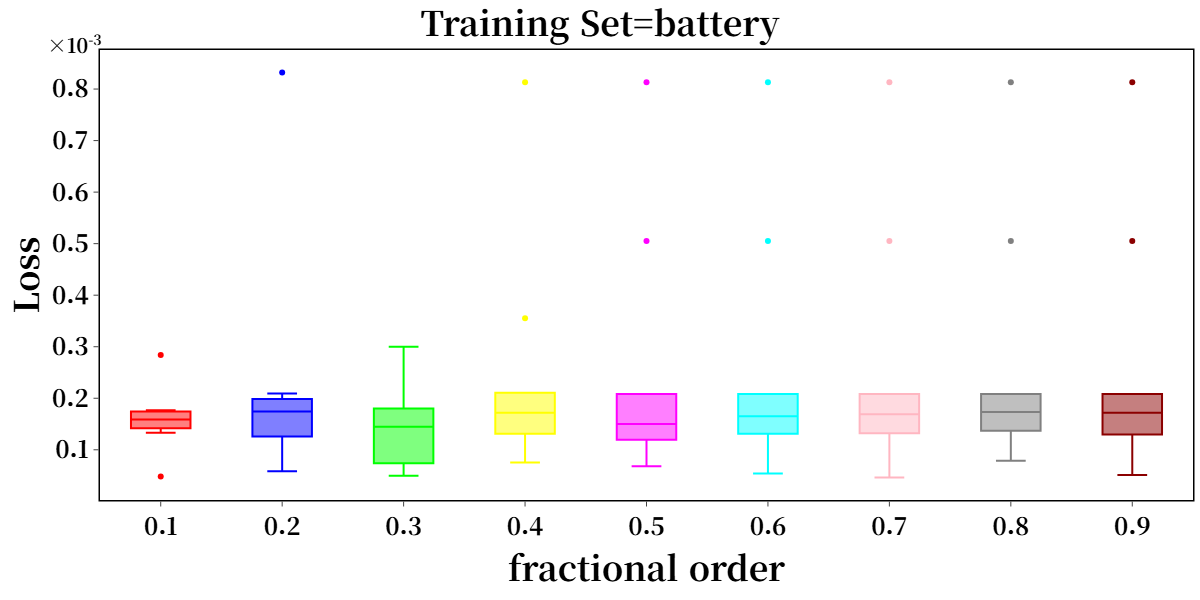}}
    \hspace{1cm}
    \subfloat[taylor]{%
    \includegraphics[width=0.45\textwidth]{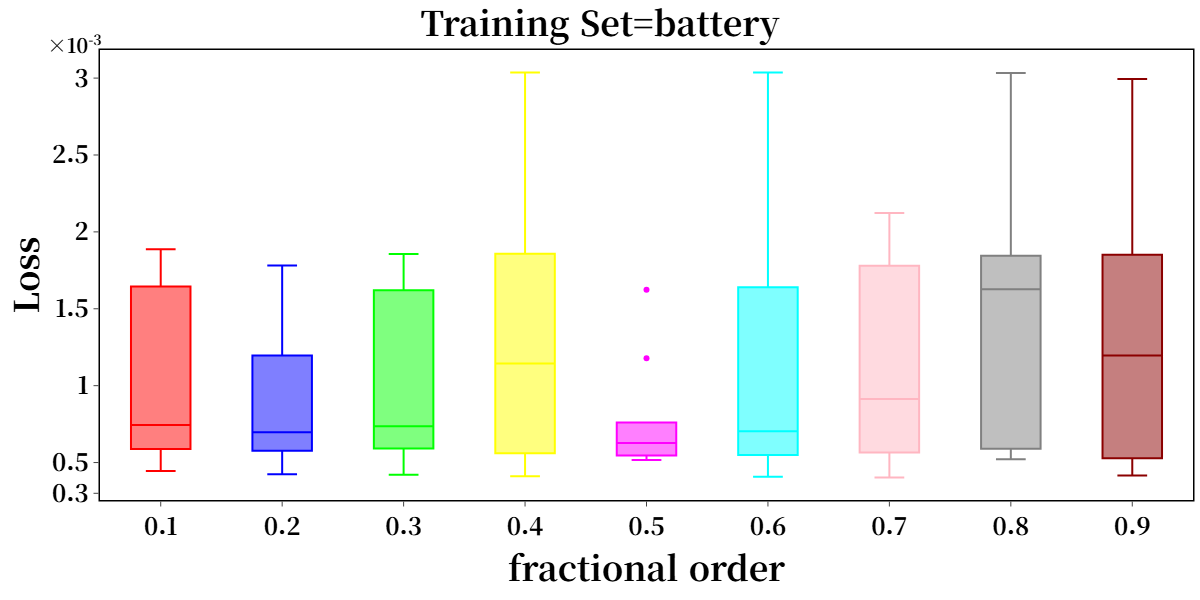}}
    
    \subfloat[SOE]{%
    \includegraphics[width=0.45\textwidth]{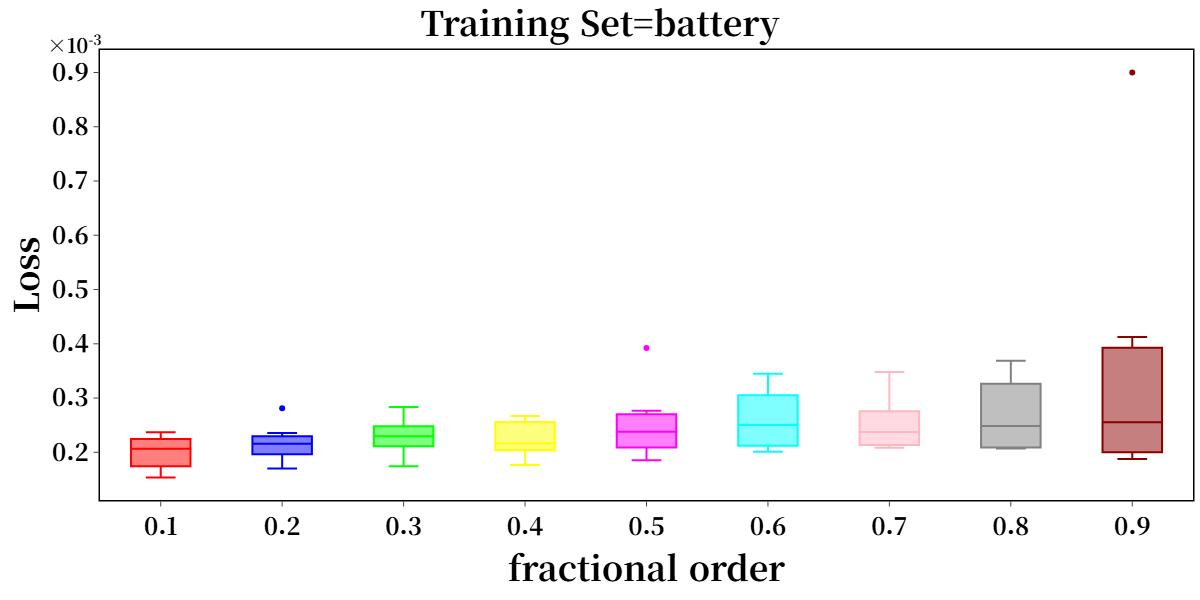}}
\end{figure}

\begin{figure}[H]
    \centering
    \subfloat[DHDC]{%
    \includegraphics[width=0.45\textwidth]{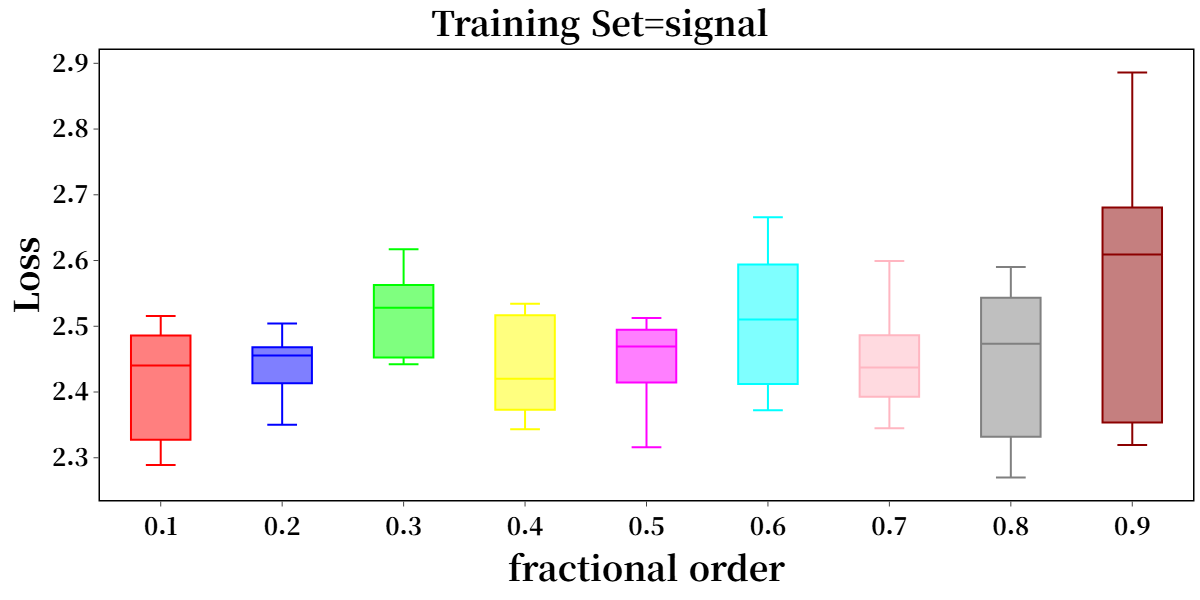}}
    \hspace{1cm}
    \subfloat[taylor]{%
    \includegraphics[width=0.45\textwidth]{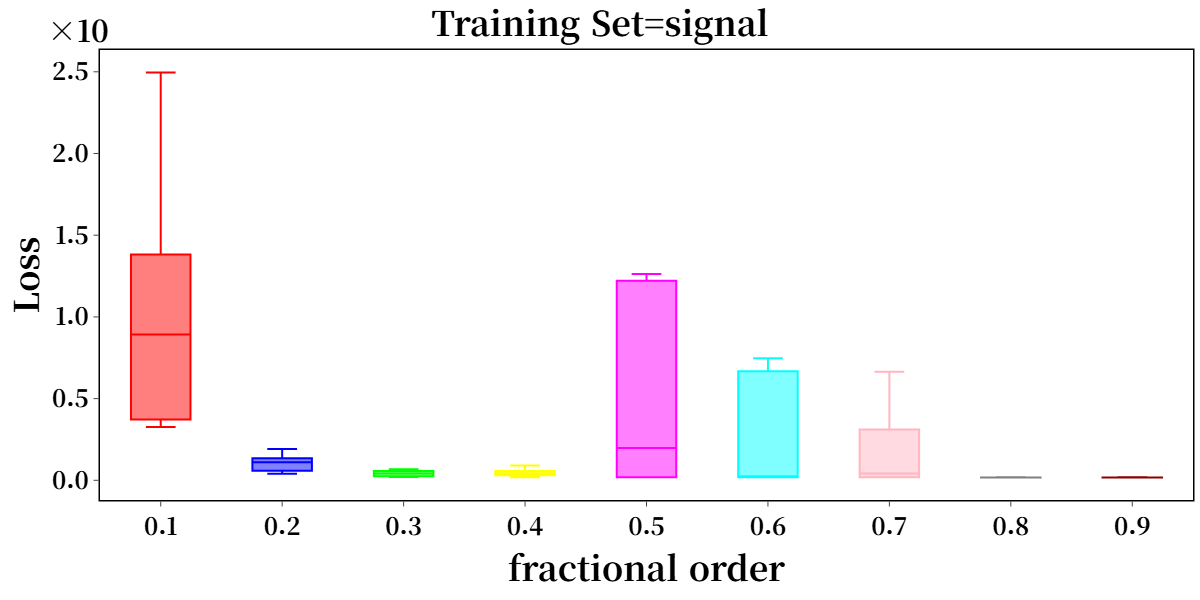}}
    
    \subfloat[SOE]{%
    \includegraphics[width=0.45\textwidth]{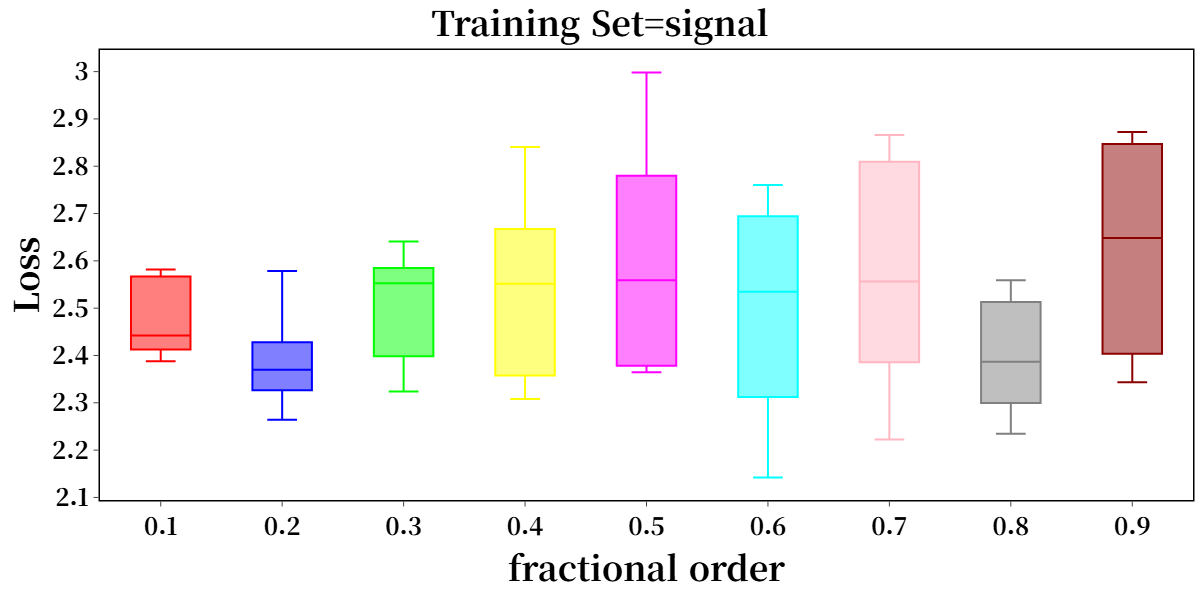}}
\end{figure}

\begin{figure}[H]
    \centering
    \subfloat[DHDC]{%
    \includegraphics[width=0.45\textwidth]{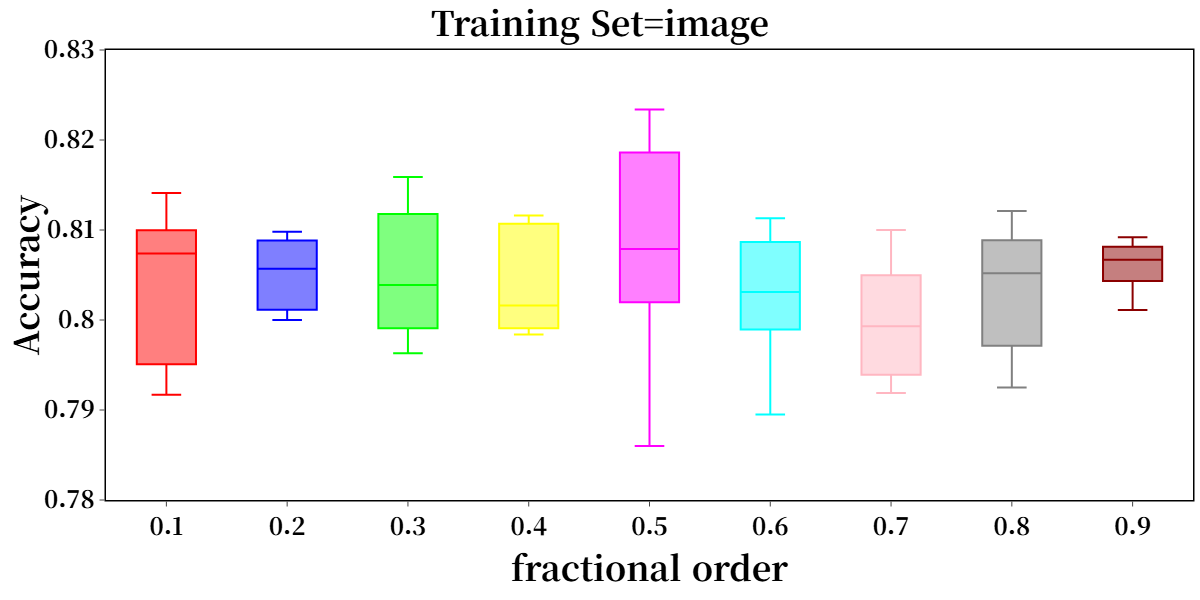}}
    \hspace{1cm}
    \subfloat[taylor]{%
    \includegraphics[width=0.45\textwidth]{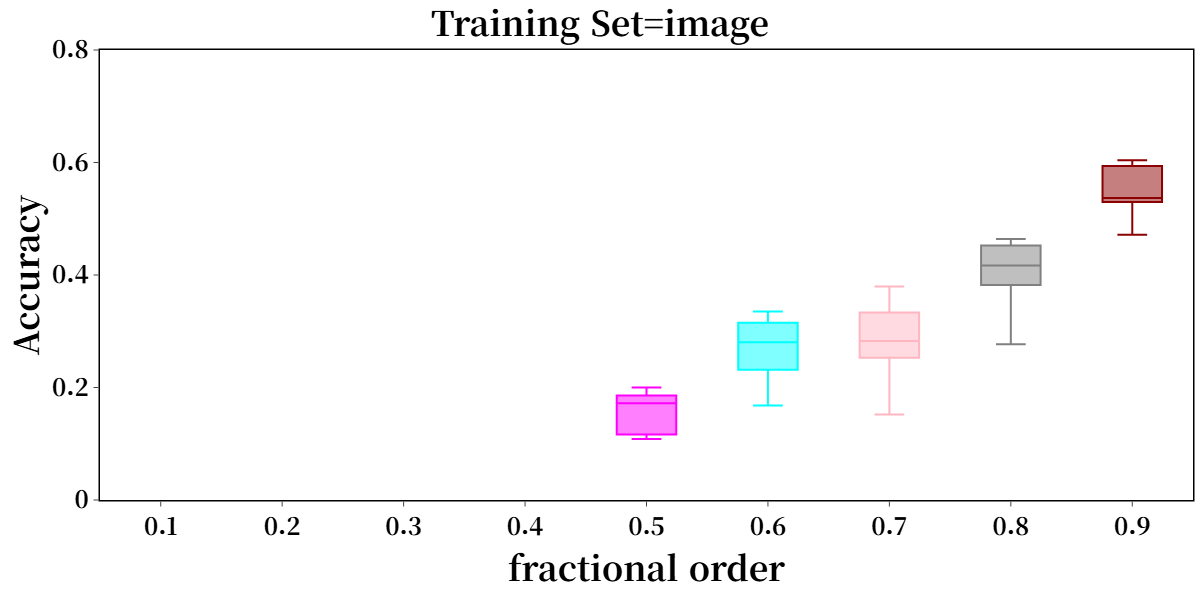}}
    
    \subfloat[SOE]{%
    \includegraphics[width=0.45\textwidth]{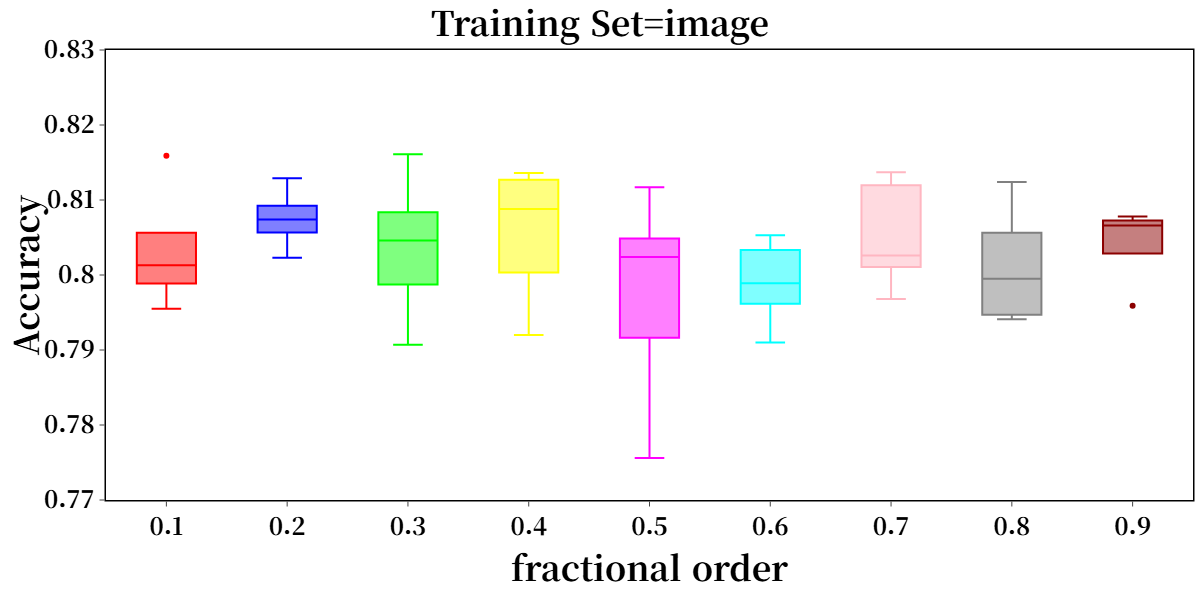}}
    \caption{Boxplot distributions of final training accuracy and training loss across fractional orders ranging from 0.1 to 0.9, evaluated over multiple independent runs, illustrating the sensitivity of convergence behavior to the choice of fractional order and the variability across repeated experiments.}
    \label{fig:fractional_order_box}
\end{figure}

Figure \ref{fig:fractional_order_box} presents box plots generated by repeating each FGD method on their respective tasks at least five times with different random seeds. The first row shows the boxplots of the final loss values for the battery task. The second row corresponds to the signal task, and the third row presents the results for the image task.

In the image task, the Taylor method yields no valid results for fractional orders ranging from 0.1 to 0.4, as the loss diverges during training, resulting in an accuracy of 0.1, which corresponds to the theoretical lower bound for the ten-class CIFAR-10 dataset.

\section{Computational Complexity and Execution Times for Real-World Problems} \label{Appendix:timecom}

In this appendix, we empirically validate the computational complexity of the discussed FGD methods. Let $N$ denote the total number of epochs and $M$ represent the number of SOE terms. As detailed in Section~\ref{sec:SOE_and_DHDC}, the theoretical complexities for DC, SOE, and DHDC are $O(N^2)$, $O(MN)$, and $O(N\log N)$, respectively. To verify these theoretical properties, we evaluate the alignment between experimental measurements and their corresponding theoretical predictions with MSEloss. To calculate the MSEloss, We can obtain a mathematical basis of time complexity.

Define the experimental measurement at i epoch to $y_{ex}^i$ and define the theoretical prediction at i epoch to $y_{pred}^i$ 
\begin{equation}
    MSEloss = \frac{1}{N-min}\sum^N_{i=min}(y_{ex}^i - y_{pred}^i)^2
\end{equation}
initial point of epoch is defined with $min$.

Graph legends $f(x) = y$ means $O(f(x))$ when $y_{ex}^i$ and $y_{pred}^i$ has acceptable MSEloss. For $y_{ex}^i$ is satisfied this condition. $y_{ex}^i$ is normalized with values of $y_{pred}^i$. 

$x=y$ normalized reference in fig \ref{fig:OSOE1}, \ref{fig:OSOE2} is defined at \eqref{eq:n}.

$x^2 = y$ normalized reference in fig \ref{fig:ODC} and $xlog(x) = y$ normalized reference in fig \ref{fig:ODHDC} are defined at \eqref{eq:n2} and \eqref{eq:nlog} each. max(y) and min(y) means the maximum and minimum value for whole epoch. $i$ means epoch used at graphs horizontal axis.

\begin{equation}
\label{eq:n}
    y_{ex}^i = min(y_{pred}) + x_{norm}^i \times(max(y_{pred}) - min(y_{pred})) \quad x_{norm}^i = \frac{(i - min)}{N-min}
\end{equation}

\begin{equation}
\label{eq:n2}
    y_{ex}^i = ratio \times i ^2 \quad ratio = \frac{min(y_{pred})}{min^2}
\end{equation}

\begin{equation}
\label{eq:nlog}
    y_{ex}^i = ratio \times ilog_2(i) \quad ratio = \frac{min(y_{pred})}{min \times log_2{(min)}}
\end{equation}

Whole time complexity graphs are using same task. Used task is Rosenbrock function which defined at \eqref{eq:rosen}. Use the same initial condition at \eqref{ini}.

\begin{figure}[H]
    \centering
    \includegraphics[width=0.45\linewidth]{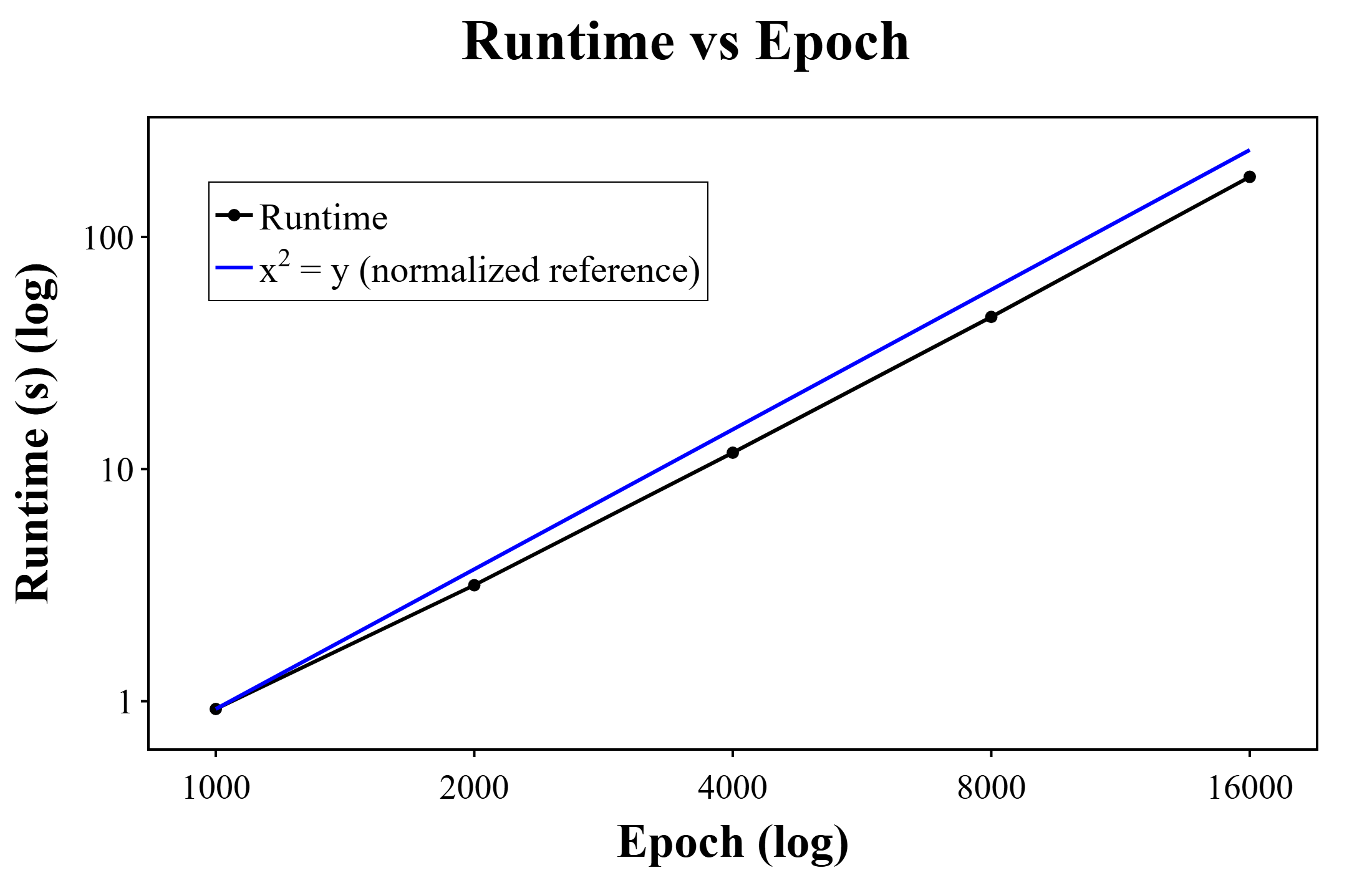}
    \caption{Total elapsed time per epoch for the DC method with log-log plot. The blue curve represents the trajectory that would result if the method followed the indicated complexity.}
    \label{fig:ODC}
\end{figure}



Figure~\ref{fig:ODC} presents the elapsed time of the DC method as a function of $N$, compared against a theoretical quadratic curve. The discrepancy between the empirical measurements and the $O(N^2)$ model, expressed as an MSE loss, is $133$. While this value may seem high, it is primarily due to the large scale of the $y$-axis. Furthermore, a log--log regression analysis yields a scaling of $O(N^{1.941})$, confirming that the empirical time complexity of DC follows $O(N^2)$.

\begin{figure}[H]
    \centering
    \includegraphics[width=0.45\linewidth]{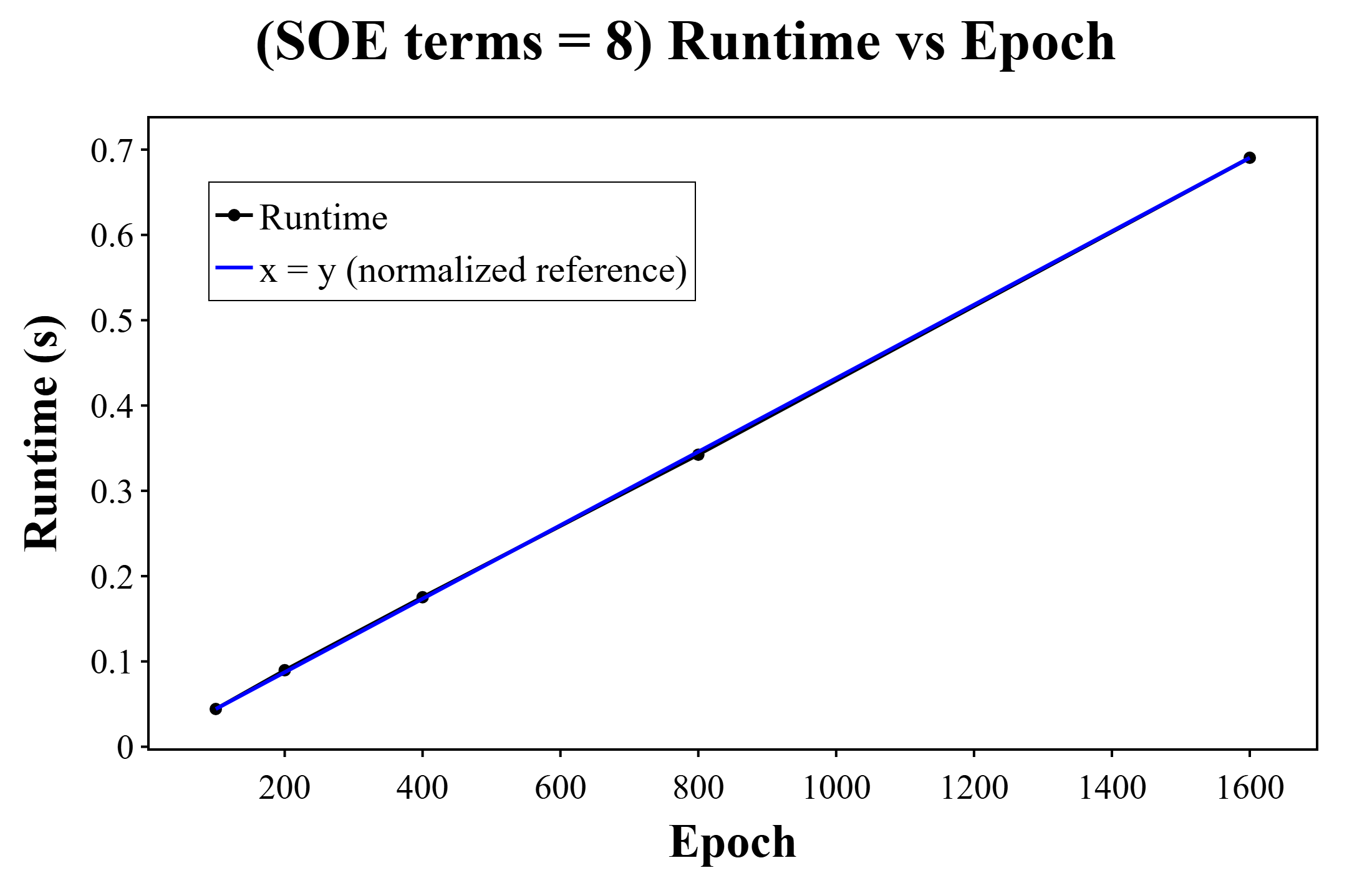}
    \caption{Total elapsed time per epoch for the SOE versus . The blue curve represents the trajectory that would result if the method followed the indicated complexity. }
    \label{fig:OSOE1}
\end{figure}
Figure~\ref{fig:OSOE1} shows the elapsed time versus $N$ when the number of SOE terms is fixed 8. The discrepancy between the theoretical and measured values, expressed as MSE loss, is $1.42\times10^{-7}$. This indicates runtime is in direct proportion to epochs named N.

\begin{figure}[H]
    \centering
    \includegraphics[width=0.45\linewidth]{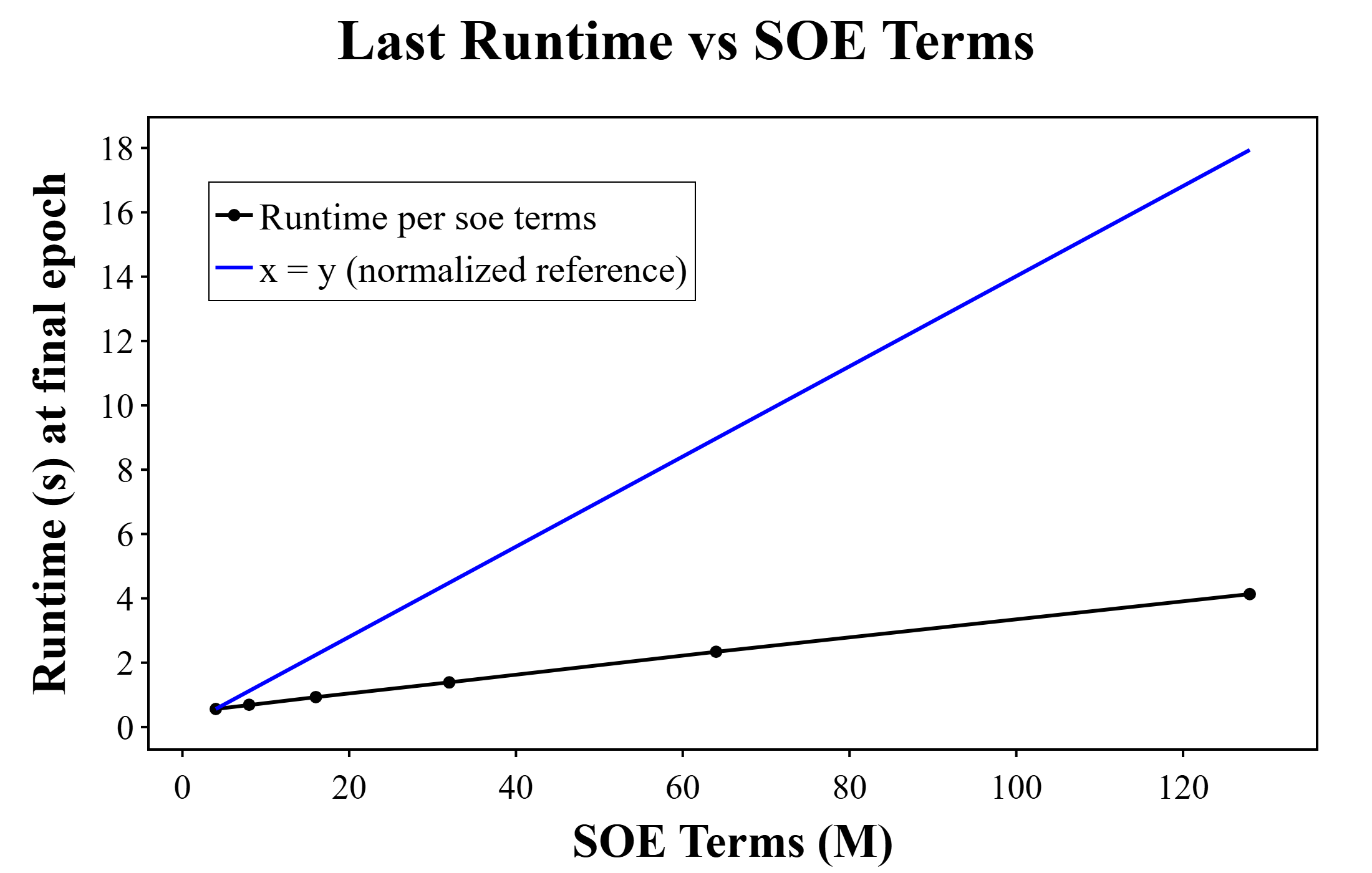}
    \caption{Total elapsed time for the SOE versus SOE terms. The blue curve represents the trajectory that would result if the method followed the indicated complexity.}
    \label{fig:OSOE2}
\end{figure}
Figure~\ref{fig:OSOE2} shows the elapsed time versus $M$. The discrepancy between the theoretical and measured values, expressed as MSE loss, is $9.62$. This indicates runtime is in direct proportion to SOE Terms.

From \ref{fig:OSOE1}, \ref{fig:OSOE2}, SOE exhibits a linear increase in cost with respect to both $M$ and $N$, which indicates that the time complexity of SOE is $O(MN)$. 

\begin{figure}[H]
    \centering
    \includegraphics[width=0.45\linewidth]{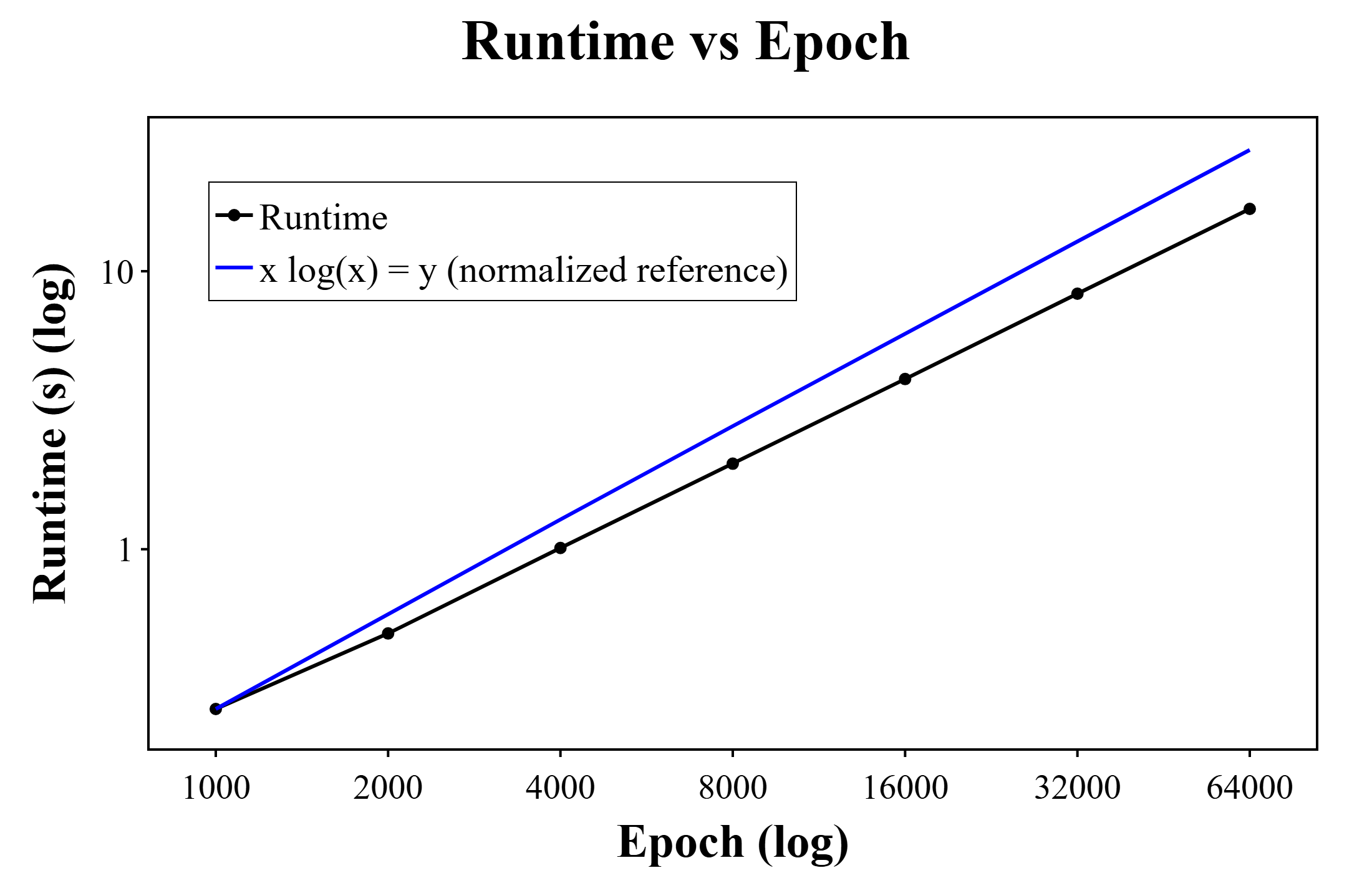}
    \caption{Total elapsed time per epoch for the DHDC method with log-log plot. The blue curve represents the trajectory that would result if the method followed the indicated complexity.}
    \label{fig:ODHDC}
\end{figure}

Figure~\ref{fig:ODHDC} is the log-log plot of cpu time at each epochs. The discrepancy between the theoretical and measured values, expressed as MSEloss, is $3.57$. MSEloss is acceptably low and Runtime graph is under the normalized reference. Big-O time complexity is meaning the maximum value when normalized. This indicates that the computational complexity of DHDC is $O(N\log N)$. 

\begin{table}[H]
    \centering
    \resizebox{\linewidth}{!}{
    \begin{tabular}{lcccccccccc}
    \toprule
    \multirow{2}{*}{FGD} & \multicolumn{3}{c}{DC} & \multicolumn{3}{c}{DHDC} & \multicolumn{3}{c}{SOE} \\
    \cmidrule(lr){3-5} \cmidrule(lr){6-8} \cmidrule(lr){9-11} 
    \multirow{2}{*}{Task} & & battery  & signal   & image    & battery  & signal   & image    & battery  & signal   & image    \\
                          & & times(s) & times(s) & times(s) & times(s) & times(s) & times(s) & times(s) & times(s) & times(s) \\
    \midrule
    CPU times&   & 116 & 26109 & - &  88 & 5349 & 1253 & 89& 5164 & 1314 \\
    \bottomrule
    \end{tabular}%
    }
    \caption{Computational Efficiency Comparison of DC and fast method of DC-DHDC and SOE-}
    \label{tab:CPU_time}
\end{table}

Table~\ref{tab:CPU_time} compares the computational efficiency of DC, DHDC, and SOE across the three real-world tasks. Here, CPU time is defined as the total duration required to complete the full training cycles (epochs and iterations), excluding the initial code setup.

The results clearly demonstrate the superior efficiency of the proposed fast algorithms; both DHDC and SOE provide significant speed improvements over the standard DC method. Notably, the signal task exhibits the longest execution times due to its large number of epochs, even exceeding the CPU time of the image task which employs the more architecturally complex ResNet-18.

The entry for DC in the image task is left blank (indicated by a dash) because its execution is deemed computationally intractable for practical evaluation. As the resource requirements for DC increase substantially with the number of iterations, its expected computation time is instead estimated via regression analysis, with further details provided in Appendix~\ref{Appendix: C}.

\begin{table}[H]
    \centering
    \begin{tabular}{lcccc}
        \toprule
        \cmidrule(lr){2-5}
         & FGD method & DC & DHDC & SOE  \\ 
        \midrule
         & Vram(mb) & >32000  & 6962 & 5529 \\ 
        \bottomrule
    \end{tabular}
    \caption{Memory Efficiency and Practical Feasibility}
    \label{tab:optimizers}
\end{table}

Table~\ref{tab:optimizers} summarizes the peak GPU memory (VRAM) consumption for the CIFAR-10 image task. Since this task exhibits the most substantial increase in resource usage among the evaluated scenarios, it was selected as the benchmark for measuring maximum VRAM usage with the fractional order set to 0.5. As shown in the table, the baseline DC method exceeds 32,000 MB, surpassing typical hardware capacities for this task. In contrast, the proposed SOE and DHDC approximations significantly reduce the memory footprint to 5,529 MB and 6,962 MB, respectively, demonstrating their superior scalability for large-scale optimization.

\subsection{Computational Cost Estimation for Discrete Convolution in Image Tasks \ref{Appendix:timecom}}\label{Appendix: C}

The standard FGD using the Caputo fractional derivative exhibits an $O(n^2)$ computational complexity. As the number of iterations $n$ increases, the non-local dependency of the fractional operator leads to a significant escalation in the computational load per step.

To quantify this overhead, a linear regression analysis was conducted based on the initial training phase. Observations indicated that the execution time per iteration increases linearly with the iteration index $i$. For a total of $4,875$ iterations, the cumulative computation time $S$ is modeled as follows:
\begin{equation}
    S = \sum_{i=0}^{4874} (1.08415 i + 16.9310).
\end{equation}
The regression analysis yielded a slope of $1.08415$ and an intercept of $16.9310$, with a standard error of $0.0274$, confirming the reliability of the estimation.

In a configuration with a batch size of 256, these 4,875 iterations would require an estimated 12,962,647 seconds. However, due to GPU VRAM capacity constraints, the actual batch size for the standard FGD must be set to 1 to accommodate the memory requirements. Under this constraint, the total number of iterations per epoch increases to 49,920. Applying the regression model to this scenario results in a projected computation time exceeding 26,772 years. This result demonstrates that standard FGD is computationally infeasible for this optimization task, necessitating the implementation of the SOE and DHDC schemes proposed in this work.

\section{Algorithm}\label{Appendix:Algorithm}
This section presents the pseudo-codes for the methodologies introduced in the preceding sections to provide a clear description of their algorithmic implementation.

\begin{algorithm}[H]
    \caption{SOE memory update at step $n$ (time $t_n=n\Delta t$)}
    \label{alg:soe-update}
    \begin{algorithmic}[1]
        \Require gradients $\{\bm{g}_j\}_{j\ge 0}$ streamed online; SOE parameters $\{(\omega_m,\xi_m)\}_{m=1}^M$ with $\omega_m>0$, $\xi_m>0$; step size $\Delta t>0$
        \Statex \textbf{State:} auxiliary modes $\{\bm{R}_m\}_{m=1}^M$ (each in $\mathbb{R}^d$), initialized by $\bm{R}_m \gets \bm{0}$
        \Statex \textbf{Output:} fractional (history-aware) gradient $\widehat{\bm{g}}_n$
        
        \Statex \textbf{Precompute once:} $\rho_m = \exp(-\xi_m \Delta t)$ for $m=1,\dots,M$
        \State \textbf{Input at step $n$:} current gradient $\bm{g}_n=\nabla L(\bm{\theta}_n)$
        
        \For{$m=1$ \textbf{to} $M$}
            \State $\bm{R}_m \gets \rho_m\,\bm{R}_m + \bm{g}_n$
        \EndFor
        
        \State $\widehat{\bm{g}}_n \gets \bm{0}$
        \For{$m=1$ \textbf{to} $M$}
            \State $\widehat{\bm{g}}_n \gets \widehat{\bm{g}}_n + \omega_m\,\bm{R}_m$
        \EndFor
        
        \Statex \textbf{Note:} The update includes the current gradient $\bm{g}_n$, 
        so that $\widehat{\bm{g}}_n$ corresponds to a convolution of the form 
        $\sum_{j=0}^{n} w_{n-j}\bm{g}_j$.
        
        \State \Return $\widehat{\bm{g}}_n$
    \end{algorithmic}
\end{algorithm}

Algorithm \ref{alg:soe-update} is the pseudocode of the SOE introduced in section \ref{subsec:SOE}

\begin{algorithm}[H]
\caption{DHDC update at step $n$ (time $t_n=n\Delta t$)}
\label{alg:DHDC-update}
\begin{algorithmic}[1]
\State \textbf{Input:} gradient $\bm{g}_n$, step size $\Delta t$, order $\alpha$, tolerance $\varepsilon_t$
\State \textbf{State:} bins $\{(\bm{S}_b^{(n)}, C_b^{(n)})\}_{b=0}^{B-1}$
\State \textbf{Insert (youngest bin):} $\bm{S}_0 \gets \bm{S}_0 + \bm{g}_n$, \quad $C_0 \gets C_0 + 1$

\For{$b=0$ \textbf{to} $B-2$}
  \State $\mathrm{cap} \gets 2^b$
  \If{$C_b > \mathrm{cap}$}
     \State $\mathrm{half} \gets \max(1,\lfloor C_b/2 \rfloor)$,\quad $\varphi \gets \mathrm{half}/C_b$
     \State $\bm{M} \gets \varphi\,\bm{S}_b$
     \State $\bm{S}_b \gets \bm{S}_b - \bm{M}$,\quad $C_b \gets C_b - \mathrm{half}$
     \State $\bm{S}_{b+1} \gets \bm{S}_{b+1} + \bm{M}$,\quad $C_{b+1} \gets C_{b+1} + \mathrm{half}$
  \EndIf
\EndFor

\State $\widehat{\bm{g}}_n \gets \bm{0}$
\For{$b=0$ \textbf{to} $B-1$}
  \If{$C_b = 0$}
     \State \textbf{continue}
  \Else
    \State $t_{\min} \gets \max\!\big(\varepsilon_t,\; t_n - 2^{b+1}\Delta t\big)$
    \State $t_{\max} \gets \max\!\big(\varepsilon_t,\; t_n - 2^{b}\Delta t\big)$
    \State $\delta \gets t_{\max}-t_{\min}$

    \If{$\delta \le \varepsilon_t$}
       \State $t_{\mathrm{mid}} \gets \max\!\big(\varepsilon_t,\tfrac12(t_{\min}+t_{\max})\big)$
       \State $\overline{k}_b \gets t_{\mathrm{mid}}^{-\alpha}/\Gamma(1-\alpha)$
    \ElsIf{$|1-\alpha| > 10^{-8}$}
       \State $\overline{k}_b \gets \dfrac{t_{\max}^{1-\alpha}-t_{\min}^{1-\alpha}}{(1-\alpha)\,\delta\,\Gamma(1-\alpha)}$
    \Else
       \State $\overline{k}_b \gets \dfrac{\log\!\big(t_{\max}/t_{\min}\big)}{\delta\,\Gamma(1-\alpha)}$
    \EndIf

    \State $\widehat{\bm{g}}_n \gets \widehat{\bm{g}}_n + \overline{k}_b\,\bm{S}_b$
  \EndIf
\EndFor
\State \textbf{Return} $\widehat{\bm{g}}_n$
\end{algorithmic}
\end{algorithm}

Algorithm \ref{alg:DHDC-update} is the pseudocode of the DHDC introduced in section \ref{subsec:DHDC}

\end{document}